\newtheorem{theorem}{Theorem}[section]
\newtheorem{proposition}[theorem]{Proposition}
\newtheorem{lemma}[theorem]{Lemma}
\theoremstyle{definition}
\newtheorem{definition}[theorem]{Definition}
\theoremstyle{remark}
\newtheorem{remark}{Remark}[section]
\theoremstyle{remark}
\theoremstyle{remark}
\newtheorem{claim}{Claim}[section]
\theoremstyle{remark}
\theoremstyle{remark}
\theoremstyle{remark}
\theoremstyle{remark}
\numberwithin{equation}{subsection}
\begin{document}

\title{Quantitative stratification of stationary connections}
\date{\today}
\author{Yu Wang}
\address{Department of Mathematics, Northwestern University, Evanston, Il 60208, USA}
\email{yuwang2018@u.northwestern.edu}

\maketitle
\begin{abstract}
Let $A$ be a connection of a principal bundle $P$ over a Riemannian manifold $M$, such that its curvature $F_A\in L_{\text{loc}}^2(M)$ satisfies the stationarity equation. It is a consequence of the stationarity that $\theta_A(x,r)=e^{cr^2}r^{4-n}\int_{B_r(x)}|F_A|^2$ is monotonically increasing in $r$, for some $c$ depending only on the local geometry of $M$. We are interested in the singular set defined by $S(A)=\{x: \lim_{r\to 0}\theta_A(x,r)\neq 0\}$, and its stratification $S^k(A)=\{x: \text{no tangent measure at $x$ is $(k+1)$-symmetric}\}$. We then introduce and study the quantitative stratification $S^k_{\epsilon}(A)$. Roughly speaking, $S^k_{\epsilon}(A)$ consists of points at which no tangent measure of $A$ is $\epsilon$-close to being $(k+1)$-symmetric. In the main Theorem, we show that $S^k_{\epsilon}$ is $k$-rectifiable and satisfies the Minkowski volume estimate $\text{Vol}(B_r(S^k_{\epsilon})\cap B_1)\le Cr^{n-k}$. Lastly, we apply the main theorems to the stationary Yang-Mills connections to obtain a rectifiability theorem that extends some previously known results in \cite{T00}.
\end{abstract}

\tableofcontents

%\addtocontents{toc}{\protect\setcounter{tocdepth}{1}}

\section{Introduction}\label{introsec}
Let $P$ be a principal bundle with compact Lie group fiber $G$ over an $n$-dimensional ($n\ge4$) Riemannian manifold $M$. 
Consider a connection $A\in L^2_{\text{loc}}(M,\mathfrak{g}_P)$ on $P$. Define the curvature of $A$ to be $F_A=dA+\frac{1}{2}[A\wedge A]$ in the distribution sense. Further assume that $F_A\in L_{\text{loc}}^2(M,\mathfrak{g}_P\otimes\Lambda^2M)$. The most important hypothesis in this paper will be the stationarity, defined as follows:
\begin{definition}
\label{sta0}
Let $A$ be a connection satisfying the preceding conditions. We say that $A$ is a stationary connection, if
\begin{equation}
\label{sta}
\int_{M} \bigg{(}|F_A|^2\text{div}X-4\sum_{i,j=1}^n \langle F_A(\nabla_{e_i}X,e_j),F_A(e_i,e_j)\rangle \bigg{)} dV_g=0
\end{equation}
for all smooth vector fields $X$ which are compactly supported in $M$. Here $\{e_i\}_i$ denotes an arbitrary fixed orthonormal frame on $M$.
\end{definition}
While all the main results in this paper hold true and are stated for general Riemannian manifolds, since the results themselves are local, the base manifold geometry is unessential and mainly contributes to unnecessary technicalities. In order to better focus on addressing the key issues and carrying out the main ideas in a clean manner, we may assume that $A$ is a stationary connection of a Principal $G$-bundle defined on $B_{16}(p)\subseteq \mathbb{R}^n$ (often equipped with the Euclidean metric) and satisfies $\int_{B_{16}(p)}|F_A|^2 dV_g\le\Lambda$. Under this simplification, let us adopt the following notation:
\begin{equation}
\theta_A(x,r)=r^{4-n}\int_{B_r(x)}|F_A(x)|^2dV_g.
\end{equation}
It follows from \cite{P83} that the stationarity equation \eqref{sta} yields the following monotonicity formula:
\begin{eqnarray}
\begin{split}
\label{mono}
\theta_A(x,\rho)-\theta_A(x,\sigma)&=\rho^{4-n}\int_{B_\rho(p)}|F_A|^2dV_g-\sigma^{4-n}\int_{B_\sigma(p)}|F_A|^2dV_g\\
&=\int_{A_{\sigma,\rho}(0)}4|x-p|^{4-n}|\iota_{\partial r_p}F_A|^2dV_g.
\end{split}
\end{eqnarray}
Here $A_{\sigma,\rho}(p)=B_{\rho}(p)\backslash B_{\sigma}(p)$, and $\iota_v F=\langle v,F\rangle$. More generally, by \cite{T00}, given any function $\phi(\theta)$ on the unit sphere in $\mathbb{R}^n$ and $\psi(x)=\phi(\frac{x-p}{|x-p|})$, we have:
\begin{eqnarray}
\begin{split}
\label{psisph}
&\rho^{4-n}\int_{B_\rho(p)}\psi|F_A|^2dV_g-\sigma^{4-n}           \int_{B_\sigma(p)}\psi|F_A|^2dV_g\\
=&\int_{A_{\sigma,\rho}(p)}4|x-p|^{4-n}\psi|\iota_{\partial r_p}F_A|^2dV_g-\int_{\sigma}^{\rho}4\tau^{3-n}\big{(}\int_{B_{\tau}(p)}|x-p|\langle \iota_{\partial r_p}F_A,\iota_{\nabla\psi}F_A \rangle  dV_g\big{)} d\tau.
\end{split}
\end{eqnarray}

We now define
\begin{eqnarray}
\begin{split}
S(A)=\big{\{}x: \lim_{r\to 0}\theta_A(x,r)\neq 0\big{\}}.
\end{split}
\end{eqnarray}
Fix any $x_*\in S(A)$, and any positive real number sequence $r_i\to 0$, one could consider the rescaled sequence given by $A_i(x)=r_i^{-1}A(r_i(x-x_*)+x_*)$. Due to the monotonicity formula \eqref{mono}, $|F_{A_i}|^2dV$ weak-$*$ subconverges as measures to some Radon measure $\mu$ of $\mathbb{R}^n$, the tangent plane at $x_*$. Here, $\mu$ is called a tangent measure of $A$ at $x_*$. It is also easily seen from the monotonicity formula \eqref{mono} that $r^{4-n}\mu(B_r(0))$ is nonzero, finite, and constant in $r$. For its proof, we refer the readers to Lemma 3.2.1 and Lemma 5.3.1, \cite{T00}. Now we present the following:
\begin{definition}
\label{symdef1}
Let $\mu$ be a Radon measure of $\mathbb{R}^n$ satisfying $r^{4-n}\mu(B_r(0))\equiv C\in [0,\infty)$. For any $0\le k\le n$, $\mu$ is said to be $k$-symmetric if there exists a $k$-dimensional subspace of $\mathbb{R}^n$ denoted by $V_k$, such that $T_v^* \mu=\mu$ for all $v\in V_k$. Here $T_v$ denotes the translation map $\mathbb{R}^n\to \mathbb{R}^n, x\mapsto x+v$, and $T_v^*$ denotes the pull back operator under $T_v$.
\end{definition}
\begin{remark}
By preceding discussions, every tangent measure of $A$ is $0$-symmetric.
\end{remark}
\begin{definition}\label{classsym}
For $k=0,\cdots, n-1$, define $S^k(A)=\{x: \text{no tangent measure at $x$ is $(k+1)$-symmetric}\}.$
\end{definition}
This is the classical stratification of $S(A)$. Using the standard Federer dimension reduction principle it is not hard to show that $\text{dim}(S^k(A))\le k$ (here and throughout the paper, ``dim'' denotes the Hausdorff dimension). However, little was known about the rectifiability of $S^k(A)$. In this paper, one of our main results is the $k$-rectifiability of $S^k(A)$. This requires studying the quantitative stratification $S^k_{\epsilon}(A)$ and $S^k_{\epsilon,r}(A)$. Roughly speaking, $S^k_{\epsilon}(A)$ is the set of points at which no tangent measure is ``$\epsilon$-close to being $(k+1)$-symmetric'', and heuristically $S^k_{\epsilon,r}(A)$ could be thought of as the $r$-tubular neighborhood of $S^k_{\epsilon}(A)$. Our main results also include the Minkowski volume estimates $\mbox{Vol}(B_r(S_{\epsilon}^{k}(A))\cap B_1(p))\le C(n,\Lambda,\epsilon)r^{n-k}, \mbox{Vol}(B_r(S_{\epsilon}^{k,r}(A))\cap B_1(p))\le C(n,\Lambda,\epsilon)r^{n-k}$. To introduce the definitions of quantitative stratification requires the notion of quantitative symmetry.

\subsection{Quantitative symmetry}
Let us begin by defining ``effectively span'':
\begin{definition}[Effectively span]
\label{effspa}
For fixed $\tau>0$, we say that a $k$-plane $V_k$ is $\tau r$-effectively spanned by $k+1$ points $x_0,\cdots,x_k$ with respect to $B_r(x)$, if $x_0,\cdots,x_k\in B_{r/2}(x)$ and $\text{dist}(x_{i+1},x_0+\text{span}\{x_1-x_0,\cdots,x_i-x_0\})\ge \tau r\ \mbox{for}\ i=0,\cdots,k-1$, where $\text{dist}(\cdot,\cdot)$ stands for the Euclidean distance.
\end{definition}
\begin{remark}
From now on, let us fix $\tau\equiv\tau(n)$ for some constant $\tau(n)>0$. The explicit choice of $\tau(n)$ will be specified in Section \ref{cutoff}.
\end{remark}
\begin{remark}
The notion of effectively span also appears in \cite{L99}, \cite{T00}, and \cite{NV17}.
\end{remark}
In addition, we need the following notion of ``almost cone tip'':
\begin{definition}[Almost cone tip]
\label{conetip}
Fix arbitrary $B_{2r}(x_0)\subseteq B_{16}(p)$.\\
(1) $x_0$ is called an $(\epsilon, r)$-cone tip of $A$, if $|\theta_{A}(x_0,2r)-\theta_{A}(x_0,2\epsilon r)|\le\epsilon.$\\
(2) $x_0$ is called a strict $r$-cone tip of $A$, if $\theta_A(x_0,s)$ is constant in $s$, for $0<s\le 2r$.
\end{definition}
\begin{remark}\label{iotaremark}
By \eqref{mono}, $x$ is an $r$-cone tip of $A$ if and only if
\begin{equation}\label{radiota}
\iota_{\partial r_x}F_A(y)=0,\ \text{for all }y\in B_r(x).
\end{equation} 
Moreover, if $A$ is smooth, \eqref{radiota} implies that $A$ is gauge equivalent to a radially invariant connection in $B_r(x)$. 
\end{remark}
Now we can present the definition of quantitative symmetry:
\begin{definition}[Quantitative symmetry]
\label{keydef}
Fix arbitrary $B_{2r}(x)\subseteq B_{16}(p)$.\\
(1) $A$ is said to be $k$-symmetric in $B_r(x)$, if there is a $k$-plane $V_k$ with $V_k\cap B_{r/10}(x)\neq \emptyset$, such that every $y\in V_k\cap B_r(x)$ is a strict $r$-cone tip of $A$.\\
(2) $A$ is said to be $(k,\epsilon)$-symmetric in $B_r(x)$, if there is a $k$-plane $V_k$ with $V_k\cap B_{r/10}(x)\neq \emptyset$, such that $V_k$ is $\tau(n) r$-effectively spanned by $x_0,x_1,\cdots,\ x_k$ with respect to $B_r(x)$, and that $x_i$ is an $(\epsilon, r)$-cone tip of $A$, for all $i=0,\cdots, k$. 
\end{definition}
\begin{remark}
\label{keyremark}
By \eqref{mono}, $A$ is $k$-symmetric in $B_r(x)$ if and only if:
\begin{eqnarray}
\begin{split}
\label{ksymmequiv}
&\text{There exists a $k$-dimensional plane $V_k$ (with $V_k\cap B_{r/10}(x)\neq \emptyset$) spanned by orthonormal vector fields }\\
&\text{$\nu_1,\cdots,\nu_k$ such that $\iota_{\nu_i}F_A(z)\equiv0,\ \iota_{\partial r_{x}}F_A(z)\equiv0,$ for all $z\in B_r(x)$}.
\end{split}
\end{eqnarray}
Indeed, let us choose $x_1\cdots,x_k\in \partial B_{r/2}(x)\cap V_k$ such that $\nu_i=\nu_i=\frac{x_i-x}{|x_i-x|}$ for each $i=1,\cdots,k$. Then the above equivalence follows from the monotonicity formula \eqref{mono} and the following elementary identity:
\begin{equation}
\label{virtue}
d(x,z)\partial r_x-d(x_0,x_i)\nu_i=d(x_i,z)\partial r_{x_i}, \text{for all }z\in B_r(x).
\end{equation}
Moreover, if $A$ is smooth, \eqref{ksymmequiv} implies that $A$ is gauge equivalent to a connection which only depends on $n-k$ variables and is radial invariant in $B_r(x)$. That is to say, up to a smooth gauge transformation, $A$ could be identified with a connection defined on the sphere $S^{n-k-1}$.
\end{remark}
Next, consider a sequence of stationary connections $\{A_i\}_i$ with $\int_{B_{16}(p)}|F_{A_i}|^2dV\le \Lambda$. As Radon measures, $|F_{A_i}|^2dV$ subconverges to some Radon measure $\mu$ in the weak-$*$ sense. Define $\theta_{\mu}(x,r)=r^{4-n}\mu(B_r(x))$. Then by replicating Definition \ref{conetip} and Definition \ref{keydef}, we have:
\begin{definition}[Almost cone tip]
\label{conetipmu}
Let $\mu$ be a Radon measure obtained as in the preceding paragraph. Fix arbitrary $B_{2r}(x_0)\subseteq B_{16}(p)$. \\
(1) $x_0$ is called an $(\epsilon, r)$-cone tip of $\mu$, if $|\theta_{\mu}(x_0,2r)-\theta_{\mu}(x_0,2\epsilon r)|\le\epsilon.$\\
(2) $x_0$ is called a strict $r$-cone tip of $\mu$, if $\theta_\mu(x_0,s)$ is constant in $s$, for $0<s\le 2r$.
\end{definition}
\begin{definition}[Quantitative symmetry]
\label{keydefmu}
Let $\mu$ be a Radon measure in Definition \ref{conetipmu}. Fix arbitrary $B_{2r}(x)\subseteq B_{16}(p)$.\\
(1) $\mu$ is said to be $k$-symmetric in $B_r(x)$, if there is a $k$-plane $V_k$ with $V_k\cap B_{r/10}(x)\neq \emptyset$, such that every $y\in V_k\cap B_r(x)$ is a strict $r$-cone tip of $\mu$.\\
(2) $\mu$ is said to be $(k,\epsilon)$-symmetric in $B_r(x)$, if there is a $k$-plane $V_k$ with $V_k\cap B_{r/10}(x)\neq \emptyset$, such that $V_k$ is $\tau(n) r$-effectively spanned by $x_0,x_1,\cdots,\ x_k$ with respect to $B_r(x)$, and that $x_i$ is an $(\epsilon, r)$-cone tip of $\mu$, for all $i=0,\cdots, k$. 
\end{definition}
For tangent measures of stationary connections, (1) of Definition \ref{keydefmu} is equivalent to Definition \ref{symdef1}. More precisely, we have the following claim:
\begin{claim}\label{introproof}
Let $\mu$ be a tangent measure of a stationary connection $A$ at point $p$. Then $\mu$ is $k$-symmetric in $B_1(0)\subseteq T_pM$ in the sense of Definition \ref{keydefmu}, if and only if $\mu$ is $k$-symmetric in the sense of Definition \ref{symdef1}.
\end{claim}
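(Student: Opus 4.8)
The plan is to prove the two implications separately, using as the only nontrivial input the rigidity (equality) case of the monotonicity formula \eqref{mono} for the limit measure $\mu$. Recall that, since $\mu$ is a tangent measure of $A$, it is obtained as a weak-$*$ limit of curvature measures $|F_{A_i}|^2dV$ of a blow-up sequence $A_i(x)=r_i^{-1}A(r_i(x-p)+p)$, that $\theta_\mu(0,s)\equiv C$, and (passing the equality case of \eqref{mono} to this limit, cf. Remark \ref{iotaremark} and \cite{T00}) that $\mu$ is invariant under all dilations centered at $0$.

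\emph{From Definition \ref{symdef1} to Definition \ref{keydefmu}(1).} This direction is soft. Assume $T_v^*\mu=\mu$ for every $v$ in the linear $k$-plane $V_k$. For $y\in V_k$ one has $B_s(y)=T_y(B_s(0))$, hence $\mu(B_s(y))=(T_y^*\mu)(B_s(0))=\mu(B_s(0))$, so $\theta_\mu(y,s)=\theta_\mu(0,s)\equiv C$ for all $s>0$. In particular every $y\in V_k\cap B_1(0)$ is a strict $1$-cone tip of $\mu$, and since $0\in V_k\cap B_{1/10}(0)$, the plane $V_k$ exhibits $\mu$ as $k$-symmetric in $B_1(0)$ in the sense of Definition \ref{keydefmu}(1).

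\emph{From Definition \ref{keydefmu}(1) to Definition \ref{symdef1}.} Let $V_k$ be a $k$-plane with $V_k\cap B_{1/10}(0)\ne\emptyset$ all of whose points in $B_1(0)$ are strict $1$-cone tips of $\mu$. Fix $y_0\in V_k\cap B_{1/10}(0)$; since $V_k\cap B_1(0)\supseteq V_k\cap B_{9/10}(y_0)$ is a $k$-dimensional ball, I would pick $y_1,\dots,y_k\in V_k\cap B_1(0)$ so that $y_0,\dots,y_k$ affinely span $V_k$. Each $y_i$ is a strict $1$-cone tip, i.e. $\theta_\mu(y_i,\cdot)$ is constant on $(0,2]$, so the equality case of \eqref{mono} localized at $y_i$ (again passed to the limit via \cite{T00}) shows that $\mu$ restricted to $B_2(y_i)$ is invariant under dilations centered at $y_i$. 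As $|y_i|<1$ we have $B_1(0)\subseteq B_2(y_i)$, so on $B_1(0)$ the measure $\mu$ is dilation-invariant about $0$ and about $y_i$ simultaneously. Writing $\delta^z_\lambda$ for the dilation $x\mapsto\lambda(x-z)+z$, the identity $\delta^{y_i}_\lambda\circ(\delta^0_\lambda)^{-1}=T_{(1-\lambda)y_i}$ then forces $\mu$ to be translation-invariant along the line $\mathbb{R}y_i$ on $B_1(0)$ (for $\lambda$ near $1$, then iterated on suitable sub-balls). The \emph{global} dilation-invariance of $\mu$ about $0$ upgrades this to invariance on all of $\mathbb{R}^n$: if $\mu(E+ty_i)=\mu(E)$ for $E$ compactly contained in $B_1(0)$, then for $E'=\lambda E$ one gets $\mu(E'+\lambda ty_i)=\mu(\lambda(E+ty_i))=\lambda^{n-4}\mu(E+ty_i)=\lambda^{n-4}\mu(E)=\mu(E')$. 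Hence $T_v^*\mu=\mu$ for every $v\in\mathbb{R}y_i$, $i=0,\dots,k$; since $\{v:T_v^*\mu=\mu\}$ is an additive subgroup of $\mathbb{R}^n$ containing the lines $\mathbb{R}y_0,\dots,\mathbb{R}y_k$, it contains their linear span $\mathrm{span}\{y_0,\dots,y_k\}$, which has dimension $\ge k$ because $y_1-y_0,\dots,y_k-y_0$ are independent. Any $k$-dimensional subspace inside it serves as the $V_k$ of Definition \ref{symdef1}.

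\emph{Main obstacle.} The delicate point is the input that I have used twice, namely that the rigidity case of \eqref{mono} is available for the weak-$*$ limit measure $\mu$ and not only for smooth stationary connections, and with the strong meaning required: ``$\theta_\mu(y,\cdot)$ constant'' must yield ``$\mu$ is a genuinely dilation-invariant measure near $y$'', not merely a statement about density ratios. For smooth $A$ this is Remark \ref{iotaremark}; in general one passes \eqref{mono} — and, if one prefers to derive the cone-splitting directly from the limiting radial defect measures and the vector identity \eqref{virtue} rather than by composing dilations, also the $\psi$-weighted formula \eqref{psisph} — to the limit along the blow-up sequence, invoking \cite{T00}. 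Everything else is the routine measure-theoretic bookkeeping indicated above.
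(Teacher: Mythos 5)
Your proposal is correct, and it rests on the same essential mechanism as the paper's Appendix A: both directions reduce to the fact that a strict cone tip of the limit measure is a genuine cone point of $\mu$ (not merely a point of constant density ratio), obtained by passing the monotonicity identities to the limit along the blow-up sequence, followed by a cone-splitting argument. The differences are in execution. For the splitting step, you compose dilations about $0$ and about the tips $y_i$ to produce translations, and then use the subgroup structure of $\{v: T_v^*\mu=\mu\}$ together with the global dilation invariance about $0$ to globalize; the paper instead first shows (using \eqref{virtue} at the level of the approximating connections $A_l$, and the cone property at $0$ to pass from radii $\le 1/2$ to all radii) that $\mu$ is a cone measure about $0,z_1,\dots,z_k$, writes $d\mu=r_i^{n-5}dr_{z_i}\wedge d\xi_i(\theta)$ about each such point, and then reads off translation invariance. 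Your dilation-composition argument is arguably the more transparent way to finish, and it bypasses \eqref{virtue} entirely. The trade-off is that the step you defer as the ``main obstacle'' is precisely where the paper's appendix does most of its work: the implication ``$\theta_\mu(y,\cdot)$ constant $\Rightarrow$ $\mu$ is dilation invariant about $y$'' is proved there via \eqref{psisph} with arbitrary angular test functions $\psi$ (the computations \eqref{halfballrigid}--\eqref{sph}), using that the annular weighted radial energies of $F_{A_l}$ vanish in the limit and a Cauchy--Schwarz bound on the cross term $\int_{B_\tau}|x-z_i|\langle\iota_{\partial r_{z_i}}F_{A_l},\iota_{\nabla\psi}F_{A_l}\rangle$, where the extra factor $|x-z_i|$ is what controls the contribution near the tip (the radial energy of $A_l$ is \emph{not} controlled at the center itself, so a small-ball/annulus splitting is needed there). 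So your citation of \cite{T00} and \eqref{psisph} points at exactly the right tool; carrying it out amounts to reproducing that estimate, and with it your argument closes completely.
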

The proof of Claim \ref{introproof} will be given in Appendix A.

\subsection{Quantitative stratification}
\label{qssec}
Following \cite{NV17}, and using Definition \ref{keydef}, we introduce the following definitions:
\begin{definition}[Quantitative stratification]\label{quanstrat0}\ \\
(1) For each $\epsilon>0$ and $r<1$, we define
$$S_{\epsilon,r}^{k}(A)= \bigg\{y\in B_1(p):\text{$A$ is not }(k+1,\epsilon)\mbox{-symmetric in $B_s(y)$},\ \text{for all } r\le s\le1 \bigg\}.$$
(2) For each $\epsilon>0$, we define
$$S^k_\epsilon(A)=\bigcap_{r>0}S_{\epsilon,r}^{k}(A)\equiv\bigg\{y\in B_1(p): \text{no tangent measure at $y$ is $(k+1,\epsilon)$-symmetric}\bigg\}.$$
(3) For each $k$, we define
$$S^k(A)\eqqcolon\bigcup_{\epsilon>0}S^k_\epsilon(A)=\bigg\{y\in B_1(p): \text{no tangent measure at $y$ is $k+1$-symmetric}\bigg\}.$$
\end{definition}
\begin{remark}
By Claim \ref{introproof}, (3) of Definition \ref{quanstrat0} is equivalent to Definition \ref{classsym}.
\end{remark}
\begin{remark}
Similarly, by using Definition \ref{keydefmu} one could make the definition of $S^k(\mu)$, $S^k_{\epsilon}(\mu)$, and $S^k_{\epsilon,r}(\mu)$ for a Radon measure $\mu$ obtained as in the paragraph preceding Definition \ref{conetipmu}.
\end{remark}
\begin{remark}
The quantitative stratification was introduced and proved extremely useful for the first time in \cite{CN13a}, where the authors obtained the $L^p$ bounds on the Riemann curvature under certain Ricci curvature assumptions, and achieved better regularity in the Einstein case. Later in \cite{CN13b} they extended the idea to the stationary harmonic maps and minimal currents. Since then, the idea has been used in \cite{CHN13}, \cite{CHN15}, \cite{CNV15}, \cite{FMS15}, \cite{BL15} to prove similar results in the areas of mean curvature flow, harmonic map flow, critical sets of elliptic equations, biharmonic maps, etc.
\end{remark}

\subsection{Main results}
\label{mainresults}
Let $B_{16}(p)\subseteq M^n$ where $M^n$ is a Riemannian manifold with metric $g$. Let $K_M$ be the smallest number that the following hold:
$$|\text{sec}_{B_{16}(p)}|\le K_M^2,$$
$$\text{inj}_{B_{16}(p)}\ge K_M^{-1}.$$
Upon rescaling, we assume that $K_M\le 100^{-1}$. Now let us begin by stating our main theorem for the quantitative stratification $S^k_{\epsilon,r}(A)$:
\begin{theorem}
\label{maintheorem}
Let $A$ be a stationary connection satisfying $\int_{B_{16}(p)}|F_A|^2dV_g\le \Lambda$, then for each $k$ and $\epsilon$, there exists $C(n,\Lambda,\epsilon)$ such that for all $r>0$ we have:
\begin{equation}
\label{maininequality}
\mbox{Vol}(B_r(S_{\epsilon,r}^{k}(A))\cap B_1(p))\le C(n,\Lambda,\epsilon)r^{n-k}.
\end{equation}
\end{theorem}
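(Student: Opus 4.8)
\emph{Proof strategy.} The plan is to adapt the quantitative-differentiation and Reifenberg-type machinery of Naber--Valtorta \cite{NV17} to the monotonicity formula \eqref{mono}--\eqref{psisph}. First I would reduce Theorem \ref{maintheorem} to the following \emph{covering estimate}: for every $r\in(0,1)$ one can cover $S^k_{\epsilon,r}(A)\cap B_1(p)$ by at most $C(n,\Lambda,\epsilon)\,r^{-k}$ balls of radius $r$. Granting this, $B_r(S^k_{\epsilon,r}(A))\cap B_1(p)$ is contained in a comparable number of balls of radius $2r$, whence $\mathrm{Vol}(B_r(S^k_{\epsilon,r}(A))\cap B_1(p))\le C(n,\Lambda,\epsilon)\,r^{n-k}$, which is \eqref{maininequality}. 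Since $\theta_A(x,\cdot)$ is nondecreasing and bounded by $C(n)\Lambda$ on $B_1(p)$, the total energy drop $\theta_A(x,2)-\lim_{s\to0}\theta_A(x,s)$ is uniformly bounded; this budget --- not a naive scale-by-scale iteration, which loses a logarithmic factor and yields only $r^{n-k-\eta}$ --- is what the counting will spend.

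Second, I would assemble the \emph{rigidity input}. Using \eqref{mono}, \eqref{psisph} and a compactness argument over sequences of bounded-energy stationary connections (invoking Claim \ref{introproof}, so that for limit measures Definition \ref{symdef1} and Definition \ref{keydefmu} agree), one obtains: (i) \emph{quantitative rigidity} --- for every $\epsilon>0$ there is $\delta(n,\Lambda,\epsilon)>0$ such that if the rescaled measure $r^{4-n}|F_A|^2\,dV_g$ on $B_2(x)$ is $\delta$-close, in weak-$*$ distance, to a $k$-symmetric cone, then $A$ is $(k,\epsilon)$-symmetric in $B_r(x)$ in the sense of Definition \ref{keydef}; and (ii) \emph{cone-splitting} --- if $A$ has energy drop $\le\delta$ at each of $k+1$ points of $B_{r/2}(x)$ that $\tau(n)r$-effectively span a $k$-plane, then $r^{4-n}|F_A|^2\,dV_g$ on $B_2(x)$ is $\epsilon$-close to a $k$-symmetric cone. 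The heart of (ii) is the elementary identity \eqref{virtue}: radial symmetry of the curvature about two distinct cone tips propagates to translation symmetry along the segment joining them, so several effectively-spanning almost-cone-tips force $\iota_v F_A$ to be small for every $v$ in the spanned plane.

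Third --- and this is where I expect the real difficulty to lie --- I would prove the \emph{$L^2$ best-approximation estimate}. With $\mu=|F_A|^2\,dV_g$ and, for affine $k$-planes $L$,
\[
\beta^k_\mu(x,r)^2:=\inf_L\ r^{-k-2}\int_{B_r(x)}\dist(y,L)^2\,d\mu(y),
\]
one wants, on any ball $B_r(x)\subseteq B_2(p)$ on which $A$ is \emph{not} $(k+1,\epsilon)$-symmetric,
\[
\beta^k_\mu(x,r)^2\ \le\ C(n,\Lambda,\epsilon)\,r^{-n}\int_{B_r(x)}\big(\theta_A(y,8r)-\theta_A(y,\delta r)\big)\,d\mu(y).
\]
The approach, following Naber--Valtorta, is to take $L$ to be the affine span of the top $k$ eigenvectors of the $\mu$-weighted second-moment matrix on $B_r(x)$ and to show that any point $y$ lying ``transversally far'' from $L$ relative to the spread of $\mu$ must have $\iota_{\partial r_y}F_A$ non-negligible on some annulus about $y$, which by \eqref{mono} is precisely a definite drop in $\theta_A(y,\cdot)$; the hypothesis of non-$(k+1,\epsilon)$-symmetry, via the cone-splitting lemma, excludes the degenerate alternative that $\mu$ concentrates near a lower-dimensional plane with no detectable drop. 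Summing over dyadic subscales and telescoping the drops then yields the Dini-type bound $\sum_i \beta^k_\mu(x,2^{-i}r_0)^2\le C(n,\Lambda,\epsilon)$ that the Reifenberg step requires. Transplanting this estimate to the Yang--Mills setting needs care with the $\mathfrak{g}_P$-valued curvature, the frame-dependence in \eqref{sta}, and the gradient correction term in \eqref{psisph}.

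Finally I would run the \emph{corona/covering decomposition}. Building a tree of balls from $B_1(p)$ downward, at each node I use a stopping-time rule on the $\mu$-averaged energy drop to separate balls with large drop --- few of them, since the total drop is bounded by $C(n)\Lambda$ --- from balls with small drop; on the latter the estimate of the third step makes the $\beta$-numbers summable against $\mu$ down to scale $r$, so the \emph{discrete Reifenberg theorem} of \cite{NV17} applies, yielding a uniform $r^{-k}$ packing bound on the centers and $k$-rectifiability of the limit set with bounded $\mathcal{H}^k$-measure. Summing the two families over the $O(\log(1/r))$ dyadic scales in $[r,1]$ produces the covering of $S^k_{\epsilon,r}(A)\cap B_1(p)$ by $C(n,\Lambda,\epsilon)\,r^{-k}$ balls of radius $r$, hence \eqref{maininequality}; pushing the same argument to scale $0$ gives in addition the $k$-rectifiability of $S^k_\epsilon(A)$, and hence of $S^k(A)=\bigcup_{\epsilon>0}S^k_\epsilon(A)$. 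The principal obstacle is the $L^2$ best-approximation estimate of the third step; once it is in hand, the remaining scale bookkeeping is intricate but now fairly standard.
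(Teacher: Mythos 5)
Your overall architecture (reduce to a packing/covering bound, estimate Jones-type $\beta$-numbers by energy drops, feed them into the discrete Reifenberg theorem of \cite{NV17}, and run a stopping-time induction on the bounded energy budget) is the same as the paper's. The genuine gap is in your third step, which you correctly identify as the crux: you assert the clean best-approximation inequality
\[
\beta^k_\mu(x,r)^2\ \le\ C(n,\Lambda,\epsilon)\,r^{-n}\int_{B_r(x)}\big(\theta_A(y,8r)-\theta_A(y,\delta r)\big)\,d\mu(y),
\]
i.e.\ the exact analogue of (7.2) of \cite{NV17}, and your proposed mechanism (``a point transversally far from $L$ must see a non-negligible $\iota_{\partial r_y}F_A$ on some annulus, and non-$(k+1,\epsilon)$-symmetry excludes the degenerate alternative by cone-splitting/compactness'') is in substance the contradiction argument of Lemma 7.2 of \cite{NV17}. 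That argument does not carry over here: with only $F_A\in L^2$ there is no Uhlenbeck compactness, so a sequence of connections does not converge as connections but only at the level of the energy measures $|F_{A_i}|^2dV_g$, and the directional quantities $\iota_{\nu_i}F_{A_i}$ (as well as the $\theta$-type quantitative symmetry itself) do not pass to such a measure limit; there is also no unique-continuation mechanism to propagate smallness of directional curvature from an annulus into the ball. This is exactly the content of Remark \ref{7217}: obtaining the annulus-type lower bound \eqref{directsmall00}, which is what your clean inequality needs, is equivalent to Proposition \ref{fake00}, and that is open for general stationary connections (it is only known under extra regularity, e.g.\ smooth Yang--Mills).

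The paper instead proves the weaker ball-version lower bound \eqref{directsmall0} of Lemma \ref{l2appp} by a \emph{direct} first-variation argument (plugging translated cutoff vector fields $X_{\tau,s}$ into the stationarity equation \eqref{sta}), with no compactness at all; the price is the extra near-field term $r^{2-n}\int_{B_{\epsilon_1 r}(x)}|\iota_{x-z}F_A|^2\,dV_g(z)$ on the right-hand side of \eqref{l2app}. The new point of the paper --- absent from your outline --- is that this error is summable over dyadic scales by the monotonicity formula \eqref{mono} (estimate \eqref{dyaest}), with total contribution $\le C(n,\Lambda)\epsilon_1^{n-2}$, so it can be absorbed below the Reifenberg threshold $\delta(n)^2$ by fixing $\epsilon_1=\epsilon_1(n,\Lambda,\epsilon)$ small before running the covering induction. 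Two further points to repair in your write-up: the best-approximation estimate must hold for \emph{arbitrary} finite measures (in the covering argument it is applied to the discrete packing measure $\sum_j r_j^k\delta_{x_j}$ supported on $S^k_{\epsilon,r}$, not to $\mu=|F_A|^2dV_g$ as you defined it, since Theorem \ref{DRR} requires $D^k$ of the discrete measure); and your ``quantitative rigidity/cone-splitting'' inputs are fine at the level of measures (weak-$*$ compactness plus \eqref{virtue} suffices, as in Theorem \ref{quandimred}), but they cannot be upgraded to statements about the connection itself, which is precisely why the clean inequality is out of reach by that route.
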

When we study the stratum $S^k_{\epsilon}$, we can refine the above to obtain structure results on the set itself. For the definition of $k$-rectifiability, we refer the readers to \cite{M95}.
\begin{theorem}
\label{maintheorem1}
Let $A$ be a stationary connection satisfying $\int_{B_{16}(p)}|F_A|^2dV_g\le \Lambda$, then for each $k$ and $\epsilon$ there exists $C(n,\Lambda,\epsilon)$ such that for all $r>0$ we have:
\begin{equation}
\label{maininequality1}
\mbox{Vol}(B_r(S_{\epsilon}^{k}(A))\cap B_1(p))\le C(n,\Lambda,\epsilon)r^{n-k}.
\end{equation}
In particular, we have $H^k(S^k_{\epsilon}(A))\le C(n,\Lambda,\epsilon)$. Furthermore, $S^k_{\epsilon}(A)$ is $k$-rectifiable, and for $k$-a.e. $x\in S_{\epsilon}^k(A)$, there exists a unique $k$-plane $V_k\subseteq T_x M$ such that every tangent measure of $A$ at $x$ is $k$-symmetric with respect to $V_k$.
\end{theorem}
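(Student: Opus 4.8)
The plan is to follow the now-standard strategy of Naber--Valtorta (as in \cite{NV17}) adapted to the stationary-connection setting, where the monotone quantity is $\theta_A$ rather than the normalized energy of a harmonic map. The backbone is a quantitative rigidity/cone-splitting mechanism together with an $L^2$-best-plane (Reifenberg-type) estimate on the measure $\mu^k = H^k \restriction S^k_\epsilon(A)$. First I would establish the deducibility of Theorem \ref{maintheorem1} from Theorem \ref{maintheorem}: since $S^k_\epsilon(A) = \bigcap_{r>0} S^k_{\epsilon,r}(A) \subseteq S^k_{\epsilon,r}(A)$ for every $r$, the Minkowski estimate \eqref{maininequality1} is immediate from \eqref{maininequality}, and letting $r\to 0$ in the Minkowski bound gives $H^k(S^k_\epsilon(A)) \le C(n,\Lambda,\epsilon)$ by the standard covering comparison between Minkowski content and Hausdorff measure. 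So the real content is rectifiability and the uniqueness of the tangent plane.

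For rectifiability, the key tool is a \emph{covering lemma}: for every ball $B_r(x)$ with $x \in S^k_\epsilon(A)$ one constructs a covering of $S^k_\epsilon(A) \cap B_r(x)$ by controllably many balls of radius $r/5$ (say) such that on each child ball either the energy drops by a definite amount, or $S^k_\epsilon(A)$ is trapped in a thin neighborhood of a $k$-plane with good $L^2$-control. The mechanism behind this is \emph{quantitative cone-splitting}: if $A$ is $(0,\delta)$-symmetric at $k+1$ points that $\tau(n)r$-effectively span a $k$-plane $V_k$ with respect to $B_r(x)$, and $A$ fails to be $(k+1,\epsilon)$-symmetric in $B_r(x)$, then the energy is quantitatively pinched along $V_k$ but not transverse to it, which via the monotonicity formula \eqref{mono} and \eqref{psisph} forces the points of $S^k_\epsilon(A)$ in $B_r(x)$ to concentrate near $V_k$. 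Quantitatively, one proves a Jones-type $\beta$-number bound of the form
\begin{equation}
\label{eq:betanumber}
\sum_{j} \beta_{\mu^k}^k(x_j, r_j)^2\, r_j^k \le C\,(\text{total energy drop}),
\end{equation}
where $\beta_{\mu^k}^k(x,s)^2 = s^{-k-2}\inf_{V}\int_{B_s(x)} \dist(y,V)^2\, d\mu^k(y)$ over affine $k$-planes $V$, and the right-hand side is controlled by Theorem \ref{maintheorem} after summing over a Vitali subcover. Feeding \eqref{eq:betanumber} into the rectifiable Reifenberg theorem of Naber--Valtorta then yields both the uniform Ahlfors bound $H^k(S^k_\epsilon(A) \cap B_r(x)) \le C r^k$ and the $k$-rectifiability of $S^k_\epsilon(A)$.

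For the uniqueness of the tangent plane at $H^k$-a.e. point, I would argue as follows. Rectifiability already provides, for $H^k$-a.e.\ $x$, an approximate tangent $k$-plane $V_x$ to the rectifiable set $S^k_\epsilon(A)$. The $\beta$-number summability \eqref{eq:betanumber} upgrades this: it implies $\beta_{\mu^k}^k(x,s) \to 0$ as $s\to 0$ for $H^k$-a.e.\ $x$, with a Dini-type rate, so the best planes $V_{x,s}$ at scale $s$ converge to a unique plane $V_x$. One then shows that any tangent measure $\nu$ of $A$ at such an $x$ must be invariant under translations along $V_x$: the concentration of $\mu^k$ near $V_{x,s}$ at every small scale, combined with the fact that each point of $S^k_\epsilon(A)$ carries a uniform lower energy density (from the monotonicity formula, $\theta_A(y,0^+) \ge \eta(n,\Lambda,\epsilon) > 0$ on $S^k_\epsilon(A)$, as in Lemma 3.2.1 and Lemma 5.3.1 of \cite{T00}), forces $\nu$ to be $(0)$-symmetric at $H^k$-a.e.\ point of a $k$-plane's worth of directions, whence $k$-symmetric with respect to $V_x$ by a cone-splitting argument applied to $\nu$ itself. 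Here Claim \ref{introproof} is used to match the two notions of symmetry for the tangent measure.

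The main obstacle I anticipate is the quantitative cone-splitting / $L^2$-estimate step, i.e.\ establishing \eqref{eq:betanumber}. The subtlety is that the natural monotone quantity $\theta_A$ controls only the \emph{radial} part $\iota_{\partial r_x} F_A$ of the curvature (via \eqref{mono}), and to extract information about a full $k$-plane of symmetries one needs the refined, $\psi$-weighted monotonicity \eqref{psisph} together with the elementary geometric identity \eqref{virtue} relating radial derivatives based at different points. Translating an effectively-spanned configuration of almost-cone-tips into a quantitative $L^2$ bound on $\dist(\cdot, V_k)$ against $\mu^k$ — uniformly in the scale and with constants independent of the connection — is where the bulk of the analytic work lies, and it is the place where the proof genuinely uses stationarity beyond the mere existence of tangent measures. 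A secondary technical point is handling the manifold geometry: the factor $e^{cr^2}$ in the almost-monotone quantity $\theta_A(x,r)$ and the distortion of geodesic balls must be absorbed into error terms that are harmless at small scales, which is routine but must be tracked carefully through the Reifenberg iteration.
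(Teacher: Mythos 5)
Your proposal follows essentially the same route as the paper: the Minkowski and $H^k$ bounds for $S^k_{\epsilon}(A)$ are inherited from the $S^k_{\epsilon,r}(A)$ estimate, while rectifiability and tangent-plane uniqueness come from an inductive covering lemma with an energy-drop alternative, an $L^2$-best-approximation (Jones $\beta$-number) estimate driven by cone-splitting and the monotonicity formula, and the discrete and continuous rectifiable-Reifenberg theorems of \cite{NV17}. The obstacle you flag is exactly where the paper's new work lies: since $\theta_A$ controls only the radial component of $F_A$, the $L^2$-best approximation inequality here carries an extra term $r^{2-n}\int_{B_{\epsilon_1 r}(x)}|\iota_{x-z}F_A|^2\,dV_g$, which the paper shows is summable over dyadic scales via the monotonicity formula and can be made small by choosing $\epsilon_1=\epsilon_1(n,\Lambda,\epsilon)$, so that the Reifenberg smallness threshold $\delta(n)^2$ is still met.
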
 
Finally, we close this subsection by stating our main results when it comes to the classical stratification $S^k(A)$. The following theorem follows easily from the previous theorem in view of the formula $S^k(A)=\bigcup_\epsilon S^k_{\epsilon}(A)$.
\begin{theorem}
\label{maintheorem2}
Let $A$ be a stationary connection satisfying $\int_{B_{16}(p)}|F_A|^2dV_g\le \Lambda$, then $S^k(A)$ is $k$-rectifiable for each $k$, and for $k$-a.e. $x\in S^k(A)$, there exists a unique $k$-plane $V_k\subseteq T_x M$ such that every tangent measure of $A$ at $x$ is $k$-symmetric with respect to $V_k$.
\end{theorem}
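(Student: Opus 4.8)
The plan is to obtain Theorem \ref{maintheorem2} as a formal consequence of Theorem \ref{maintheorem1} together with the set-theoretic identity $S^k(A)=\bigcup_{\epsilon>0}S^k_\epsilon(A)$ recorded in part (3) of Definition \ref{quanstrat0}. The first step is to upgrade this to a \emph{countable} decomposition. Using the monotonicity formula \eqref{mono} one checks that an $(\epsilon',r)$-cone tip is automatically an $(\epsilon,r)$-cone tip whenever $\epsilon'\le\epsilon$, so a ball in which $A$ is $(k+1,\epsilon')$-symmetric is also one in which it is $(k+1,\epsilon)$-symmetric; this gives $S^k_{\epsilon,r}(A)\subseteq S^k_{\epsilon',r}(A)$ and hence $S^k_\epsilon(A)\subseteq S^k_{\epsilon'}(A)$ for $\epsilon'\le\epsilon$. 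Consequently the family $\{S^k_{1/j}(A)\}_{j\ge 1}$ is nondecreasing and exhausts $S^k(A)$:
\begin{equation*}
S^k(A)=\bigcup_{j=1}^{\infty}S^k_{1/j}(A).
\end{equation*}

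For the rectifiability I would simply invoke Theorem \ref{maintheorem1}, which asserts that each $S^k_{1/j}(A)$ is $k$-rectifiable, and then use that a countable union of $k$-rectifiable sets is $k$-rectifiable — immediate from the definition in \cite{M95}, since each $S^k_{1/j}(A)$ is covered up to an $H^k$-null set by countably many Lipschitz images of subsets of $\mathbb{R}^k$, and a countable union of countable collections is countable. I would point out that, unlike in Theorem \ref{maintheorem1}, no finite bound on $H^k(S^k(A))$ is claimed: the constants $C(n,\Lambda,\epsilon)$ blow up as $\epsilon\to 0$, so $S^k(A)$ may carry infinite $H^k$-measure while still being $k$-rectifiable.

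For the uniqueness of the approximate tangent plane I would patch the exceptional sets. By Theorem \ref{maintheorem1}, for each $j$ there is an $H^k$-null set $N_j\subseteq S^k_{1/j}(A)$ such that for every $x\in S^k_{1/j}(A)\setminus N_j$ there is a \emph{unique} $k$-plane $V_k(x)\subseteq T_xM$ with the property that every tangent measure of $A$ at $x$ is $k$-symmetric with respect to $V_k(x)$. Set $N=\bigcup_{j\ge 1}N_j$; then $H^k(N)=0$ by countable subadditivity. If $x\in S^k(A)\setminus N$, the countable decomposition above puts $x$ in some $S^k_{1/j}(A)$, and since $x\notin N_j$ the conclusion holds at $x$. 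The statement ``every tangent measure of $A$ at $x$ is $k$-symmetric with respect to $V_k(x)$'' depends only on $x$ and not on the stratum it was extracted from, so the planes produced for different $j$ agree and there is nothing to reconcile. There is essentially no genuine obstacle in this argument: the only points needing care are the (easy) monotonicity observation used to pass to a countable union and the harmless accumulation of $H^k$-null sets.
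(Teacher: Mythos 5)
Your argument is correct and is essentially the paper's own proof: the paper deduces Theorem \ref{maintheorem2} from Theorem \ref{maintheorem1} via the identity $S^k(A)=\bigcup_{\epsilon>0}S^k_{\epsilon}(A)$, following Subsection 9.3 of \cite{NV17}, which is exactly your countable exhaustion $S^k(A)=\bigcup_j S^k_{1/j}(A)$ (justified by the monotonicity of the strata in $\epsilon$) together with the facts that a countable union of $k$-rectifiable sets is $k$-rectifiable and that the exceptional $H^k$-null sets can be combined. Your remarks that no finite $H^k$-bound survives the limit $\epsilon\to0$ and that the a.e.\ unique plane depends only on $x$ are also consistent with the paper.
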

We end this section by giving some further remarks on a few applications of these results. For convenience denote by $\mathcal{A}(\Lambda)$ the class of connections considered in this paper which satisfy $A\in L^2_{\text{loc}}(B_{16}(p))$, $\int_{B_{16}(p)}|F_A|^2\le \Lambda$, and the stationarity condition \eqref{sta}.

Note that instead of considering the classical singular set of $A$
\begin{equation}
\text{Sing}(A)=\{x: \text{there exists a neighborhood of $x$ in which $A$ is regular}\},
\end{equation}
the entire paper is devoted to studying $S(A)=\{x: \lim_{r\to 0}\theta_A(x,r)\neq 0\}$. While it is unlikely that $\text{Sing}(A)$ agrees with $S(A)$ for a general $A\in \mathcal{A}(\Lambda)$, one can prove $\text{Sing}(A)=S(A)$ by imposing further regularity assumptions to restrict to a subclass of $\mathcal{A}(\Lambda)$. For instance, in \cite{TT04} the authors added the additional assumptions that $A$ being admissible Yang-Mills, and proved the following $\epsilon$-regularity theorem:
\begin{theorem}[\cite{TT04}]
Let $A$ be an admissible stationary Yang-Mills connection with $\int_{B_2(p)}|F_A|^2\le \Lambda$. Then there exists $\epsilon\le\epsilon(n,\Lambda)$, for all $x\in B_1(p),r\le 2$, if $\theta_A(x,r)\le \epsilon$, then $A$ is smooth in $B_{r/2}(x)$.
\end{theorem}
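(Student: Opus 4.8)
My plan is to run the classical Yang--Mills $\epsilon$-regularity scheme, reinforced by a singularity-removal step: use monotonicity to turn the scale-invariant smallness $\theta_A(x,r)\le\epsilon$ into a Morrey bound on $F_A$; pass to a Uhlenbeck--Coulomb gauge in which the Yang--Mills equation becomes an elliptic system; bootstrap that system to smoothness; and finally show that the a priori singular set is removable. After rescaling, and working locally, I may assume $r=2$, $x=p$, and treat the base as $\mathbb{R}^n$ with the monotone quantity $\theta_A$ as in \eqref{mono} (the curvature corrections being harmless); thus $A$ is an admissible stationary Yang--Mills connection on $B_2(p)$ -- smooth away from a closed set $\Sigma$ with $\mathcal{H}^{n-4}(\Sigma\cap B_2(p))<\infty$ -- with $\int_{B_2(p)}|F_A|^2\le\Lambda$ and $\theta_A(p,2)\le\epsilon$, and I must show $A$ is smooth on $B_1(p)$. \textbf{Step 1 (Morrey smallness).} First I would invoke \eqref{mono}: since $s\mapsto\theta_A(y,s)$ is nondecreasing, for $y\in B_{3/2}(p)$ and $0<s\le\tfrac12$ one has $B_s(y)\subseteq B_2(p)$, hence $\theta_A(y,s)\le\theta_A(y,\tfrac12)\le 4^{n-4}\theta_A(p,2)\le 4^{n-4}\epsilon$; therefore $\sup\{\,s^{4-n}\!\int_{B_s(y)}|F_A|^2:y\in B_{3/2}(p),\ 0<s\le\tfrac12\,\}\le C_0(n)\epsilon$, i.e.\ $F_A$ is small in the Morrey $L^{2,n-4}$-norm on $B_{3/2}(p)$. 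This is the single place where stationarity is used decisively; the refined identity \eqref{psisph} will be invoked below to extract a quantitative decay rate.

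\textbf{Step 2 (gauge fixing and bootstrap).} Assuming first that $A$ is smooth on $B_2(p)$, I would fix a gauge on small balls. On $B_s(y)\subseteq B_{3/2}(p)$ the curvature is small in the scale-invariant Morrey norm, and a Morrey-adapted Uhlenbeck gauge-fixing theorem -- established, as in \cite{T00}, by a continuity/induction on scales in which concentration is ruled out by bubbling against the bound $\Lambda$ -- produces a Coulomb gauge with $d^*A=0$, $\|A\|_{W^{1,2}(B_s(y))}\lesssim\|F_A\|_{L^2(B_s(y))}$, and the analogous Morrey estimates at all smaller scales. In that gauge the Yang--Mills equation $D_A^\ast F_A=0$ becomes an elliptic system $\Delta A=$ (terms quadratic and cubic in $A$ and $\partial A$), and I would bootstrap: feeding the Morrey smallness of $\partial A$ into this system and iterating with the Adams--Morrey embedding and Morrey-space Calder\'on--Zygmund estimates raises the integrability exponent of $\partial A$ in finitely many steps until $F_A\in L^\infty_{\mathrm{loc}}$; then $A\in C^{1,\alpha}$ in the good gauge, and ordinary Schauder/$L^p$ bootstrap yields $A\in C^\infty$ on $B_1(p)$. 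The quantitative engine is the decay estimate that $\theta_A(y,s)\le\epsilon$ implies $\theta_A(y,\lambda s)\le\tfrac12\theta_A(y,s)$ for a fixed $\lambda(n)<1$, hence $\theta_A(y,s)\le C\,(s/s_0)^{2\alpha}$; this simultaneously shows $B_1(p)\cap S(A)=\emptyset$ and provides the Campanato-type control the bootstrap consumes.

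\textbf{Step 3 (removing $\Sigma$).} For general admissible $A$ I must handle $\Sigma$. Given $\delta>0$ I would cover $\Sigma\cap B_{3/2}(p)$ by balls $B_{\rho_j}(z_j)$ with all $\rho_j\le\delta$ and $\sum_j\rho_j^{n-4}\le\mathcal{H}^{n-4}(\Sigma\cap B_{3/2}(p))+\delta$, and build a cutoff $\chi_\delta$ that vanishes on $\bigcup_j B_{\rho_j}(z_j)$, equals $1$ off $\bigcup_j B_{2\rho_j}(z_j)$, and satisfies $|\nabla\chi_\delta|\lesssim\rho_j^{-1}$ on the $j$-th annulus -- so $\|\nabla\chi_\delta\|_{L^q}\to 0$ as $\delta\to0$ for every $q<4$, since $\sum_j\rho_j^{n-q}\le\delta^{4-q}(\mathcal{H}^{n-4}(\Sigma\cap B_{3/2}(p))+\delta)$. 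Testing the weak Yang--Mills identity and the stationarity identity \eqref{sta} against $X\chi_\delta$ leaves error terms supported on $\bigcup_j(B_{2\rho_j}(z_j)\setminus B_{\rho_j}(z_j))$ and built from $F_A$, $A$ and $\nabla\chi_\delta$; a careful estimate, using that $\mathcal{H}^{n-4}(\Sigma\cap B_{3/2}(p))<\infty$ makes $\Sigma$ carry zero $W^{1,q}$-capacity for $q<4$, together with the Morrey bound of Step~1 and the Coulomb control of $A$ off $\Sigma$, forces these errors to vanish as $\delta\to0$ (this is the singularity-removal theorem of \cite{TT04}). Hence $A$ is genuinely weakly stationary Yang--Mills across $\Sigma$ on $B_{3/2}(p)$, so the monotonicity formula and the Morrey bound of Step~1 hold there; and since the gauge fixing of Step~2 uses only the smallness of $\int_{B_s(y)}|F_A|^2$-type quantities -- still valid -- it applies on every small ball irrespective of $\Sigma$. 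Patching the local Coulomb gauges produces a gauge on $B_1(p)$ in which the bootstrap of Step~2 shows $A$ is smooth, whence $\Sigma\cap B_1(p)=\emptyset$.

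\textbf{Main difficulty.} The hardest part will be the gauge fixing in Step~2: for $n>4$ the Morrey smallness that monotonicity delivers is strictly weaker than the $\|F_A\|_{L^{n/2}}$-smallness that Uhlenbeck's theorem requires, so building a controllable Coulomb gauge and proving the companion decay estimate under Morrey smallness alone is the technical core -- and it is precisely there that \eqref{psisph} is indispensable, whether one runs Tian's induction on scales from \cite{T00} (at each scale either Uhlenbeck-on-a-ball applies once the bootstrap has upgraded the smallness to $L^{n/2}$, or a bubble forms and contradicts $\Lambda$) or an approximate-gauge construction in the spirit of Meyer--Rivi\`ere adapted to higher dimensions. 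The bookkeeping near $\Sigma$ from Step~3 must then be carried consistently through that induction, which accounts for the bulk of the work in \cite{TT04}.
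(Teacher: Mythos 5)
The paper you are working from does not prove this statement at all: it is quoted verbatim from \cite{TT04} precisely so that Theorem \ref{maintheorem2} can be applied to conclude $\text{Sing}(A)=S(A)$ for that subclass, so there is no internal proof to compare against. Judged on its own terms, your sketch does reproduce the correct architecture of the Tao--Tian argument (monotonicity $\Rightarrow$ scale-invariant Morrey smallness of $F_A$; a Coulomb-type gauge under Morrey smallness; elliptic bootstrap and decay; removability of the admissible singular set $\Sigma$), and Step 1 is complete and correct as written.

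The genuine gap is that the two steps carrying all the weight are asserted rather than proved, and the one step you do carry out in detail does not do the work you attribute to it. Your Step 3 cutoff/capacity argument only re-derives the weak Yang--Mills and stationarity identities across $\Sigma$; but stationarity on the whole ball and the weak equations are already part of the hypothesis of admissibility in this setting, so nothing new is gained there, and in particular it does not ``force $\Sigma\cap B_1(p)=\emptyset$''. What actually removes $\Sigma$ is the construction of a single controlled gauge on balls that meet $\Sigma$, with estimates depending only on the Morrey smallness of $F_A$ -- and that construction is the entire content of \cite{TT04}: Uhlenbeck's theorem needs $F_A$ small in $L^{n/2}$ (or $A\in W^{1,n/2}$ a priori), which for $n>4$ is strictly stronger than the $L^{2,n-4}$ smallness monotonicity provides, and near $\Sigma$ you do not even know $A\in W^{1,2}$ in any gauge, so ``patching local Coulomb gauges'' is not available until the theorem you are trying to prove is already in hand. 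Likewise the decay claim $\theta_A(y,\lambda s)\le\tfrac12\theta_A(y,s)$ in Step 2 is the key analytic estimate of the $\epsilon$-regularity theorem, not a consequence you may quote; it is established in \cite{T00} and \cite{TT04} only after the gauge problem is solved (via comparison with tangent cones/harmonic representatives in the fixed gauge). So as a blind proof the proposal is an accurate roadmap of the literature, but the core of the argument -- Morrey-norm gauge fixing across a set of finite $\mathcal{H}^{n-4}$ measure and the resulting curvature decay -- is missing, and the capacity argument offered in its place cannot substitute for it.
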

It is an immediate consequence of this theorem that $\text{Sing}(A)=S(A)$. Very recently, the authors of \cite{PR17} proved a similar $\epsilon$-regularity theorem for a different subclass of $\mathcal{A}(\Lambda)$; see Definition 1.7 and Theorem 1.16 of \cite{PR17}. Therefore, by applying Theorem \ref{maintheorem2} to the connections $A$ considered in \cite{TT04} and \cite{PR17}, we immediately obtain the rectifiability results about their classical singular sets $\text{Sing}(A)$.

\subsection{Quantitative stratification of stationary harmonic maps}\label{qsshm}
In the earlier pioneering work \cite{NV17}, the authors studied the quantitative stratifications of stationary harmonic maps. They first defined the quantitative symmetry of maps. Using this they defined the quantitative stratification $S^k_{\epsilon,r}(f)$ of any stationary harmonic map $f$. Their main results include the Minkowski volume estimates $\mbox{Vol}(B_r(S_{\epsilon,r}^{k}(f))\cap B_1(p))\le C(n,\Lambda,\epsilon)r^{n-k}$, $\mbox{Vol}(B_r(S_{\epsilon}^{k}(f))\cap B_1(p))\le C(n,\Lambda,\epsilon)r^{n-k}$ as well as the rectifiability of $S^k_{\epsilon}(f)$ and $S^k(f)$; see Theorems 1.3, 1.4 and 1.5 of \cite{NV17}.

This paper is originally motivated by \cite{NV17}, with the intention of proving results similar to Theorems 1.3, 1.4 and 1.5 of \cite{NV17} in the setting of stationary connections.

Therefore, there are many similar aspects shared by \cite{NV17} and this paper, which include the important role of a monotone quantity, the same type of problems (see Subsection \ref{qsshm}), as well as the main technical tools used in tackling the problems, i.e. the rectifiable-Reifenberg theorem (see Section \ref{recreifen}) and the $L^2$-best approximation theorem (see Section \ref{l2apth}). However, this paper sees new difficulties.

\subsection{New difficulties}\label{newdiff}
In this subsection, we first point out the difficulties in generalizing the quantitative stratification theory from the context of stationary harmonic maps in \cite{NV17} to the current context of stationary connections. Then we explain the key ideas introduced in this work that overcome them and further strengthens \cite{NV17}.

The first real challenge lies in extending the notion of quantitative symmetry to the context of stationary connections, in order to produce a satisfying quantitative stratification theory. Indeed, one could naturally come up with a definition similar to Definition 1.1 of \cite{NV17} in the context of stationary connections, as shown below:
\begin{definition}[An analogue of Definition 1.1 of $\text{\cite{NV17}}$]
\label{fakedef}
$A$ is $(k,\epsilon)$-symmetric in $B_r(x)$ if there exists a gauge transformation $\sigma$ defined on $B_r(x)$ and a $k$-symmetric $L^2$-connection $\tilde{A}$ such that
\begin{equation}
\label{fakedefeq}
r^{2-n}\int_{B_r(x)}|\sigma^* A-\tilde{A}|^2\le\epsilon.
\end{equation}
Here, the connection forms $\sigma^*A$ and $\tilde{A}$ play the roles of the maps $f$ and $\tilde{f}$ in Definition 1.1 of \cite{NV17} respectively. This correspondence could be explained by a well known heuristic that the connection $A$ acts like an antiderivative of the curvature $F_A$.
\end{definition}
However, if we work with Definition \ref{fakedef} we will face extra technical issues such as lacking Uhlenbeck compactness. This will be highly problematic in arguing by contradiction which is common in the paper. For example, in such arguments it will often be the case that we consider a sequence of connections $A_i$ being $(k,\epsilon_i)$-symmetric with $\epsilon_i\to 0$, which also violates the desired conclusion. From $\{A_i\}_i$ we hope to extract a subsequence (via ``compactness'') that nicely converges to some strict $k$-symmetric connection which is supposed to satisfy the desired conclusion. Thus, upon passing to the limit a contradiction occurs. Unfortunately, if we do not impose a strong enough Sobolev control (or other possible regularity assumptions) on $A$, our only assumption $|F_A|\in L^2_{\text{loc}}$ would be too weak to enforce an Uhlenbeck compactness, and the proposed contradiction arguments break down. For detailed discussions on the Uhlenbeck compactness, please see Chapters 6, 9, and 10 of \cite{W04}; see also \cite{U82a} and \cite{U82b}.
%there are obvious issues . Indeed, the gauge transformations naturally appear in Definition \ref{fakedef}, and creates Sobolev embeddings and (therefore) a lack of Uhlenbeck compactness. the proofs of the main results in both \cite{NV17} and this paper heavily rely upon contradiction and compactness arguments. 

In order to avoid the issues stated above, we make Definition \ref{keydef} instead of directly generalizing Definition 1.1 of \cite{NV17}. The benefits from the new definition are significant. Firstly, since the curvature $F_A$ is used in place of the connection form, the notion of $(k,\epsilon)$-symmetry is gauge invariant. As a matter of fact, we are saved using the gauge transformations throughout this paper. Secondly, instead of Uhlenbeck compactness, the weak-$*$ compactness of positive Radon measures with uniformly bounded variations now becomes sufficient. Thirdly, from the quantitative stratification theoretical viewpoint, this definition is adapted to various geometric contexts, such as harmonic maps, mean curvature flows, minimal currents, etc. Indeed, to generalize Definition \ref{keydef} to a different context, one only needs to replace $\theta_A(x,r)$ by the monotone quantity in that context. For future convenience, let us refer to the quantitative symmetry defined in this way as ``$\theta$-type'' (here ``$\theta$'' stands for the monotone quantity). Correspondingly, refer to Definition 1.1 of \cite{NV17}, Definition \ref{fakedefeq}, etc., as ``$L^2$-type''. In addition, denote by $S^k_{\delta,r,\theta}$ (resp. $S^k_{\delta,r,L^2}$) the quantitative stratifications defined by $\theta$-type (resp. $L^2$-type) quantitative symmetry.

Despite the positive aspects listed above, a new trouble is introduced. Roughly speaking, $\theta$-type is a stronger notion than $L^2$-type. While this is only a heuristic for connections (rigidifying it would require more regularity assumptions on $A$), it could be made rigorous in the context of stationary harmonic maps: let $f$ be a stationary harmonic map between Riemannian manifolds $M$ and $N$ with $\int_{M}|\nabla f|^2dV_g\le \Lambda$. For the sake of convenience, we assume that $M=B_{16}(p)$ equipped with Euclidean metric. It is well known that $\theta_f(x,r)=r^{2-n}\int_{B_r(x)}|\nabla f|^2 dV$ satisfies a monotonicity formula (e.g. see \cite{L99}, \cite{NV17}, etc,), which allows us to define the $\theta$-type quantitative symmetry of $f$. The heuristic that $\theta$-type being stronger than $L^2$-type could now be made rigorous in the context of stationary maps as follows:
\begin{claim}
\label{weakstrong}
For all $\epsilon>0$ and $k=0,\cdots,n$, there exists $\delta(n,\epsilon,\Lambda)$ such that if $f$ is $\theta$-type $(k,\delta)$-symmetric in $B_r(x)$, then $f$ is $L^2$-type $(k,\epsilon)$-symmetric in $B_r(x)$. In addition, $S^k_{\epsilon,r,L^2}(f)\subseteq S^k_{\delta,r,\theta}(f)$.
\end{claim}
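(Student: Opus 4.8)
The plan is to argue by contradiction and use the monotonicity formula together with the weak-$*$ compactness of measures, exactly the kind of argument described in Subsection \ref{newdiff} as the reason for choosing the $\theta$-type definition. Fix $\epsilon>0$ and $k$. Suppose the first assertion fails: then there is a sequence $\delta_i\to 0$ and stationary harmonic maps $f_i$ which are $\theta$-type $(k,\delta_i)$-symmetric in $B_{r_i}(x_i)$ but not $L^2$-type $(k,\epsilon)$-symmetric in $B_{r_i}(x_i)$. After translating and rescaling (using the scale invariance of both notions and of the monotonicity formula) we may assume $x_i=0$ and $r_i=1$, so each $f_i$ is defined on $B_{16}(0)$ with $\int_{B_{16}}|\nabla f_i|^2\le\Lambda$. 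By the $\theta$-type $(k,\delta_i)$-symmetry there are $k$-planes $V^i$ with $V^i\cap B_{1/10}(0)\neq\emptyset$, points $x_0^i,\dots,x_k^i$ which $\tau(n)$-effectively span $V^i$ with respect to $B_1(0)$, and each $x_j^i$ is a $(\delta_i,1)$-cone tip, i.e. $|\theta_{f_i}(x_j^i,2)-\theta_{f_i}(x_j^i,2\delta_i)|\le\delta_i$.

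First I would pass to a subsequence along which $|\nabla f_i|^2\,dV\rightharpoonup\mu$ weak-$*$ as Radon measures on $B_{16}$, the planes $V^i\to V$, the points $x_j^i\to x_j$, and (by the $\epsilon$-regularity and compactness theory for stationary harmonic maps, e.g. from \cite{L99}, \cite{NV17}) the maps $f_i\to f$ in $W^{1,2}_{\mathrm{loc}}$ away from the singular set of $\mu$, with $f$ again a stationary harmonic map and $\mu=|\nabla f|^2\,dV+\nu$ for a defect measure $\nu$. The effective-spanning condition is closed, so $x_0,\dots,x_k$ still $\tau(n)$-effectively span $V$ with respect to $B_1(0)$, and $V\cap B_{1/10}(0)\neq\emptyset$. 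Next, the monotonicity formula for $f_i$ gives, for each $j$,
\[
\theta_{f_i}(x_j^i,2)-\theta_{f_i}(x_j^i,2\delta_i)=\int_{A_{2\delta_i,2}(x_j^i)}c\,|x-x_j^i|^{2-n}\Big|\tfrac{\partial f_i}{\partial r_{x_j^i}}\Big|^2\,dV,
\]
and the left side is $\le\delta_i\to0$. I would use upper semicontinuity of the density $\theta_\mu(\cdot,s)$ under weak-$*$ convergence, plus the fact that $\theta_{f_i}(x_j^i,2\delta_i)$ is bounded below by the density at $x_j$ (monotonicity), to conclude that $\theta_\mu(x_j,s)$ is independent of $s\in(0,2]$ for each $j$; hence each $x_j$ is a strict cone tip of the limit measure $\mu$, and one upgrades this to $\iota_{\partial r_{x_j}}(\text{the limiting ``gradient''})=0$ in the appropriate sense — concretely, $\nu$ is radially invariant about each $x_j$ and $\partial f/\partial r_{x_j}=0$ a.e. on $B_2(x_j)$. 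Using the elementary identity \eqref{virtue} at the $k+1$ effectively spanning points exactly as in Remark \ref{keyremark}, the cone structure about $x_0,\dots,x_k$ forces translation invariance of $\mu$, and of $f$, along all of $V$: $f$ (after removing its $V$-directions) descends to a $k$-symmetric map, and $\nu$ is $V$-invariant. In particular $f$ is strictly $k$-symmetric in $B_1(0)$ with respect to $V$.

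Now I would derive the contradiction. Since each $f_i$ is not $L^2$-type $(k,\epsilon)$-symmetric in $B_1(0)$, in particular $\mathrm{dist}_{L^2}$ of $f_i$ (up to gauge, but for maps there is no gauge) to the class of $k$-symmetric maps through $V^i$ is $\ge\epsilon$: $1^{2-n}\int_{B_1(0)}|f_i-\tilde f|^2\ge\epsilon$ for every $k$-symmetric $\tilde f$. But $f_i\to f$ in $L^2(B_1)$ (by Rellich, since $\|\nabla f_i\|_{L^2(B_2)}$ is bounded and, away from the finitely-many-codim-$\ge3$ singular set, convergence is strong; globally the weak $W^{1,2}$ limit still gives $L^2$ convergence on $B_1$ after passing to a further subsequence because $B_1\Subset B_2$), and $f$ is itself $k$-symmetric about $V$ with $V^i\to V$. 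Taking $\tilde f$ to be $f$ pre-composed with a rotation sending $V$ to $V^i$, one gets $\int_{B_1(0)}|f_i-\tilde f_i|^2\to0$, contradicting the lower bound $\ge\epsilon$. This proves the first claim; the inclusion $S^k_{\epsilon,r,L^2}(f)\subseteq S^k_{\delta,r,\theta}(f)$ is then immediate from the contrapositive of the first claim applied at every scale $s\in[r,1]$: if $f$ is $\theta$-type $(k+1,\delta)$-symmetric in $B_s(y)$ for some such $s$, then it is $L^2$-type $(k+1,\epsilon)$-symmetric there, so $y\notin S^k_{\epsilon,r,L^2}(f)$.

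\textbf{Main obstacle.} The delicate point is the passage to the limit: making rigorous that $\theta$-type $(k,\delta_i)$-symmetry of $f_i$ actually forces \emph{strict} $k$-symmetry of the limit $f$ (not merely of the limit measure $\mu$), which requires controlling the defect measure $\nu$ and showing it too is $V$-invariant, and promoting the vanishing of $\int|x-x_j^i|^{2-n}|\partial_r f_i|^2$ on annuli to pointwise radial invariance of $f$ about each $x_j$. One must be careful that the $(\delta_i,1)$-cone tip condition only controls the scales $[2\delta_i,2]$, so the radial derivative could in principle concentrate near the tips; this is handled by noting $2\delta_i\to0$, so in the limit the annulus becomes a full punctured ball. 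Secondly, one needs the inner radius of effective spanning to be uniformly bounded below (the fixed $\tau(n)$) so that the limiting points $x_0,\dots,x_k$ do not collapse and still span a genuine $k$-plane — this is exactly why $\tau$ is fixed as a dimensional constant in the Definition \ref{effspa} remark. Neither obstacle is severe given the compactness theory already available for stationary harmonic maps, but these are the steps requiring the most care.
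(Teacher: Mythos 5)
Your argument is essentially correct, but it takes a genuinely different route from the one the paper indicates: the paper disposes of Claim \ref{weakstrong} with a direct, quantitative argument ("by using Poincar\'e inequality", details omitted) --- i.e.\ the $\theta$-type hypothesis plus the monotonicity formula and the identity \eqref{virtue} give smallness of $\int_{B_r(x)}\big(|\partial_{r_{x_0}}f|^2+\sum_j|\nabla_{\nu_j}f|^2\big)$, and one then takes $\tilde f$ to be the average of $f$ over the orbits of the radial/translational symmetry and applies Poincar\'e to bound $r^{2-n}\int|f-\tilde f|^2$ by an explicit multiple of $\delta$ --- whereas you argue by compactness and contradiction. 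Your route works precisely because, for maps, bounded energy gives weak $W^{1,2}$ plus strong $L^2$ (Rellich) compactness with no gauge issues; this is exactly the compactness that fails for connections (Subsection \ref{newdiff}), so your proof does not generalize to the setting the paper actually needs, and it produces a non-explicit $\delta$, while the Poincar\'e route gives an effective $\delta\sim\epsilon$. Two of your intermediate steps are superfluous and slightly overstated: you do not need the limit $f$ to be a stationary harmonic map (weak limits of stationary maps need not be stationary, and no harmonicity of the competitor $\tilde f$ is required by the $L^2$-type definition), and you do not need any control of, or symmetry for, the defect measure $\nu$ --- the contradiction only requires that the limit \emph{map} be $k$-symmetric, which follows from $\partial f_i/\partial r_{x_j^i}\to 0$ strongly in $L^2_{\mathrm{loc}}$ of the punctured balls (the weight $|x-x_j^i|^{2-n}$ is bounded below away from the tips and $2\delta_i\to 0$), weak $W^{1,2}$ convergence, and the identity \eqref{virtue} at the $k+1$ effectively spanning points, together with strong $L^2(B_1)$ convergence $f_i\to f$. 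With those simplifications your contradiction argument is sound, and the final inclusion $S^k_{\epsilon,r,L^2}(f)\subseteq S^k_{\delta,r,\theta}(f)$ follows from the contrapositive exactly as you say.
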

By using Poincar\'e inequality it is not hard to check the above claim, and we omit the details. However it is worth noting that the two types of definitions are equivalent in the case of minimizing harmonic maps due to its sequential compactness; see \cite{S96} and \cite{SU82}. On the other hand, by following exactly the same lines of proofs in this paper, we can prove:
\begin{theorem}
\label{nvstrong}
Consider a stationary harmonic map $f:B_{16}(p)\subseteq M\to N$ with $\int_{B_{16}(p)}|\nabla f|^2\le \Lambda$, then for all $\epsilon>0$, $k=0,\cdots,n-1$ there exists $C(M,N,\Lambda,\epsilon)$ such that $\text{Vol}(B_r(S^k_{\epsilon,r,\theta}(f)))\le C(M,N,\Lambda,\epsilon)r^{n-k}$. 
\end{theorem}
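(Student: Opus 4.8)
The plan is to run the same machinery that proves Theorem \ref{maintheorem} for stationary connections, but with the monotone quantity $\theta_A$ replaced by $\theta_f(x,r)=r^{2-n}\int_{B_r(x)}|\nabla f|^2\,dV$, and with $|F_A|^2\,dV$ replaced by the energy measure $|\nabla f|^2\,dV$. The key structural facts that make the argument go through in either setting are: (i) a monotonicity formula for $\theta_f$ and a differentiated form analogous to \eqref{mono} and \eqref{psisph}, expressing the defect $\theta_f(x,\rho)-\theta_f(x,\sigma)$ as a weighted integral of $|\partial_r f|^2$ over the annulus; (ii) weak-$*$ compactness of the energy measures $|\nabla f_i|^2\,dV$ with uniformly bounded mass under $\int_{B_{16}(p)}|\nabla f|^2\le\Lambda$, together with upper semicontinuity of $\theta_f$ under this convergence; (iii) the characterization that $f$ is $\theta$-type $k$-symmetric in $B_r(x)$ precisely when there is a $k$-plane $V_k$ along which $f$ is translation-invariant (equivalently $\iota_{\nu_i}\nabla f\equiv 0$ and $\partial_r f\equiv 0$), the exact analogue of Remark \ref{keyremark}. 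With these three inputs, every step in the proof of Theorem \ref{maintheorem} is formal and transfers verbatim.

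First I would record the monotonicity formula and the $\theta$-type quantitative symmetry and quantitative stratification $S^k_{\epsilon,r,\theta}(f)$ following Definitions \ref{conetipmu}, \ref{keydefmu}, and \ref{quanstrat0}, with $\theta_\mu$ replaced by $\theta_f$. Next I would establish the $L^2$-best approximation theorem in this context: if the energy measure is not $(k+1,\epsilon)$-symmetric at any scale in a ball, then for any finite measure $\mu'$ supported where $\theta_f$ has small drop, the $k$-dimensional plane minimizing the $L^2$-distance to $\Supp\mu'$ controls the second moments $\int d(y,V_k)^2\,d\mu'(y)$ by a constant times the total drop of $\theta_f$ — this is where the differentiated monotonicity formula \eqref{psisph} (in its harmonic-map form) is used, exactly as in Section \ref{l2apth}. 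Then I would feed this into the rectifiable-Reifenberg scheme of Section \ref{recreifen}: a covering/stopping-time argument over dyadic scales, at each stage splitting balls according to whether the pinched set is effectively spanned by a $(k+1)$-plane's worth of near-cone-tips, and summing the packing estimates produced by the Reifenberg-type bound. The cone-splitting lemma — that $(k,\epsilon)$-symmetry at a point plus an extra near-cone-tip off the symmetry plane upgrades to $(k+1,\epsilon')$-symmetry — is again purely a consequence of the monotonicity formula and the elementary identity \eqref{virtue}, so it carries over.

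The main obstacle I expect is not any single new estimate but making precise that the harmonic-map monotonicity formula genuinely supplies the same two ingredients the connection case uses: the quantitative rigidity needed for cone-splitting (drop of $\theta_f$ on an annulus small $\Rightarrow$ $f$ is quantitatively radially invariant there, in an $L^2$ sense on the measure level), and upper semicontinuity of $\theta_f(\cdot,r)$ under weak-$*$ convergence of energy measures. Both are standard for stationary harmonic maps (e.g.\ \cite{L99}, \cite{NV17}), but since we work with the measure $|\nabla f|^2\,dV$ rather than with $f$ itself, I would need to check that the energy-measure formulation of symmetry is stable under the compactness — this is the analogue of Claim \ref{introproof} and would go in the same appendix-style lemma. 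Once that is in place, the constant $C(M,N,\Lambda,\epsilon)$ in the Minkowski estimate $\Vol(B_r(S^k_{\epsilon,r,\theta}(f)))\le C r^{n-k}$ comes out of the Reifenberg packing sum exactly as $C(n,\Lambda,\epsilon)$ does in Theorem \ref{maintheorem}, with the additional dependence on $M,N$ entering only through the geometry constant $K_M$ and the target's geometry in the monotonicity formula's error terms.
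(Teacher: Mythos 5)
Your overall strategy coincides with the paper's: Theorem \ref{nvstrong} is indeed obtained by running the connection-case machinery (quantitative symmetry, $L^2$-best approximation, inductive covering, discrete rectifiable-Reifenberg) with $\theta_A$ replaced by $\theta_f$ and $|F_A|^2dV_g$ by $|\nabla f|^2dV_g$. However, your summary of the key step misstates exactly the point where the $\theta$-type setting differs from \cite{NV17}, and as stated that step would fail. You claim that the $L^2$-best approximation step bounds $\int d^2(x,L^k)\,d\mu$ by a constant times the total drop of $\theta_f$ alone. That is the form of (7.2) of \cite{NV17}, and it rests on the annulus energy lower bound (Lemma 7.2 of \cite{NV17}), whose proof is a compactness/contradiction argument using the stability of $L^2$-type symmetry under $L^2\cap H^1_{\text{weak}}$ convergence. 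For $\theta$-type symmetry this stability fails (energy can concentrate, so $\theta_f$ is not continuous under the weak convergence), and upgrading the ball lower bound to the annulus lower bound is precisely Proposition \ref{fake00}, which is open for general (possibly singular) stationary maps; see Remark \ref{7217}. What one can actually prove is only the ball version (the analogue of Lemma \ref{l2appp}), and consequently the best-approximation estimate carries an extra near-point term, the map analogue of the second term in \eqref{l2app}, namely a suitably scaled integral of $|\langle x-z,\nabla f(z)\rangle|^2$ over $B_{\epsilon_1 r}(x)$.

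Because of this, the argument is not a verbatim transfer of \cite{NV17}: one must additionally show that this extra term, summed over dyadic scales, is controlled. This is the computation \eqref{dyaest}, which uses the monotonicity formula a second time to bound the dyadic sum by $C(n,\Lambda)\epsilon_1^{\,m}$ for a positive power $m$ (with the exponent adjusted to the harmonic-map normalization $\theta_f(x,r)=r^{2-n}\int_{B_r(x)}|\nabla f|^2$), after which one fixes $\epsilon_1=\epsilon_1(n,\Lambda,\epsilon)$ small so that the hypothesis of the discrete rectifiable-Reifenberg Theorem \ref{DRR} is still verified at every stage of the induction. This is the crucial new ingredient that makes Theorem \ref{nvstrong} a strengthening of Theorem 1.3 of \cite{NV17} (cf. Claim \ref{weakstrong}) rather than a corollary of it, and your outline, by asserting the drop-only bound, skips it. If you replace that assertion with the weaker estimate of Theorem \ref{l2ap} (in its harmonic-map form) and add the dyadic-summability argument, the rest of your plan (cone-splitting/quantitative dimension reduction, the inductive covering lemma, and the Reifenberg packing estimates) goes through as you describe.
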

\begin{remark}\label{extradiff}
From Claim \ref{weakstrong}, we see Theorem \ref{nvstrong} strengthens the conclusion of Theorem 1.3 of \cite{NV17}. Meanwhile, this causes new technical difficulties; more detailed discussions on this will be given in the next subsection as well as Section \ref{l2apth}.  
\end{remark}

\subsection{Outline of the proof of Theorems \ref{maintheorem}, \ref{maintheorem1}, and \ref{maintheorem2}}\label{outlinerl}
Recall our main task is to obtain both the volume estimates of $\text{Vol}(B_r(S^k_{\epsilon,r}))$ and the rectifiability of $S^k_{\epsilon}$. For these purposes we shall apply the rectifiable-Reifenberg theorem, an original and difficult work established in \cite{NV17}, as well as an $L^2$-best approximation theorem, which allows us to apply the rectifiable-Reifenberg theorem. Now let us elaborate them in details. Roughly speaking, the rectifiable-Reifenberg theorem allows one to obtain the $H^k$-measure control together with the $k$-rectifiability of a set $S$, by assuming for $H^k$-a.e. $x\in S$, the scaling invariant $L^2$-distance between $S\cap B_{r}(x)$ and $L^k\cap B_r(x)$ (for some $k$-plane $L^k$) is summable over all dyadic scales $r=2^{-\alpha}$, and the sum itself, as a function on $S$, is small in the $H^k\rvert_S$-integral average sense. More precisely, let us first define $D^k_S(x,r)=\inf_{L^k\subseteq \mathbb{R}^n}r^{-(k+2)}\int_{B_r(x)}d^2(y,L^k)dH_S^k(y)$. The rectifiable-Reifenberg theorem says that if we know
\begin{equation}
\label{allscale0}
\int_{S\cap B_r(x)}\bigg{(}\int_0^rD_{H^k_S}^k(y,s)\frac{ds}{s}\bigg{)}\ dH^k(y)\le \delta(n)^2r^k,
\end{equation}
for some small $\delta(n)$ and all $B_r(x)\subseteq B_2(p)$, then we have $H^k(S\cap B_r(x))\le (1+\epsilon)\omega_kr^k$ for all $x\in S\cap B_1(p)$, and further $S\cap B_1(p)$ is $k$-rectifiable; see Theorem \ref{RR}. The authors of \cite{NV17} also proves a discrete version of rectifiable-Reifenberg which applies to discrete Dirac measures $\mu=\sum_jr_j^k\delta_{x_j}$, which says if one knows
\begin{equation}
\label{allscale00}
\sum_{\alpha\in \mathbb{N}^+,2^{-\alpha}\le 2r}\int_{B_r(x)}D_{\mu}^k(y,2^{-\alpha}) d\mu(y)\le \delta^2r^k.
\end{equation}
then $\sum_j r_j^k\le D(n)$; see Theorem \ref{DRR}. Clearly, in order to achieve the Minkowski volume estimates by using Theorems \ref{RR} and \ref{DRR}, we need to verify that \eqref{allscale0} and \eqref{allscale00} hold for the quantitative stratifications $S^k_{\epsilon}$, $S^k_{\epsilon,r}$ on all balls. In reality the procedure is subtler than what is said here, since we will actually build inductive covers of the quantitative stratifications and apply the discrete rectifiable-Reifenberg theorem to the discrete Dirac measures associated to these covers at each stage in order to keep track of the volume estimates. But for now let us focus on how to control $D_{\mu}^k(y,s)$ in order to apply Theorems \ref{RR}, \ref{DRR}, instead of being concerned about the details of the inductive coverings. In Section \ref{l2apth}, we will prove the so called $L^2$-best approximation theorem (an analogue of Theorem 7.1 of \cite{NV17}) that estimates for us the quantity $D_{\mu}^k(y,s)$, where $\mu$ is a discrete Dirac measure whose support is contained in the target set $S^k_{\epsilon,r}$. Vaguely speaking, the $L^2$-best approximation theorem characterizes how well the support of $\mu$ can be approximated by a $k$-plane in the $L^2$-sense, by using the properties of the connection $A$. More precisely, if $A$ is $(0,\delta)$-symmetric in $B_{8r}(p)$ but not $(k+1,\epsilon)$-symmetric in $B_r(p)$, then the $L^2$-best approximation theorem tells us for all finite measure $\mu$ the following holds:
\begin{eqnarray}
\begin{split}
\label{l2app12}
D_{\mu}^k(p,r)=&r^{-2-k}\inf_{L^k}\int_{B_r(p)}d^2(x,L^k)d\mu(x)\\
\le& C(n,\Lambda,\epsilon)r^{-k}\int_{B_r(p)} |\theta_A(x,8r)-\theta_A(x,\epsilon_1r)|dV_g(z)\\
&+ C(n,\Lambda,\epsilon)r^{2-n-k}\int_{B_r(p)}\int_{B_{\epsilon_1 r}(x)}|\iota_{x-z}F_A|^2(z)dV_g(z)d\mu(x).
\end{split}
\end{eqnarray}
where $\epsilon_1$ could be an arbitrary constant for now, but will be specified later (in Section \ref{cutoff}) such that $\epsilon_1<\epsilon_1(n,\Lambda,\epsilon)$ for some sufficiently small $\epsilon_1(n,\Lambda,\epsilon)$. In view of Definition \ref{conetip}, the $L^2$-best approximation theorem quantitatively generalizes the phenomenon that ``when a cone is not close to being $k+1$ symmetric, then all of its cone tips locate close to some $k$-dimensional plane'' (see Remark \ref{quangene}). Further we point out that as a trade-off to adopting the stronger definition of quantitative symmetry, an extra second term appears on the right hand side of \eqref{l2app12} compared to (7.2) of \cite{NV17}. For the reason why the extra term persists, see Remark \ref{7217}. To illustrate how we obtain the quantitative stratification estimates, let us consider the following set as an easy example
\begin{eqnarray}
\begin{split}
\tilde{S}^k_{\epsilon,r}=\{x\in S^k_{\epsilon,r}\cap B_1(p): \sup_{y\in B_r(x)}\theta_A(y,\eta r)>E-\eta \}.
\end{split}
\end{eqnarray}
where $E=\sup_{y\in B_1(p)}\theta_A(y,1)$. Choose a Vitali cover of $\tilde{S}^k_{\epsilon,r}$ by $\{B_{5r}(x_i)\}_i$ with $\theta_A(x_i,\eta r)>E-\eta$, and then set $\mu=\sum_i r^k\delta_{x_i}$. We will now estimate $\text{Vol}(B_r(\tilde{S}^k_{\epsilon,r}))$ by applying Theorems \ref{l2ap} and \ref{RR} to the measure $\mu$. Before we start let us remark that the estimate of $\text{Vol}(B_r(S^k_{\epsilon,r}))$ follows very similar strategy, though the proof is more complicated involving a partition technique for a cover and an inductive covering construction. For the purposes of our outline, in this subsection we simply carry out the estimate for $\tilde{S}^k_{\epsilon,r}$. For simplicity sake let us set $|\theta_A(x,8r)-\theta_A(x,\epsilon_1r)|=W_r(x)$ (for some small $\epsilon_1$ to be determined later in the proof). We shall prove (inductively on $\alpha$) that $\mu(B_s(x))\le D(n)s^k$ holds on all $B_s(x)$ with $s\le 2^{-\alpha}$. The beginning stage (i.e. $2^{-\alpha_*}=r$) is trivial by the Vitali cover property. Next, assume this holds for all scales $s$ below $2^{-\alpha_0}$ for some $\alpha_0$. We need to show it also holds for $\alpha_0-1$. On the one hand, in view of the discrete rectifiable-Reifenberg, this immediately follows if we could show
\begin{eqnarray}
\begin{split}
\label{drrl2}
&s^{-k}\sum_{2^{-\alpha}\le 2s}\int_{B_s(x)}D_{\mu}^k(y,2^{-\alpha}) d\mu(y)\le \delta(n)^2,\ \text{for all } s\le 2^{-\alpha_0+2}.
\end{split}
\end{eqnarray}
On the other hand, by applying the $L^2$-best approximation theorem with $r=2^{-\alpha}$ for each $2^{-\alpha}\le 2s$, taking sum of \eqref{l2app} over all such $\alpha$s, and then integrating, one has 
\begin{eqnarray}
\begin{split}
\label{drrl3}
&s^{-k}\sum_{2^{-\alpha}\le 2s}\int_{B_s(x)}D_{\mu}^k(y,2^{-\alpha}) d\mu(y)\le \mathcal{W}+\mathcal{E},
\end{split}
\end{eqnarray}
where
\begin{eqnarray}
\begin{split}
\label{drrl4}
&\mathcal{W}=Cs^{-k}\int_{B_s(x)}\sum_{2^{-\alpha}\le s}W_{2^{-\alpha}}(z)d\mu(z),\\ 
&\mathcal{E}=Cs^{-k}\int_{B_s(x)} \sum_{2^{-\alpha}\le s}(2^{-\alpha})^{2-n}\int_{B_{\epsilon_1 2^{-\alpha}}(z)}|\iota_{z-x}F_A(y)|^2 dV_g(y)  d\mu(z).
\end{split}
\end{eqnarray}
In obtaining \eqref{drrl3}, we have used the inductive assumption that $\mu(B_s(x))\le D(n)s^k$ for all $x\in B_1(p)$ and $s\le 2^{-\alpha_0}$, in order to cancel out the $2^{-k\alpha}$ factors on the right hand side of \eqref{l2app12} where we have set $r=2^{-\alpha}$ for each $2^{-\alpha}\le 2s$. Notice that $\mathcal{E}$ arises from the extra second term in \eqref{l2app12}. However, by further exploiting the monotonicity formula \eqref{mono} we show (in \eqref{dyaest} of Section \ref{cutoff}) that the extra second term in \eqref{l2app} is actually summable (over dyadic scales), and further $\mathcal{E}$ could be made less than $\delta(n)^2/2$ by choosing $\epsilon_1\equiv\epsilon_1(n,\Lambda,\epsilon)$ to be small enough from the very beginning of the proof. In estimating $\mathcal{W}$, in the authors of \cite{NV17} made the key observation that the energy drop function $W_{2^{-\alpha}}(x)$ is summable over $\alpha$. Then, by the fact that $\theta_A(x_i,\eta r)>E-\eta$ we can make $\mathcal{W}$ less than $\delta(n)^2/2$ by choosing $\eta<\eta(n,\Lambda,\epsilon)$. We could now conclude \eqref{drrl2} from Theorem \ref{DRR}, and hence finishes the induction. Finally, by taking $\alpha=0$ and using the Vitali cover property, we obtain $\text{Vol}(B_r(\tilde{S}^k_{\epsilon,r}))\le C(n)$ for some $C(n)\gg D(n)$.

To sum up, even though \eqref{l2app12} is weaker than (7.2) of \cite{NV17}, the new error is summable and the sum could be made so small that we could still achieve $r^{-k}\sum_{2^{-\alpha}\le 2r}\int_{B_r(x)}D_{\mu}^k(y,2^{-\alpha}) d\mu(y)\le \delta(n)^2$ in order to apply Theorem \ref{DRR} to obtain the Minkowski volume estimates. This is the most crucial new point of this paper.

%\addtocontents{toc}{\protect\setcounter{tocdepth}{2}}

\section{Rectifiable-Reifenberg theorem}\label{recreifen}
In this section, we will state the rectifiable-Reifenberg theorems proved in \cite{NV17}, a key tool that allows us to obtain quantitative stratification estimates together with the rectifiability. Let us begin with the following definition:
\begin{definition}
\label{l2flat}
Let $\mu$ be a measure in $B_2$ with $r>0$ and $k\in\mathbb{N}$. We define
$$D^k_{\mu}(x,r)=\inf_{L^k\subseteq \mathbb{R}^n}r^{-(k+2)}\int_{B_r(x)}d^2(y,L^k)d\mu(y)$$
when $\mu(B_r(x))\ge \epsilon_nr^k\equiv(1000n)^{-7n^2}r^k$ and $D^k_{\mu}(x,r)\equiv 0$ otherwise, where the infimum is taken over all $k$-dimensional affine subspaces $L^k\subseteq \mathbb{R}^n$.
\end{definition}
\begin{remark}
In literature $D^k_{\mu}$ is usually referred to as Jone's number $\beta_2$.
\end{remark}

\begin{theorem}[rectifiable-Reifenberg \cite{NV17}]\label{RR}
Let $S\subseteq B_2\subseteq \mathbb{R}^n$ be a $H^k$-measurable set. Set $\mu_S\eqqcolon H^k\rvert_{S}$. If for all $ \epsilon>0$, there exists $\delta(n,\epsilon)$, such that for all $B_r(x)\subseteq B_2$ with $H^k(S\cap B_r(x))\ge \epsilon_nr^k$ the following holds
\begin{equation}
\label{allscale}
\int_{S\cap B_r(x)}\bigg{(}\int_0^rD_{\mu_S}^k(y,s)\frac{ds}{s}\bigg{)}\ dH^k(y)\le \delta^2r^k,
\end{equation}
then the following hold\\
(1) for any $B_r(x)\subseteq B_1$ with $x\in S$, $H^k(S\cap B_r(x))\le (1+\epsilon)\omega_kr^k$,\\
(2) $S\cap B_1$ is $k$-rectifiable.
\end{theorem}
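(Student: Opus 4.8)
Since this is cited as an established result from \cite{NV17}, my plan would be to indicate the proof strategy rather than reproduce the whole argument. The core idea is a Reifenberg-type iterative construction: at each dyadic scale one builds a bi-Lipschitz graph approximation of $S\cap B_r(x)$ over the best-approximating $k$-plane $L^k$ realizing the infimum in $D^k_{\mu_S}$, and the summability hypothesis \eqref{allscale} controls the total distortion accumulated across all scales. First I would fix $B_r(x)\subseteq B_1$ with $x\in S$ and, after rescaling, reduce to $r=1$. At each scale $2^{-j}$ I would choose a maximal $2^{-j}$-separated net $\{x_{j,i}\}$ in $S$, select $k$-planes $L_{j,i}$ nearly realizing $D^k_{\mu_S}(x_{j,i},2^{-j})$, and glue these local planes using a partition of unity to produce maps $\sigma_j$ whose images $\Sigma_j$ are smooth $k$-manifolds. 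The displacement $\|\sigma_{j+1}-\sigma_j\|$ at a point is controlled pointwise by $\sum_i D^k_{\mu_S}(x_{j,i},2^{-j})^{1/2}$ over nearby net points; the key is that the hypothesis \eqref{allscale} bounds the $\ell^2$-sum of these quantities (weighted by $\mu_S$-mass) rather than merely the $\ell^\infty$-sum, which is exactly what is needed to get a \emph{rectifiable} (not merely topological) Reifenberg limit.

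The heart of the argument, and the step I expect to be the main obstacle, is the \textbf{packing/Ahlfors-regularity estimate}: one must show that the $\mu_S$-mass $H^k(S\cap B_s(y))$ does not exceed $(1+\epsilon)\omega_k s^k$, and this has to be proved \emph{simultaneously} with the graph construction, by a delicate induction on scale. The subtlety is that controlling the measure requires the graph approximation to be bi-Lipschitz with constant close to $1$, while the bi-Lipschitz bound in turn uses the upper mass bound to sum the scale-to-scale errors; \cite{NV17} resolve this circularity by a careful simultaneous induction, absorbing error terms and using the smallness of $\delta(n,\epsilon)$ to close the loop. A secondary technical point is handling the ``bad'' net points where the local mass drops below $\epsilon_n (2^{-j})^k$ (so that $D^k_{\mu_S}\equiv 0$ by convention): there the set is too sparse to carry $k$-dimensional mass at that scale, and one must verify these regions contribute negligibly to both the packing count and the graph distortion.

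Once the simultaneous induction yields the uniform bound $H^k(S\cap B_r(x))\le (1+\epsilon)\omega_k r^k$ for all such balls, conclusion (1) is immediate. For conclusion (2), the maps $\sigma_j$ converge uniformly (by the summability of $\|\sigma_{j+1}-\sigma_j\|_\infty$, which follows from \eqref{allscale} via Cauchy-Schwarz and the just-established mass bound) to a bi-Lipschitz map $\sigma_\infty$ whose image $\Sigma_\infty$ is a bi-Lipschitz $k$-manifold. One then shows $\mu_S$-almost every point of $S\cap B_1$ lies on one of these Reifenberg limits built at appropriate base points and scales: at a point $y$ where the density $\Theta^k(\mu_S,y)$ exists and the tangent structure is well-behaved (which is $H^k$-a.e.\ $y$ by the density bounds and standard measure theory), the local pieces $\Sigma_\infty$ cover a positive-density portion of $S$ near $y$, and a covering argument then exhausts $S\cap B_1$ up to an $H^k$-null set by countably many bi-Lipschitz $k$-manifolds, giving $k$-rectifiability. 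I would remark that the full details, including the explicit dependence $\delta=\delta(n,\epsilon)$ and the verification that the construction is genuinely rectifiable-Reifenberg rather than classical topological Reifenberg, are carried out in \cite{NV17} and are used here as a black box.
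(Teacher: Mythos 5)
The paper gives no proof of Theorem \ref{RR} at all: it is quoted verbatim from \cite{NV17} and used as a black box, which is exactly the stance you take, and your sketch of the Naber--Valtorta strategy (dyadic graph approximations over best-fit planes, simultaneous induction coupling the bi-Lipschitz distortion with the $(1+\epsilon)\omega_k r^k$ mass bound, and summability of the displacements via \eqref{allscale}) is a fair account of their argument. So your proposal is consistent with the paper's treatment; nothing further is required here.
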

We also present a version of above theorem that is more discrete in nature:
\begin{theorem}[Discrete rectifiable-Reifenberg \cite{NV17}]\label{DRR}
Let $\{B_{r_j}(x_j)\}_{x_j\in S}$ be a collection of disjoint balls and let $\mu=\sum_jr_j^k\delta_{x_j}$ be the associated measure. Then there exists $\delta(n)$ and $D(n)$, such that if for all $B_r(x)\subseteq B_2$ with $\mu(B_r(x))\ge \epsilon_nr^k$ the following holds
\begin{equation}
\label{allscale1}
\sum_{\alpha\in \mathbb{N}^+,2^{-\alpha}\le 2r}\int_{B_r(x)}D_{\mu}^k(y,2^{-\alpha}) d\mu(y)\le \delta^2r^k.
\end{equation}
then we have $\sum_jr_j^k\le D(n).$
\end{theorem}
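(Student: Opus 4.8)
The statement is quoted verbatim from \cite{NV17}, so the plan is to recall the Reifenberg-type construction that underlies it. The proof is an induction over dyadic scales organized around a single \emph{covering step}: I would show that for every $B_r(q)\subseteq B_2$ on which the hypothesis \eqref{allscale1} holds, the set $\Supp(\mu)\cap B_r(q)$ can be covered by countably many balls $\{B_{s_i}(q_i)\}_i$ with $\sum_i s_i^k\le (1+C(n)\delta)\,\omega_k\,r^k$, where each $B_{s_i}(q_i)$ is either one of the original balls $B_{r_j}(x_j)$ (a ``terminal'' ball) or has $s_i\le r/2$ and again satisfies \eqref{allscale1}. Granting this, one iterates the covering step: the resulting balls cover $\Supp(\mu)$, and because the original balls $\{B_{r_j}(x_j)\}$ are disjoint the terminal balls do not overcount, so summing the packing bounds over the iteration gives $\sum_j r_j^k\le D(n)$ provided $\delta=\delta(n)$ is chosen small. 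Thus everything reduces to the covering step.

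For the covering step I would follow \cite{NV17} essentially line by line. Fix $B_r(q)$ and let $L^0$ be a $k$-plane realizing $D^k_\mu(q,r)$. Descending through the dyadic scales $2^{-\alpha}\le 2r$, one selects on each relevant sub-ball a best-approximating $k$-plane, and the $L^2$-size of the tilt between the planes at consecutive scales is dominated by the Jones numbers $D^k_\mu(\cdot,2^{-\alpha})$; the hypothesis \eqref{allscale1} therefore makes the accumulated tilt small in the $\mu$-integrated $L^2$ sense. This is precisely what is needed to assemble a map $\sigma$ from (a large subset of) $L^0\cap B_r(q)$ whose graph is a $W^{1,2}$, hence Lipschitz, $k$-submanifold with Lipschitz constant $1+C\delta$ and $H^k$-measure at most $(1+C\delta)\omega_k r^k$, and which contains the ``good'' part of $\Supp(\mu)$: the points that stay uniformly close to the running planes and lie in regions where $\mu$ is not too spread out. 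The ``bad'' part --- points where $\mu$ fails a lower Ahlfors-type bound at some scale, or where the running planes jump --- is covered by balls whose squared radii sum to at most $C\delta\cdot r^k$, again by \eqref{allscale1} together with a Chebyshev argument; these are the sub-balls into which one recurses. Packing $\Supp(\mu)$ along the Lipschitz graph and adding the bad balls produces the stated $(1+C(n)\delta)\omega_k r^k$ bound.

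The main obstacle is this covering step, and inside it the need for the \emph{sharp} packing constant $\omega_k$ rather than a generic dimensional constant: since the iteration runs through infinitely many scales, any fixed multiplicative loss per scale would compound and destroy the final bound $D(n)$. Securing the sharp constant requires (i) a $W^{1,2}$ estimate bounding the approximating graph map purely by the $\mu$-integrated Jones numbers with only a dimensional factor in front; (ii) a careful good/bad decomposition of $\Supp(\mu)$ at \emph{every} scale, with the bad contribution absorbed into the $C\delta$ error and never into the main term; and (iii) the disjointness of $\{B_{r_j}(x_j)\}$, which is what makes the terminal balls genuinely pack. These ingredients are the technical core of \cite{NV17}; in the present paper they are used as a black box, the surrounding material serving only to verify, via the $L^2$-best-approximation estimate of Subsection \ref{outlinerl}, that \eqref{allscale1} does hold for the measures $\mu$ built from covers of $S^k_{\epsilon,r}(A)$.
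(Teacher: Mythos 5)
The paper does not prove Theorem \ref{DRR} at all: it is imported verbatim from \cite{NV17} and used as a black box, which is exactly the stance you take in your final paragraph, so your approach coincides with the paper's. Your sketch of the underlying multi-scale construction (best planes, Jones numbers controlling tilts, $W^{1,2}$/Lipschitz graph approximation, good/bad decomposition, packing of the disjoint terminal balls) is a fair summary of the argument in \cite{NV17}; since this paper contains no proof of the statement, there is nothing further to compare it against.
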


\section{$L^2$-best approximation theorem}
\label{l2apth}
In this section, we will prove Theorem \ref{l2ap}, the $L^2$-best approximation Theorem, the other key ingredient for proving the main theorems. On the one hand, this is an analogue of Theorem 7.1 of \cite{NV17}, with the same purposes of estimating the quantity $D^k_{\mu}(x,r)$ in order to apply the rectifiable-Reifenberg theorems. As a matter of fact, the structure of this section is similar to that of Section 7 of \cite{NV17}. On the other hand, as we have seen from the outline in Subsection \ref{outlinerl}, subtle differences lie in the estimates. This will be discussed in more details throughout this section. From now on, let connection $A$ be the same as in Theorem \ref{maintheorem}.

\subsection{Energy decay function}
For all $x\in S^k_{\epsilon,r}(A)$ we introduce an energy decay function $W_r(x)$ (similar to that defined in (7.1) of \cite{NV17}) as follows:
\begin{eqnarray}
\begin{split}
\label{w2drop}
W_r(x)\eqqcolon\theta_{A}(x,8r)-\theta_{A}(x,\epsilon_1 r),
\end{split}
\end{eqnarray}
where $\epsilon_1\equiv\epsilon_1(n,\Lambda,\epsilon)$ will be determined in the proof of the main theorem in the next section.

\subsection{$L^2$-best approximation theorem}\label{majordiff}
The main result of this section is the following theorem (c.f. Theorem 7.1 of \cite{NV17}):
\begin{theorem}[$L^2$-best approximation theorem]\label{l2ap}
Let $A$ be a stationary connection with
$$\int_{B_{16r}(p)}|F_A|^2 dV_g\le \Lambda.$$
Then for all $\epsilon$, there exists constants $\delta\equiv\delta(n,\Lambda,\epsilon)$ and $C(n,\Lambda,\epsilon)$, if $K_M\le\delta(n,\Lambda,\epsilon)$, and $A$ is $(0,\delta)$-symmetric in $B_{8r}(p)$ but not $(k+1,\epsilon)$-symmetric in $B_r(p)$, then for all finite measure $\mu$ in $B_r(p)$ the following holds:
\begin{eqnarray}
\begin{split}
\label{l2app}
D_{\mu}^k(p,r)&=r^{-2-k}\inf_{L^k}\int_{B_r(p)}d^2(x,L^k)d\mu(x)\\
&\le C(n,\Lambda,\epsilon)r^{-k}\int_{B_r(p)}\bigg{(}W_r(x)+ r^{2-n}\int_{B_{\epsilon_1 r}(x)}|\iota_{x-z}F_A|^2(z)dV_g(z)\bigg{)}d\mu(x).
\end{split}
\end{eqnarray}
where the infimum is taken over all $k$-dimensional affine planes $L^k\subseteq \mathbb{R}^n$. 
\end{theorem}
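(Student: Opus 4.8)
The plan is to follow the strategy of Theorem 7.1 of \cite{NV17}, adapting it to the $\theta$-type setting where the curvature $F_A$ plays the role of $\nabla f$. First I would reduce to the normalized scale $r=1$, $p=0$. The key principle is the one summarized in Remark \ref{quangene}: if $A$ is not $(k+1,\epsilon)$-symmetric in $B_1(0)$, then the points $x$ which are ``almost cone tips'' (i.e.\ with $W_1(x) + \int_{B_{\epsilon_1}(x)}|\iota_{x-z}F_A|^2$ small) cannot effectively span a $(k+1)$-plane; hence they must lie close to a $k$-plane, and this closeness must be controlled quantitatively by exactly those two quantities. To make this rigorous in $L^2$, I would study the ``bilinear form'' $b_{ij} = \int_{B_1(0)} (x_i - x_{\mathrm{cm}})(x_j - x_{\mathrm{cm}}) \, d\mu(x)$ built from the measure $\mu$ (centered at its center of mass $x_{\mathrm{cm}}$). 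The best $k$-plane for $D^k_\mu(0,1)$ is spanned by the top $k$ eigenvectors of $b$, and $D^k_\mu(0,1)$ equals (up to the normalization) the sum of the remaining $n-k$ eigenvalues. So the task becomes: show that the $(k+1)$-st eigenvalue $\lambda_{k+1}$ of $b$ is bounded by $C\int_{B_1}\big(W_1(x)+\int_{B_{\epsilon_1}(x)}|\iota_{x-z}F_A|^2\big)d\mu(x)$.

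To get this eigenvalue bound I would argue by contradiction against the non-$(k+1,\epsilon)$-symmetry. Suppose $\lambda_{k+1}(b)$ is large compared to the right-hand side. Then $\mu$ genuinely spreads out in $k+1$ independent directions, so one can choose points $x_0,\dots,x_{k+1}$ in $\mathrm{supp}\,\mu$ (or nearby) that $\tau(n)$-effectively span a $(k+1)$-plane $V_{k+1}$ with respect to $B_1(0)$, and with each $x_i$ having $W_1(x_i)$ and $\int_{B_{\epsilon_1}(x_i)}|\iota_{x_i-z}F_A|^2$ small (a Chebyshev/pigeonhole argument on $\mu$, using that the integral against $\mu$ of the bad quantity is small while $\lambda_{k+1}$ being large forces $\mu$-mass to be well-distributed in many directions — this is where the effective-spanning selection lemma, analogous to Lemma 7.4 of \cite{NV17}, enters). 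But $W_1(x_i)$ small together with the radial-curvature control $\int_{B_{\epsilon_1}(x_i)}|\iota_{x_i-z}F_A|^2$ small is precisely the statement that $x_i$ is an $(\epsilon,1)$-cone tip of $A$ (by the monotonicity formula \eqref{mono} and Remark \ref{iotaremark}): indeed $\theta_A(x_i,8)-\theta_A(x_i,\epsilon_1)$ small controls $\int |\iota_{\partial r_{x_i}}F_A|^2$ on the annulus, and the extra ball term handles the region near $x_i$ where $|x-x_i|^{4-n}$ is not bounded below — this is the role of the second term and why it cannot be dropped (cf.\ Remark \ref{7217}). Hence $A$ would be $(k+1,\epsilon)$-symmetric in $B_1(0)$ via $V_{k+1}$, a contradiction.

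To convert the contradiction into the quantitative linear estimate \eqref{l2app}, I would run the standard compactness/dimension-reduction packaging: suppose the inequality fails along a sequence $A_j$ with constants $C_j\to\infty$; after passing to weak-$*$ limits of $|F_{A_j}|^2 dV$ (this is the crucial place where the $\theta$-type definition pays off — only weak-$*$ compactness of Radon measures is needed, no Uhlenbeck gauge fixing) and of the measures $\mu_j$, one obtains a limiting measure $\mu_\infty$ whose support is non-degenerate in $k+1$ directions yet all of whose points are strict cone tips of the limiting Radon measure; by Claim \ref{introproof}/the equivalence with Definition \ref{symdef1} this forces $(k+1)$-symmetry in the limit, and a standard upgrade (rigidity of the monotone quantity $\theta$ at cone tips, plus the identity \eqref{virtue} to transfer radial symmetry between nearby tips) pushes this back to $(k+1,\epsilon)$-symmetry of $A_j$ for large $j$, contradicting the hypothesis. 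The main obstacle I expect is the careful bookkeeping of the second (ball) error term: one must verify that the near-diagonal region $|x-z|<\epsilon_1 r$ is exactly what obstructs the naive estimate $\int_{B_{\epsilon_1}(x)}^{B_8(x)} |x-z|^{4-n}|\iota_{\partial r_x}F_A|^2 \lesssim W_1(x)$ from controlling the full radial curvature, and that including this extra term both closes the gap in the compactness argument and remains summable over dyadic scales when later fed into the rectifiable-Reifenberg machinery; getting the powers of $\epsilon_1$ and the dependence $\delta(n,\Lambda,\epsilon)$ consistent between this theorem and Section \ref{cutoff} is the delicate part.
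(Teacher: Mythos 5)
Your first paragraph matches the paper's setup exactly: reduce to $r=1$, diagonalize the second-moment form of $\mu$, and reduce \eqref{l2app} to bounding $\lambda_{k+1}$. The gap is in how you then bring in the non-$(k+1,\epsilon)$-symmetry. Both of your proposed mechanisms are soft contradiction arguments, and neither can produce the linear ratio estimate $\lambda_{k+1}\le C(n,\Lambda,\epsilon)\int(W_1+\cdot)\,d\mu$. First, largeness of $\lambda_{k+1}$ \emph{relative to the right-hand side} does not mean $\mu$ spreads in $k+1$ directions at the definite scale $\tau(n)$ demanded by Definition \ref{keydef}: one can have $\lambda_{k+1}\sim 10^{-12}$ with the right-hand side even smaller, and then no Chebyshev selection of points of $\mathrm{supp}\,\mu$ can $\tau(n)$-effectively span a $(k+1)$-plane, so no contradiction with non-symmetry is available. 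Second, the compactness packaging fails for the same reason: along a failure sequence with $C_j\to\infty$ one only knows $\lambda_{k+1}^{(j)}>C_j\cdot\mathrm{RHS}_j$, and both sides may tend to zero (e.g. $\lambda_{k+1}^{(j)}=2^{-j}$, $\mathrm{RHS}_j=4^{-j}$); the weak-$*$ limit then carries no contradiction, since the limit measure may well be supported in a $k$-plane. Moreover the paper explicitly warns (Remark \ref{7217}) that the contradiction/compactness route of Lemma 7.2 of \cite{NV17} does not carry over to the $\theta$-type definition, because $\theta$-type quantitative symmetry is not preserved under the relevant weak convergence; your proposal leans on exactly that route.

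What the paper does instead, and what is missing from your outline, is a direct two-step argument. (i) A pointwise linear-algebra identity: since the center of $\mu$-mass is $0$, $\lambda_j\,\iota_{\nu_j}F_A(z)=\int_{B_1}\langle x,\nu_j\rangle\,\iota_{x-z}F_A(z)\,d\mu(x)$ for every $z\in B_4$; Cauchy--Schwarz, integration in $z$ over all of $B_4$ (not an annulus), and the monotonicity formula \eqref{mono} split $\int_{B_4}|\iota_{x-z}F_A|^2dV_g(z)$ into an annulus part controlled by $W_1(x)$ and the small-ball part $\int_{B_{\epsilon_1}(x)}|\iota_{x-z}F_A|^2$ --- this is where both terms of \eqref{l2app} arise, and your intuition about the role of the second term is correct but is the easy part. (ii) The crucial quantitative energy lower bound (Lemma \ref{l2appp}): under $(0,\delta)$-symmetry of $B_8$ and non-$(k+1,\epsilon)$-symmetry of $B_1$, \emph{every} orthonormal $(k+1)$-frame satisfies $\sum_{i=1}^{k+1}\int_{B_4}|\iota_{\nu_i}F_A|^2\ge\delta$; dividing by this $\delta$ converts step (i) into the bound on $\lambda_{k+1}$. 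Lemma \ref{l2appp} is itself proved not by compactness but by inserting the explicit cutoff vector fields $X_{\tau,s}=\xi((x+\tau e_{i_0})/s)\partial z_{i_0}$ into the stationarity equation \eqref{sta}, showing that small directional energy forces the translated centers $-\tfrac12 e_{i_0}$ to be $(\epsilon,1)$-cone tips, whence $(k+1,\epsilon)$-symmetry --- a contradiction in contrapositive form. Your proposal never invokes the stationarity equation at all, and without it (or some substitute for Lemma \ref{l2appp}) the eigenvalue bound, and hence \eqref{l2app}, does not follow.
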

\begin{remark}\label{quangene}
When $A$ is a strict cone, the $L^2$-distance on the left hand side \eqref{l2app} could be improved as follows:
\begin{claim}
If $A$ is $0$-symmetric in $B_8(p)$ but not $(k+1,\epsilon)$-symmetric in $B_1(p)$, then there exists a $k$-plane $V$ and $\epsilon_1<\epsilon_1(n,\Lambda,\epsilon)$, such that for all $\lambda>0$ we have
\begin{eqnarray}
\begin{split}
\{x\in B_1(p): d(x,V)^2>\epsilon_1^{-1}\lambda \}\subseteq \{x\in B_1(p): W_{1}(x)>\lambda \}.
\end{split}
\end{eqnarray}
\end{claim}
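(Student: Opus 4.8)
The plan is to prove the contrapositive statement pointwise: fix any $x \in B_1(p)$ with $W_1(x) \le \lambda$, and show $d(x,V)^2 \le \epsilon_1^{-1}\lambda$ for a suitable fixed $k$-plane $V$. So the first step is to produce the plane $V$. Since $A$ is $0$-symmetric in $B_8(p)$ but not $(k+1,\epsilon)$-symmetric in $B_1(p)$, I want to let $V$ be a maximal-dimensional plane through (a neighborhood of) the cone vertex along which $A$ is translation-invariant; concretely, using Remark \ref{keyremark}, $A$ being $0$-symmetric means $\iota_{\partial r_0}F_A \equiv 0$, and I take $V = \{v : \iota_v F_A \equiv 0 \text{ on } B_1\}$, the ``spine'' of the cone. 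By the failure of $(k+1,\epsilon)$-symmetry together with the effective-spanning formulation in Definition \ref{keydef}(2), $\dim V \le k$; enlarge $V$ to an exactly $k$-dimensional plane if necessary. The point of choosing $V$ this way is that for $v \in V$, the identity \eqref{virtue} shows $\partial r_x$ and $\partial r_0$ differ (after scaling) by a vector in $V$, so $\iota_{\partial r_x} F_A \equiv 0$ for every $x$ on $V$, i.e. every $x \in V$ is a strict cone tip, hence $W_1(x) = 0$.

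The second and main step is the quantitative estimate: if $x \in B_1(p)$ and $W_1(x)$ is small, then $x$ must be close to $V$. This is where the contradiction/compactness machinery of Section \ref{l2apth} enters — in fact this claim is really a packaged special case of the proof of Theorem \ref{l2ap} restricted to a single point. I would argue by contradiction: suppose there is a sequence of stationary connections $A_i$, each $0$-symmetric in $B_8(p)$ but not $(k+1,\epsilon)$-symmetric in $B_1(p)$, and points $x_i \in B_1(p)$ with $W_1^{A_i}(x_i) \to 0$ but $d(x_i, V_i)^2 \ge \epsilon_1^{-1}\lambda_i$ with $\lambda_i$ not going to zero fast enough — more precisely, normalize so that $d(x_i,V_i)^2 / W_1^{A_i}(x_i) \to \infty$. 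Passing to weak-$*$ limits of $|F_{A_i}|^2 dV$ (here only Radon-measure compactness is needed, as emphasized in the introduction), one gets a limiting measure $\mu_\infty$ which is still $0$-symmetric in $B_8$, and by upper semicontinuity of $\theta$ under weak-$*$ convergence, the limit point $x_\infty$ of $x_i$ satisfies $W_1^{\mu_\infty}(x_\infty) = 0$, so $x_\infty$ is a strict cone tip of $\mu_\infty$; combined with the existing $0$-symmetry this forces $\mu_\infty$ to be invariant along the line $0x_\infty$, so $x_\infty$ lies in the spine of $\mu_\infty$, which one checks has dimension $\le k$ and is the limit of the $V_i$. That contradicts $d(x_i, V_i)$ staying bounded away from the spine. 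Making the dependence on $\epsilon_1$ explicit (i.e. extracting the linear bound $d(x,V)^2 \le \epsilon_1^{-1} W_1(x)$ rather than just ``small implies small'') is done by the usual blow-up/rescaling: divide by $W_1(x)$ and take the limit of the rescaled excess, exactly as in the derivative/second-variation estimates underlying Theorem \ref{l2ap}.

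Concretely, I expect the cleanest route is to avoid a fresh contradiction argument and instead invoke Theorem \ref{l2ap} itself with a well-chosen measure. Applying \eqref{l2app} with $r=1$ and $\mu = \delta_x$, the Dirac mass at a single point $x \in B_1(p)$, gives
\begin{equation}
d(x, L^k)^2 \le C(n,\Lambda,\epsilon)\Big( W_1(x) + \int_{B_{\epsilon_1}(x)} |\iota_{x-z} F_A|^2(z)\, dV_g(z) \Big)
\end{equation}
for the best $k$-plane $L^k$. When $A$ is a strict cone, the second ``error'' term is itself controlled by the monotonicity/energy-drop mechanism described after \eqref{l2app12} (the term is summable and, restricted to a ball of radius $\epsilon_1$ around a near-tip, is bounded by a constant multiple of $W_1(x)$ — this is precisely the inequality \eqref{dyaest} alluded to in Section \ref{cutoff}), so it can be absorbed into $W_1(x)$ at the cost of worsening the constant, which for a strict cone one can track to be of the form $\epsilon_1^{-1}$. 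Taking $V$ to be this best plane $L^k$ (which for a strict cone coincides with the spine, by the rigidity in the equality case) then yields $d(x,V)^2 \le \epsilon_1^{-1} \lambda$ whenever $W_1(x) \le \lambda$, which is the claimed inclusion.

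The main obstacle is the equality/rigidity step: turning ``$W_1(x)$ small'' into ``$d(x,V)$ small with the sharp linear rate $\epsilon_1^{-1}$'', and identifying the best-approximating plane with the genuine spine $V$ of the cone. For a strict cone this rigidity is clean — $W_1(x)=0$ forces $\iota_{\partial r_x} F_A \equiv 0$, hence by \eqref{virtue} invariance along $0x$, hence $x$ in the spine — but propagating this to a quantitative statement requires either the blow-up argument above (with its compactness bookkeeping for Radon measures and the upper semicontinuity of $\theta$) or a careful accounting inside the proof of Theorem \ref{l2ap} of how the constant degrades when one only uses a single Dirac mass and a strict-cone hypothesis. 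I would present it via the second route, citing Theorem \ref{l2ap} and the dyadic summability estimate \eqref{dyaest}, since that keeps the argument short and self-contained within the paper's framework.
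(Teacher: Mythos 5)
First, note that the paper does not actually spell out a proof of this claim: it only says the argument combines the monotonicity formula \eqref{mono} with elementary Euclidean geometry and omits the details, so the question is whether your argument stands on its own. Your Step 1 is fine: taking $V$ to contain the spine $\{v:\iota_vF_A\equiv 0\}$, observing via \eqref{virtue} that affine translates of $p$ along spine directions are strict cone tips, and deducing $\dim(\mathrm{spine})\le k$ from the failure of $(k+1,\epsilon)$-symmetry is correct. The problem is the quantitative heart of the claim. The statement ``for all $\lambda>0$'' is equivalent to the pointwise linear bound $W_1(x)\ge \epsilon_1\, d(x,V)^2$ on $B_1(p)$ with one fixed plane $V$, and neither of your routes delivers this. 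Your preferred route, applying Theorem \ref{l2ap} with $\mu=\delta_x$, is vacuous: for a Dirac mass the left side $\inf_{L^k}\int d^2(\cdot,L^k)\,d\delta_x$ is zero (any $k$-plane through $x$ is optimal), so \eqref{l2app} gives no information, and there is no ``rigidity in the equality case'' forcing the optimizing plane to be the spine --- it is simply not unique and varies with $x$, whereas the claim requires a single $V$ working for every $x$ and every $\lambda$ simultaneously. Moreover, your absorption of the error term is wrong as cited: \eqref{dyaest} bounds $\int_{B_{\epsilon_1}(x)}|\iota_{x-z}F_A|^2\,dV_g$ by $C(n,\Lambda)\epsilon_1^{n-2}$, an $x$-independent constant, not by a multiple of $W_1(x)$; since the claim must hold as $\lambda\to 0$ (points with $W_1(x)=0$ must lie on $V$ exactly), an additive error that does not vanish with $W_1(x)$ cannot be absorbed ``at the cost of worsening the constant.''

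The compactness route has its own gaps: the contradiction hypothesis must negate ``there exists a plane $V$,'' i.e.\ the failure is quantified over all planes, which your setup does not handle; the normalization $d(x_i,V_i)^2/W_1(x_i)\to\infty$ is compatible with $d(x_i,V_i)\to 0$, so the limiting picture produces no contradiction; the spines $V_i$ of $A_i$ need not converge to the spine of the limit measure (symmetries can be created in the limit); and ``divide by $W_1(x)$ and take the limit of the rescaled excess'' has no meaning here --- there is no linear structure to blow up, and the proof of Theorem \ref{l2ap} is a direct second-moment argument, not a blow-up, so soft compactness can only give a ``small implies small'' statement, never the linear rate. A workable direct argument, consistent with the paper's hint, is the following: by \eqref{mono} and \eqref{virtue}, for the strict cone one has $W_1(x)=4\,d(p,x)^2\int_{A_{\epsilon_1,8}(x)}|z-x|^{2-n}|\iota_{\nu_x}F_A|^2\,dV_g$ with $\nu_x=\frac{x-p}{|x-p|}$; consider the quadratic form $Q(w)=\int_{B_4(p)}|\iota_wF_A|^2\,dV_g$, let $V$ be spanned by its $k$ lowest eigendirections (through $p$), and note that strict $0$-symmetry makes $A$ $(0,\delta)$-symmetric for every $\delta$, so Lemma \ref{l2appp} bounds the $(k+1)$-st eigenvalue below by $\delta/(k+1)$; since $d(x,V)=d(p,x)\sin\angle(\nu_x,V)$, this yields $W_1(x)\gtrsim \delta\, d(x,V)^2$ once the contribution of $B_{\epsilon_1}(x)$ is controlled, producing simultaneously the fixed plane and the linear rate. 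As written, your proposal does not close this step.
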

The proof uses the monotonicity formula \eqref{mono} together with very elementary observations from Euclidean geometry, and we omit the details.
\end{remark}

\subsection{Energy lower bound}
\begin{lemma}\label{l2appp}
Let all conditions be the same as in Theorem \ref{l2ap}; then for all $\epsilon$, there exists $\delta\equiv\delta(n,\Lambda,\epsilon)$ such that if $K_M\le\delta(n,\Lambda,\epsilon)$, and moreover $A$ is $(0,\delta)$-symmetric in $B_{8r}(p)$ but not $(k+1,\epsilon)$-symmetric in $B_r(p)$, then for any $k+1$ dimensional subspace $V$ spanned by orthonormal basis $\nu_1,\cdots,\nu_{k+1}$ the following holds:
\begin{equation}\label{directsmall0}
\sum_{i=1}^{k+1}\int_{B_{4r}(p)}|\iota_{\nu_i}F_A|^2(x)dV_g(x)\ge r^{n-4}\delta.
\end{equation}
\end{lemma}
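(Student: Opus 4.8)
The plan is to argue by contradiction using a compactness argument based on the weak-$*$ convergence of the curvature measures. Suppose the conclusion fails: then there exists $\epsilon>0$, a sequence of stationary connections $A_i$ on $B_{16r_i}(p_i)$ with $\int|F_{A_i}|^2\le\Lambda$ and $K_{M_i}\to 0$, such that $A_i$ is $(0,1/i)$-symmetric in $B_{8r_i}(p_i)$ but not $(k+1,\epsilon)$-symmetric in $B_{r_i}(p_i)$, and yet there is a $(k+1)$-dimensional subspace $V_i$ spanned by orthonormal $\nu_1^{(i)},\cdots,\nu_{k+1}^{(i)}$ with $\sum_{j}\int_{B_{4r_i}(p_i)}|\iota_{\nu_j^{(i)}}F_{A_i}|^2 < r_i^{n-4}/i$. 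After rescaling so that $r_i=1$ and translating $p_i$ to the origin, the measures $\mu_i=|F_{A_i}|^2 dV_{g_i}$ have uniformly bounded mass on $B_{16}(0)$ and hence subconverge weak-$*$ to a Radon measure $\mu$ on $\mathbb{R}^n$; the metrics $g_i$ converge to the Euclidean metric. Passing to a further subsequence we may assume $V_i\to V$, a $(k+1)$-plane, and $\nu_j^{(i)}\to\nu_j$ an orthonormal basis of $V$.

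The first key step is to pass the symmetry hypotheses to the limit. From the $(0,1/i)$-symmetry of $A_i$ in $B_8(0)$ and the monotonicity formula \eqref{mono}, the quantity $\theta_{\mu}(0,s)$ is constant in $s$ on $(0,8]$, so $0$ is a strict cone tip of $\mu$ (this is essentially the content of Remark \ref{iotaremark} and Remark \ref{keyremark} at the level of measures, combined with the lower semicontinuity/continuity of the truncated energies under weak-$*$ convergence away from boundary spheres, which follows from \eqref{mono}). The vanishing $\sum_j\int_{B_4(0)}|\iota_{\nu_j^{(i)}}F_{A_i}|^2\to 0$ forces, after using that the integrand is (up to lower-order metric corrections) a continuous function paired against $|F_{A_i}|^2 dV_{g_i}$, the translation invariance $T_v^*\mu=\mu$ for all $v\in V$ — here one uses the identity \eqref{virtue} relating $\iota_{\nu_j}$ and $\iota_{\partial r}$ at nearby base points exactly as in Remark \ref{keyremark}, to upgrade ``$\iota_{\nu_j}F=0$'' together with ``$0$ is a cone tip'' into full $V$-invariance of $\mu$. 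Thus $\mu$ is $(k+1)$-symmetric with respect to $V$ in the sense of Definition \ref{symdef1}/Definition \ref{keydefmu}(1).

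The second key step is to contradict the failure of $(k+1,\epsilon)$-symmetry. Since $\mu$ is $(k+1)$-symmetric with respect to $V$ and $V\cap B_{1/10}(0)\ni 0$, we may pick $k+1$ points $x_0=0,x_1,\cdots,x_{k+1}$ on $V\cap B_{1/4}(0)$ that $\tau(n)$-effectively span $V$ with respect to $B_1(0)$, and by $V$-invariance each $x_j$ is a strict cone tip of $\mu$, so in particular $\theta_{\mu}(x_j,2)-\theta_{\mu}(x_j,2\epsilon)=0$, i.e. each $x_j$ is an $(\epsilon,1)$-cone tip of $\mu$. The final point is to transfer this back to $A_i$: because the map $x\mapsto\theta_{\mu}(x,s)$ behaves well under weak-$*$ limits (continuity of $\theta_{A_i}(x_j^{(i)},s)$ in $i$ for fixed $s$, again via \eqref{mono} controlling the boundary-sphere mass), one shows that for $i$ large the points $x_j^{(i)}\to x_j$ are $(\epsilon,1)$-cone tips of $A_i$ and still $\tau(n)$-effectively span $V_i$ — hence $A_i$ is $(k+1,\epsilon)$-symmetric in $B_1(0)$ for all large $i$, contradicting the choice of $A_i$.

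The main obstacle I expect is the careful handling of the weak-$*$ convergence at the level of the \emph{truncated} and \emph{weighted} energies: the quantities $\theta_{A_i}(x,s)$ involve integrals over balls whose boundary spheres could in principle carry mass in the limit, and the radial integrand $|\iota_{\partial r_x}F|^2$ weighted by $|x|^{4-n}$ is singular at the center. Both issues are, however, exactly controlled by the monotonicity formula \eqref{mono} (which bounds the annular radial energy and forces upper semicontinuity of $\theta$ with the right one-sided continuity), so the technical content is to run these convergence statements precisely — in particular checking that ``$(0,\delta_i)$-symmetric'' with $\delta_i\to 0$ really does pass to ``strict cone tip of $\mu$'' and that the effective-spanning condition is open under the convergence $x_j^{(i)}\to x_j$. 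A secondary technical point is keeping the metric errors $K_{M_i}\to 0$ from interfering with the identity \eqref{virtue}, which is an Euclidean identity; this is why the hypothesis $K_M\le\delta(n,\Lambda,\epsilon)$ is imposed and it enters only through routine perturbation estimates.
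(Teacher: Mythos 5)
Your strategy is genuinely different from the paper's. The paper proves the contrapositive directly and quantitatively: it inserts the translated cutoff vector fields $X_{\tau,s}(x)=\xi\big(\tfrac{x+\tau e_{i_0}}{s}\big)\partial z_{i_0}$ into the stationarity identity \eqref{sta}, obtains $\big|\tfrac{d}{d\tau}\int |F_A|^2\xi\big|\le C(sa)^{-1}\sqrt{\Lambda\delta}$, and integrates in $\tau$ at the two scales $s=2$ and $s\approx\epsilon$ to conclude that $0,-\tfrac12 e_1,\ldots,-\tfrac12 e_{k+1}$ are $(\epsilon,1)$-cone tips, hence that $A$ is $(k+1,\epsilon)$-symmetric; this gives an explicit $\delta(n,\Lambda,\epsilon)$ and never passes to a limit. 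Your compactness route is not unreasonable: it is the same toolkit the paper itself uses in Appendix B for Theorem \ref{quandimred}, and by phrasing the limit as the Radon measure $\mu$ (defect included) rather than a limit connection you sidestep the failure of persistence of $\theta$-type symmetry that Remark \ref{7217} warns about; also, your use of the full-ball hypothesis (rather than an annulus) is exactly what keeps you clear of the obstruction behind \eqref{directsmall00} and Proposition \ref{fake00}.

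There is, however, a gap at the crux step. The claim that $\sum_j\int_{B_4}|\iota_{\nu_j^{(i)}}F_{A_i}|^2\to0$ forces $T_v^*\mu=\mu$ ``because the integrand is a continuous function paired against $|F_{A_i}|^2dV_{g_i}$'' is false: $|\iota_{\nu_j}F_{A_i}|^2$ is not a fixed continuous density times $|F_{A_i}|^2$, so its smallness is not by itself information about the weak-$*$ limit. Two repairs work. (a) Use the first variation \eqref{sta} with $X=\phi\,\nu_j$, $\phi\in C^\infty_c(B_4)$: Cauchy--Schwarz bounds the curvature term by $C\|\nabla\phi\|_\infty\sqrt{\Lambda}\big(\int_{B_4}|\iota_{\nu_j}F_{A_i}|^2\big)^{1/2}\to0$, hence $\int\partial_{\nu_j}\phi\,d\mu=0$, i.e.\ $V$-invariance of $\mu$ in $B_4$ --- this is exactly the paper's computation run qualitatively. (b) Use \eqref{virtue} quantitatively: for $x_j=\bar x_0+\tfrac12\nu_j$ and $2\epsilon\le\sigma<\rho\le 2$ one has $|\iota_{\partial r_{x_j}}F|^2\le 2\sigma^{-2}\big(d(\bar x_0,z)^2|\iota_{\partial r_{\bar x_0}}F|^2+|\iota_{\nu_j}F|^2\big)$ on $A_{\sigma,\rho}(x_j)$; away from $\bar x_0$ the first term is controlled by the drop $\theta_{A_i}(\bar x_0,8)-\theta_{A_i}(\bar x_0,8\delta_i)$ via the weight comparison $d^2\le 8^{n-2}d^{4-n}$, while inside $B_{8\delta_i}(\bar x_0)$ --- where monotonicity gives no smallness of the unweighted energy (e.g.\ when $n=4$ all of $\Lambda$ may sit there) --- you are saved only because the coefficient $d(\bar x_0,z)^2\le(8\delta_i)^2$ is tiny. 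This point is not covered by your appeal to Remark \ref{keyremark}, which concerns exact vanishing. Note that once (b) is written out it bounds the drop of $\theta_{A_i}$ at the translated points directly, so the limit measure becomes unnecessary and your proof essentially collapses to a direct argument parallel to the paper's. With either repair (plus the boundary-sphere and moving-center bookkeeping you already flag, and being careful that the $(0,\delta)$-cone tip need not sit at the center, an issue the paper also normalizes away), your argument goes through.
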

\begin{remark}\label{7217}
This lemma is a counterpart of Lemma 7.2 of \cite{NV17}. Nevertheless, in place of an estimate similar to (7.3) of \cite{NV17} which may look like
\begin{equation}\label{directsmall00}
\sum_{i=1}^{k+1}\int_{A_{3r,4r}(p)}|\iota_{\nu_i}F_A|^2(x)dV_g(x)\ge r^{n-4}\delta^{\prime},
\end{equation}
we achieve a weaker estimate \eqref{directsmall0}, and this weaker estimate is exactly what leads to the extra second term on the right hand side of \eqref{l2app}. We now elucidate why this difference occurs. Firstly, let us point out that the contradiction argument used in the proof of Lemma 7.2 of \cite{NV17} does not carry over if the quantitative symmetry is $\theta$-type instead. This is because while $L^2$-type quantitative symmetry is preserved under the $L^2\cap H^1_{\text{weak}}$ convergence, $\theta$-type is not. Next, it is not hard to check that strengthening \eqref{directsmall0} to \eqref{directsmall00} is equivalent to proving the following proposition (for the readers' convenience we state it for both stationary connections and stationary maps):
\begin{proposition}\label{fake00}
Let $A$ (resp. $f$) be a stationary connection (resp. map) with $\int_{B_{16}(p)}|F_A|^2<\Lambda$ (resp. $\int_{B_{16}(p)}|\nabla f|^2<\Lambda$). Then for all $\delta^{\prime}$, there exists $\delta$, such that if $A$ (resp. $f$) is $\theta$-type $(0,\delta)$-symmetric in $B_{8r}(p)$ and 
$$\sum_{i=1}^{k+1}\int_{A_{3r,4r}(p)}|\iota_{\nu_i}F_A|^2(x)dV_g(x)\le r^{n-4}\delta,\ (\text{resp. } \sum_{i=1}^{k+1}\int_{A_{3r,4r}(p)}|\nabla_{\nu_i}f|^2(x)dV_g(x)\le r^{n-4}\delta,)$$ 
then 
$$\sum_{i=1}^{k+1}\int_{B_{4r}(p)}|\iota_{\nu_i}F_A|^2(x)dV_g(x)\le r^{n-4}\delta^{\prime},\ (\text{resp. } \sum_{i=1}^{k+1}\int_{B_{4r}(p)}|\nabla_{\nu_i}f|^2(x)dV_g(x)\le r^{n-4}\delta.)$$
\end{proposition}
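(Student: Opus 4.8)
The plan is to argue by contradiction, in exactly the style of Lemma 7.2 of \cite{NV17} and the contradiction arguments that pervade this paper. Suppose the proposition fails. Then there is $\delta'>0$ and a sequence of stationary connections $A_i$ (we treat the connection case; the map case is identical) with $\int_{B_{16}(p)}|F_{A_i}|^2 \le \Lambda$, which are $\theta$-type $(0,\delta_i)$-symmetric in $B_{8r}(p)$ with $\delta_i\to 0$, satisfying $\sum_{i'=1}^{k+1}\int_{A_{3r,4r}(p)}|\iota_{\nu_{i'}^{(i)}}F_{A_i}|^2\,dV_g\le r^{n-4}\delta_i$, yet $\sum_{i'=1}^{k+1}\int_{B_{4r}(p)}|\iota_{\nu_{i'}^{(i)}}F_{A_i}|^2\,dV_g\ge r^{n-4}\delta'$ for a fixed $(k+1)$-plane with orthonormal basis $\{\nu_{i'}^{(i)}\}$; passing to a further subsequence we may assume the planes converge, so $\nu_{i'}^{(i)}\to\nu_{i'}$ spanning a fixed $(k+1)$-plane $V$. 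After rescaling assume $r=1$. By the uniform energy bound, the Radon measures $\mu_i\eqqcolon|F_{A_i}|^2\,dV_g$ subconverge weak-$*$ to a limit Radon measure $\mu$ on $B_{16}(p)$, and by the monotonicity formula \eqref{mono} (applied to each $A_i$, with the error term controlled by $K_M$) the densities $\theta_{\mu_i}(x,s)$ pass to the limit appropriately, so $\theta_\mu(x,s)=s^{4-n}\mu(B_s(x))$ is monotone and the $\theta$-type $(0,\delta_i)$-symmetry with $\delta_i\to 0$ forces $\mu$ to be a strict cone tip at $p$: $\theta_\mu(p,s)$ is constant on $0<s\le 16$.

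The heart of the argument is to upgrade the weak-$*$ convergence of $|F_{A_i}|^2\,dV_g$ to convergence of the radial (and $\nu_{i'}$-contracted) components. The key identity is the monotonicity formula in the form \eqref{psisph}, or more directly \eqref{mono}: for each $A_i$ the quantity $\int_{A_{\sigma,\rho}(p)}|x-p|^{4-n}|\iota_{\partial r_p}F_{A_i}|^2\,dV_g$ equals the energy drop $\theta_{A_i}(p,\rho)-\theta_{A_i}(p,\sigma)$, which tends to $0$ because $A_i$ is $(0,\delta_i)$-symmetric with $\delta_i\to 0$. Hence $\iota_{\partial r_p}F_{A_i}\to 0$ in $L^2_{\text{loc}}(B_{16}\setminus\{p\})$ with the radial weight, and in particular $\iota_{\partial r_p}F_{A_i}\,dV_g\to 0$ as measures on every annulus $A_{\sigma,\rho}(p)$. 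Now I want the analogous statement for the directions $\nu_{i'}$. Here one uses the elementary vector identity \eqref{virtue}: for $y\in A_{3,4}(p)$ one has $d(p,y)\partial r_p - |y_0|\,\nu_{i'} = d(y_0,y)\partial r_{y_0}$ for the appropriate base point $y_0$ on $V$, so $\iota_{\nu_{i'}}F_{A_i}$ is a fixed linear combination of $\iota_{\partial r_p}F_{A_i}$ and $\iota_{\partial r_{y_0}}F_{A_i}$ with bounded coefficients on $A_{3,4}(p)$. The hypothesis $\sum_{i'}\int_{A_{3,4}(p)}|\iota_{\nu_{i'}^{(i)}}F_{A_i}|^2\to 0$ together with $\iota_{\partial r_p}F_{A_i}\to 0$ then forces $\iota_{\partial r_{y_0}}F_{A_i}\to 0$ in $L^2(A_{3,4}(p))$, which says that $\mu$ — being also the weak limit and a strict cone at $p$ — acquires a \emph{second} cone tip $y_0\in V$; more precisely one shows the energy drop $\theta_{A_i}(y_0,\rho)-\theta_{A_i}(y_0,\sigma)$ over scales entirely inside the annulus tends to $0$. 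Combined with the cone structure at $p$, the splitting principle (Remark \ref{keyremark}, the identity \eqref{virtue}) propagates translation invariance along the full line through $p$ and $y_0$, and iterating over $k+1$ independent directions $\nu_1,\dots,\nu_{k+1}$ shows $\mu$ is $(k+1)$-symmetric with respect to $V$.

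Finally one must transfer this back to contradict the quantitative failure. Because $\mu$ is $(k+1)$-symmetric with respect to $V$, we have $\iota_{\nu_{i'}}F$ vanishing in the measure sense, i.e. $\sum_{i'}\int_{B_{4}(p)}|\iota_{\nu_{i'}^{(i)}}F_{A_i}|^2\,dV_g\to 0$ — this is where the weak-$*$ convergence of the directional energy densities, established in the previous step on annuli and extended across the singular set using that $\{p\}$ and lower strata have zero $\mu$-measure in the relevant sense (or simply that the radial-type quantities control everything by \eqref{virtue}), gives the limit $0$, contradicting the lower bound $r^{n-4}\delta'$. The main obstacle is precisely the passage in the previous paragraph: weak-$*$ convergence of $|F_{A_i}|^2\,dV_g$ does \emph{not} a priori give convergence of the signed/contracted quantities $\iota_{\nu}F_{A_i}$, and the whole point (explained in Remark \ref{7217}) is that $\theta$-type symmetry is not preserved under weak convergence, so one cannot simply quote lower semicontinuity. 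The resolution is to route everything through the radial contraction $\iota_{\partial r}F$, for which the monotonicity formula \eqref{mono} gives genuine (weighted $L^2$) convergence on annuli, and then to use the algebraic identity \eqref{virtue} to express all the $\iota_{\nu_{i'}}F$ quantities — on the relevant annular regions — in terms of radial contractions from shifted centers. One must be careful that \eqref{virtue} only holds away from the centers, so the argument naturally produces control on annuli $A_{3,4}(p)$ rather than on the full ball $B_4(p)$; bridging from annular control to ball control is exactly what requires the cone structure of the limit (a cone is determined by its restriction to any annulus), and this is the step that cannot be made quantitative without extra regularity — which is why the paper settles for the weaker \eqref{directsmall0} and absorbs the defect into the extra error term in \eqref{l2app}.
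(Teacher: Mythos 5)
First, a point of comparison: the paper does not prove Proposition \ref{fake00} at all. It appears inside Remark \ref{7217} precisely as the statement whose validity is \emph{unknown} for general stationary connections (resp.\ stationary maps) with only $|F_A|\in L^2$ (resp.\ $f\in W^{1,2}$); the author only asserts, without proof, that it can be shown under extra smoothness hypotheses. So you are attempting to close a gap the paper explicitly leaves open, and your argument founders exactly where the paper says the difficulty sits. The fatal step is the passage ``$\iota_{\partial r_{y_0}}F_{A_i}\to 0$ in $L^2(A_{3,4}(p))$ $\Rightarrow$ $\mu$ acquires a second cone tip at $y_0$, hence is $(k+1)$-symmetric.'' The monotonicity formula \eqref{mono} converts radial $L^2$-smallness into a density drop at $y_0$ only over annuli \emph{centered at $y_0$} that lie inside $A_{3,4}(p)$: if $t=|y_0-p|\ge 1/2$ there are no such annuli (so for your choice of $y_0$ at unit distance the clause ``energy drop over scales entirely inside the annulus'' is vacuous), and if $t<1/2$ you get only the single window of scales $[3+t,4-t]$. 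A vanishing drop on one window does not make $y_0$ a cone tip: that requires constancy of $\theta_\mu(y_0,\cdot)$ down to scale $0$, and the hypothesis gives no information whatsoever inside $B_3(p)$. Consequently the ``splitting principle'' (two cone tips imply translation invariance) has nothing to act on. Nor is there a soft substitute: weak-$*$ convergence of $|F_{A_i}|^2dV_g$ together with $\int_{A_{3,4}(p)}|\iota_\nu F_{A_i}|^2\to 0$ does \emph{not} force the limit measure to be translation invariant on that annulus --- oscillating/concentrating sequences can have a limiting energy density that varies in the $\nu$-direction while the $\nu$-component tends to $0$ strongly --- and without Uhlenbeck compactness there is no limit connection to which \eqref{ksymmequiv} could be applied. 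The same confusion recurs at the end: even granting $(k+1)$-symmetry of $\mu$, ``weak-$*$ convergence of the directional energy densities'' is not available, and $\int_{B_4(p)}|\iota_\nu F_{A_i}|^2\to 0$ would again have to be extracted from \eqref{mono} at shifted centers, with the small balls around those centers handled separately.

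A concrete check that the outline cannot be complete as written: it never uses $n\ge 5$, yet for $n=4$ the connection statement is false. Take a Yang--Mills instanton rescaled to concentrate at scale $\lambda\ll\delta$ at $p$: it is $\theta$-type $(0,\delta)$-symmetric in $B_8(p)$ (essentially all its energy lies in $B_{16\delta}(p)$, so the drop $\theta_A(p,16)-\theta_A(p,16\delta)$ is $O(\lambda^4\delta^{-4})$), it satisfies $\int_{A_{3,4}(p)}|\iota_\nu F_A|^2=O(\lambda^4)$, and yet $\int_{B_4(p)}|\iota_\nu F_A|^2$ remains a fixed positive fraction of $8\pi^2$. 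Thus any correct proof must use that bounded density forbids point concentration when $n\ge 5$, and, more importantly, must convert the purely annular hypothesis into control inside $B_3(p)$ by a genuine propagation argument (e.g.\ dilations about $p$ to move the window of scales at points of $V$ near $p$, an annulus-localized analogue of \eqref{psisph} to upgrade density-drop information to radial invariance of $\mu$ --- note \eqref{psisph} itself involves full balls through its cross term --- and a treatment of possible concentration of $\mu$ near the axis). None of this appears in the proposal, and it is exactly the content the author flags in Remark \ref{7217} as apparently requiring extra regularity; the paper's own response is to forgo \eqref{directsmall00}, prove only the weaker Lemma \ref{l2appp} directly from the stationarity identity, and absorb the resulting defect into the second term of \eqref{l2app}. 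As it stands, the proposal does not prove the proposition.
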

While we can prove this proposition under the extra assumption of $A$ being a smooth Yang-Mills connection (resp. $f$ being a smooth harmonic map), it is not known whether it holds for the general stationary connections (resp. stationary maps) possibly having singularities; indeed, enough regularity (e.g. at most codim-$5$ (resp. codim-$3$) singularities) appears necessary in the proof. In our current situation in which $|F_A|\in L^2$ (resp. $f\in W^{1,2}$) is the only regularity we have, \eqref{directsmall0} is by far the best estimate we are able to obtain.
\end{remark}

\begin{proof}[Proof of Lemma \ref{l2appp}]
Throughout the proof, for convenience we will fix $r=1$ and $p=0^n\in \mathbb{R}^n$. Let $\delta\equiv\delta(n,\epsilon,\Lambda)$ be a small number and will be specified later. To conclude the lemma, we shall prove the contraposition of Lemma \ref{l2appp}. Namely, our goal is to show that if $A$ is $(0,\delta)$-symmetric in $B_8(0)$, and for some $k+1$ subspace through $p$ spanned by an orthonormal basis given by $\{\partial z_i\}_{i=1}^{k+1}$, such that the following holds:
\begin{equation}\label{directsmall}
\sum_{i=1}^{k+1}\int_{B_4(0)}|\iota_{\partial_{z_i}}F_A|^2(x)dV_g(x)< \delta.
\end{equation}
Then $A$ is $(k+1,\epsilon)$-symmetric in $B_1(0)$. This proves such choice of $\delta$ suffices to conclude lemma \ref{l2appp}. Moreover, by the condition that $A$ is $(0,\delta)$-symmetric in $B_8(0)$, for convenience let us assume w.l.o.g. that
\begin{eqnarray}
\begin{split}
\label{1eqngele}
8^{4-n}\int_{B_8(0)} |F_A|^2dV_g-(8\delta)^{4-n}\int_{B_{8\delta}(0)} |F_A|^2 dV_g\le \delta.
\end{split}
\end{eqnarray}

Let $\xi(x)$ be a smooth function that equals to $1$ on $B_1(0)$ and equals to $0$ outside $B_{1+a}(0)$, where $a\equiv a(n,\Lambda,\epsilon)$ will be determined later. By choosing a nicely behaving $\xi $ we could assume that $|\nabla\xi|\le 2 a^{-1}$. Fix any $i_0\in\{1,\cdots,k+1\}$. Let us denote by $e_{i_0}$ the point on the ${i_0}^{th}$ axis with $|e_{i_0}|=1$. For all $s\le2$ and $\tau\in[0,1]$ we set
$$X_{\tau,s}(x)\eqqcolon \xi(\frac{x+\tau e_{i_0}}{s})\partial z_{i_0}.$$
Obviously for the prescribed choices of $s$ and $\tau$, the vector field $X_{\tau,s}(x)$ is smooth and compactly supported in $B_{16}(0)$. Let us inserting $X_{\tau,s}$ into the stationarity equation \eqref{sta} to obtain
\begin{eqnarray}
\begin{split}
\label{stat}
\int_{B_{16}(0)} \bigg{(}|F_A|^2divX_{\tau,s}-4\sum_{l,j=1}^n \langle F_A(\nabla_{\partial z_{l}}X_{\tau,s},\partial z_{j}),F_A(\partial z_{l},\partial z_{j})\rangle \bigg{)} dV_g=0.
\end{split}
\end{eqnarray}
where $\partial z_{1},\cdots,\partial z_{n}$ forms an orthonormal basis on $\mathbb{R}^n$. Next, we compute
\begin{eqnarray}
\begin{split}
\label{divx}
\text{div}X_{\tau,s}&=\frac{\partial }{\partial z_{i_0}}[\xi(\frac{x+\tau e_{i_0}}{s})]\\
&=\frac{d}{d\tau}[\xi(\frac{x+\tau e_{i_0}}{s})],
\end{split}
\end{eqnarray}
\begin{eqnarray}
\begin{split}
\label{covx}
\nabla_{\partial z_{l}}X_{\tau,s}&=\nabla_{\partial z_{l}}[\xi(\frac{x+\tau e_{i_0}}{s})\partial z_{i_0}]\\
&=s^{-1}\frac{\partial\xi}{\partial z_l}(\frac{x+\tau e_{i_0}}{s})\partial z_{i_0}.
\end{split}
\end{eqnarray}
Insert \eqref{divx} and \eqref{covx} into \eqref{stat}, we obtain
\begin{eqnarray}
\begin{split}
\label{stati}
&\frac{d}{d\tau}\bigg{(}\int_{B_{(1+a)s}(-\tau e_{i_0})} |F_A|^2\xi(\frac{x+\tau e_{i_0}}{s}) dV_g\bigg{)}\\
=&4\sum_{l,j=1}^n\int_{B_{(1+a)s}(-\tau e_{i_0})} s^{-1}\frac{\partial\xi}{\partial z_l}\langle F_A(\partial z_{i_0},\partial z_{j}),F_A(\partial z_{l},\partial z_{j})\rangle  dV_g;
\end{split}
\end{eqnarray}
applying Cauchy-Schwarz inequality to the right hand side of \eqref{stati}, we have the estimate
\begin{eqnarray}
\begin{split}
\label{statbd}
&\bigg{|}\frac{d}{d\tau}\bigg{(}\int_{B_{(1+a)s}(-\tau e_{i_0})} |F_A|^2\xi(\frac{x+\tau e_{i_0}}{s}) dV_g\bigg{)}\bigg{|}\\
\le& C(sa)^{-1}\bigg{(}\int_{B_{(1+a)s}(-\tau e_{i_0})} |\iota_{\partial z_{i_0}}F_A|^2  dV_g\bigg{)}^{1/2}\bigg{(}\int_{B_{(1+a)s}(-\tau e_{i_0})} |F_A|^2  dV_g\bigg{)}^{1/2}\\
\le& C(sa)^{-1}\sqrt{\Lambda}(\int_{B_4(0)} |\iota_{\partial z_{i_0}}F_A|^2  dV_g)^{1/2}\\
\le& C(sa)^{-1}\sqrt{\Lambda\delta}.
\end{split}
\end{eqnarray}
In \eqref{statbd}, let us take $s$ to be $2$ and $\frac{\epsilon}{1+a}$ respectively and then integrate from $0$ to $\frac{1}{2}$ with respect to $d\tau$. Using the fundamental theorem of calculus, we have
\begin{eqnarray}
\begin{split}
\label{edropat2}
\bigg{|}2^{4-n}\int_{B_{2(1+a)}(-\frac{1}{2} e_{i_0})} |F_A|^2\xi(\frac{x+\frac{1}{2} e_{i_0}}{2}) dV_g -2^{4-n}\int_{B_{2(1+a)}(0)} |F_A|^2\xi(\frac{x}{2}) dV_g   \bigg{|}\le C(n,\Lambda)a^{-1}\sqrt{\delta},
\end{split}
\end{eqnarray}
\begin{eqnarray}
\begin{split}
\label{edropatep}
\bigg{|}\epsilon^{4-n}\int_{B_{\epsilon}(-\frac{1}{2} e_{i_0})} |F_A|^2\xi(\frac{1+a}{\epsilon}(x+\frac{1}{2} e_{i_0})) dV_g -\epsilon^{4-n}\int_{B_{\epsilon}(0)} |F_A|^2\xi(\frac{1+a}{\epsilon}x) dV_g   \bigg{|}\le C(n,\Lambda,\epsilon)a^{-1}\sqrt{\delta}.
\end{split}
\end{eqnarray}
On the other hand, by choosing $\delta\le\frac{\epsilon}{100}$ and using the fact that $\xi(x)$ equals to $1$ on $B_1(0)$ and equals to $0$ outside $B_{1+a}(0)$, we have
\begin{eqnarray}
\begin{split}
\label{4eqngele}
(2(1+a))^{4-n}\int_{B_{2(1+a)}(0)} |F_A|^2\xi(\frac{x}{2}) dV_g &\le 8^{4-n}\int_{B_8(0)} |F_A|^2dV_g,\\
(\frac{\epsilon}{1+a})^{4-n}\int_{B_{\epsilon}(0)} |F_A|^2\xi(\frac{1+a}{\epsilon}x) dV_g   & \ge (8\delta)^{4-n}\int_{B_{8\delta}(0)} |F_A|^2 dV_g,\\
2^{4-n}\int_{B_{2(1+a)}(-\frac{1}{2} e_{i_0})} |F_A|^2\xi(\frac{x+\frac{1}{2} e_{i_0}}{2}) dV_g &\ge 2^{4-n}\int_{B_2(-\frac{1}{2} e_{i_0})} |F_A|^2dV_g,\\
\epsilon^{4-n}\int_{B_{\epsilon}(-\frac{1}{2} e_{i_0})} |F_A|^2\xi(\frac{1+a}{\epsilon}(x+\frac{1}{2} e_{i_0})) dV_g &\le \epsilon^{4-n}\int_{B_{\epsilon}(-\frac{1}{2} e_{i_0})} |F_A|^2 dV_g.
\end{split}
\end{eqnarray}
Using the third and fourth inequality in \eqref{4eqngele}, we obtain
\begin{eqnarray}
\begin{split}
\label{passs}
&2^{4-n}\int_{B_2(-\frac{1}{2} e_{i_0})} |F_A|^2dV_g-\epsilon^{4-n}\int_{B_{\epsilon}(-\frac{1}{2} e_{i_0})} |F_A|^2 dV_g\\
\le &2^{4-n}\int_{B_{2(1+a)}(-\frac{1}{2} e_{i_0})} |F_A|^2\xi(\frac{x+\frac{1}{2} e_{i_0}}{2}) dV_g -\epsilon^{4-n}\int_{B_{\epsilon}(-\frac{1}{2} e_{i_0})} |F_A|^2\xi(\frac{1+a}{\epsilon}(x+\frac{1}{2} e_{i_0})) dV_g \\
\le &\bigg{|}2^{4-n}\int_{B_{2(1+a)}(-\frac{1}{2} e_{i_0})} |F_A|^2\xi(\frac{x+\frac{1}{2} e_{i_0}}{2}) dV_g -2^{4-n}\int_{B_{2(1+a)}(0)} |F_A|^2\xi(\frac{x}{2}) dV_g   \bigg{|}\\
+&\bigg{|}\epsilon^{4-n}\int_{B_{\epsilon}(-\frac{1}{2} e_{i_0})} |F_A|^2\xi(\frac{1+a}{\epsilon}(x+\frac{1}{2} e_{i_0})) dV_g -\epsilon^{4-n}\int_{B_{\epsilon}(0)} |F_A|^2\xi(\frac{1+a}{\epsilon}x) dV_g   \bigg{|}\\
+&\bigg{|}2^{4-n}\int_{B_{2(1+a)}(0)} |F_A|^2\xi(\frac{x}{2}) dV_g-\epsilon^{4-n}\int_{B_{\epsilon}(0)} |F_A|^2\xi(\frac{1+a}{\epsilon}x) dV_g  \bigg{|} \\
=& I+II+III.
\end{split}
\end{eqnarray}
Using \eqref{edropat2} and \eqref{edropatep}, we have
\begin{eqnarray}
\begin{split}
\label{1and2}
I+II\le C^{\prime}(n,\epsilon,\Lambda)a^{-1}\sqrt{\delta}.
\end{split}
\end{eqnarray}
We then use triangle inequality to estimate $III$ trivially as follows:
\begin{eqnarray}
\begin{split}
III&\le \bigg{|}2^{4-n}\int_{B_{2(1+a)}(0)} |F_A|^2\xi(\frac{x}{2}) dV_g-(2(1+a))^{4-n}\int_{B_{2(1+a)}(0)} |F_A|^2\xi(\frac{x}{2}) dV_g \bigg{|} \\
&+ \bigg{|}(2(1+a))^{4-n}\int_{B_{2(1+a)}(0)} |F_A|^2\xi(\frac{x}{2}) dV_g-(\frac{\epsilon}{1+a})^{4-n}\int_{B_{\epsilon}(0)} |F_A|^2\xi(\frac{1+a}{\epsilon}x) dV_g   \bigg{|}\\
&+ \bigg{|}(\frac{\epsilon}{1+a})^{4-n}\int_{B_{\epsilon}(0)} |F_A|^2\xi(\frac{1+a}{\epsilon}x) dV_g -\epsilon^{4-n}\int_{B_{\epsilon}(0)} |F_A|^2\xi(\frac{1+a}{\epsilon}x) dV_g  \bigg{|}\\
&=IV+V+VI.
\end{split}
\end{eqnarray}
Clearly, 
\begin{eqnarray}
\begin{split}
\label{4and6}
IV+VI\le C(n)\Lambda\bigg{(}|(1+a)^{n-4}-1|+|(1+a)^{4-n}-1|\bigg{)}.
\end{split}
\end{eqnarray}
By \eqref{1eqngele} we obtain:
\begin{eqnarray}
\begin{split}
\label{term5}
V\le  8^{4-n}\int_{B_8(0)} |F_A|^2dV_g-(8\delta)^{4-n}\int_{B_{8\delta}(0)} |F_A|^2 dV_g\le \delta.
\end{split}
\end{eqnarray}
Firstly, by \eqref{4and6} we could choose $a(n,\Lambda,\epsilon)$ to be so small that $IV+VI\le \epsilon/3$, and then by \eqref{1and2} and \eqref{term5} let us choose $\delta(n,\Lambda,\epsilon,a)$ to be so small that $I+II+V\le \epsilon/3$; therefore the left hand side of \eqref{passs} less than $\epsilon$. Now that $i_0$ is arbitrarily chosen from $1,\cdots,k+1$, we have actually proved that $0,-\frac{1}{2}e_1,\cdots,-\frac{1}{2}e_{k+1}$ are all $(\epsilon,1)$-cone tips of $A$; further, they $\frac{1}{2}$-effectively (and hence $\tau(n)$-effectively) span a $k+1$-plane with respect to $B_1(0)$. By definition $B_1(0)$ is $(k+1,\epsilon)$-symmetric. This completes the proof of lemma \ref{l2appp}.
\end{proof}

\subsection{Proof of Theorem \ref{l2ap}}

\begin{proof}[Proof of Theorem \ref{l2ap}]
The proof basically follows Subsection 7.2 of \cite{NV17}. For convenience we assume w.l.o.g. that $r=1$. In addition, up to normalization we assume that $\mu$ is a probability measure on $B_1(p)$. Consider the linear transformation on $\mathbb{R}^n$ defined as follows: 
$$T:\mathbb{R}^n\longrightarrow\mathbb{R}^n,$$
$$v\mapsto\int_{B(p,1)}\langle x,v\rangle xd\mu(x).$$
Clearly, this is a symmetric linear transformation and hence is diagonalizable. Let its (ordered) eigenvalues be $\lambda_1\ge\lambda_2\ge\cdots\ge\lambda_n$, and the corresponding eigenvectors be $\nu_1,\nu_2,\cdots,\nu_n$.
Using Lagrange multiplier method one could show
\begin{eqnarray}
\begin{split}
\label{dist}
\inf_{L^l\subseteq \mathbb{R}^n}\int_{B(p,1)}d^2(x,L^l)d\mu(x)=\lambda_{l+1}+\cdots+\lambda_n\le (n-l)\lambda_{l+1},
\end{split}
\end{eqnarray}
for any $l=1,2,\cdots,n-1$; here the infimum is taken over all $l$-dimensional affine subspaces. For the proof of \eqref{dist}, see Lemma 7.5 of \cite{NV17}. Therefore, to obtain the inequality \eqref{l2app} it suffices to estimate $\lambda_{k+1}$. For convenience we assume from now on that $p=0$ and the center of $\mu$-mass is 0, i.e. $\int_{B_1(0)}xd\mu(x)=0$. Note for all $j=1,\cdots,n$ we have
$$\lambda_j\nu_j=\int_{B_1(0)}\langle x,\nu_j\rangle xd\mu(x).$$
Hence, for all $z\in B_4(0)$ one has
$$\lambda_j\nu_j=\int_{B_1(0)}\langle x,\nu_j\rangle (x-z)d\mu(x).$$
Now we apply the linear transformation $\nu\mapsto \iota_{\nu}F_A(z)$ to both sides of above identity to obtain:
\begin{eqnarray}
\begin{split}
\label{dist1}
\lambda_j\iota_{\nu_j}F_A(z)=\int_{B_1(0)}\langle x,\nu_j\rangle \iota_{x-z}F_A(z)d\mu(x).
\end{split}
\end{eqnarray}
Applying Cauchy-Schwarz inequality to the right hand side of \eqref{dist1} and then taking absolute values on both sides, we obtain
\begin{eqnarray}
\begin{split}
\label{dist2}
|\lambda_j\iota_{\nu_j}F_A(z)|\le \big{(}\int_{B_1(0)}\langle x,\nu_j\rangle^2 d\mu(x)\big{)}^{1/2}\big{(} |\iota_{x-z}F_A(z)|^2d\mu(x)\big{)}^{1/2}.
\end{split}
\end{eqnarray}
Notice
\begin{eqnarray}
\begin{split}
\label{dist3}
\int_{B_1(0)}\langle x,\nu_j\rangle^2 d\mu(x)=\lambda_j.
\end{split}
\end{eqnarray}
Upon inserting \eqref{dist3} into \eqref{dist2} we have
$$\lambda_j^2|\iota_{\nu_j}F_A(z)|^2\le\lambda_j\int_{B_1(0)} |\iota_{x-z}F_A(z)|^2d\mu(x).$$
Now let us integrate the above identity with respect to the volume form $dV_g$ on $B_4(0)$ to obtain
$$\lambda_j\int_{B_4(0)}|\iota_{\nu_j}F_A|^2(z)dV_g(z)\le \int_{B_1(0)}\int_{B_4(0)}|\iota_{x-z}F_A(z)|^2dV_g(z)d\mu(x).$$
Note in the equation above, instead of integrating on the annulus $A_{3,4}(0)$ as in its counterpart equation (7.25) of \cite{NV17}, we integrate on the entire ball $B_4(p)$ in order to apply Lemma \ref{l2appp} later. Now for each fixed $x\in B_1(0)$, we compute
\begin{eqnarray}
\begin{split}
\label{ann}
&\int_{B_4(0)}|\iota_{x-z}F_A(z)|^2dV_g(z)\\
\le&\int_{B_8(x)}|\iota_{x-z}F_A(z)|^2dV_g(z)\\
\le& \int_{A_{\epsilon_1,8}(x)}|\iota_{\partial_{r_x}}F_A(z)|^2|x-z|^{4-n}|x-z|^{n-2}dV_g(z)+ \int_{B_{\epsilon_1}(x)}|\iota_{x-z}F_A(z)|^2dV_g(z)  \\
=&C(n)\bigg{(}W_1(x)+\int_{B_{\epsilon_1}(x)}|\iota_{x-z}F_A(z)|^2dV_g(z)\bigg{)}.
\end{split}
\end{eqnarray}
Here we used \eqref{mono} in both the penultimate inequality and the last identity. By \eqref{ann} we have
\begin{eqnarray}
\begin{split}
\label{inverse}
\lambda_{k+1}\sum_{i=1}^{k+1}\int_{B_4(0)}|\iota_{\nu_j}F_A(z)|^2dV_g(z)&\le \ \sum_{i=1}^{k+1}\lambda_j\int_{B_4(0)}|\iota_{\nu_j}F_A(z)|^2dV_g(z)\\
&\le C^{\prime\prime}(n)\int_{B_1(0)}\bigg{(}W_1(x)+\int_{B(x,\epsilon_1)}|\iota_{x-z}F_A(z)|^2dV_g(z)\bigg{)}d\mu(x).
\end{split}
\end{eqnarray}
Now let us choose $\delta(n,\Lambda,\epsilon)$ in Theorem \ref{l2ap} as small as $\delta(n,\Lambda,\epsilon)$ in Lemma \ref{l2appp}. Since $A$ is $(0,\delta)$-symmetric in $B_8(p)$, but not $(k+1,\epsilon)$-symmetric in $B_1(p)$, we could then apply Lemma \ref{l2appp} to conclude
\begin{eqnarray}
\begin{split}
\label{desired}
\sum_{i=1}^{k+1}\int_{B_4(0)}|\iota_{\nu_j}F_A(z)|^2dV_g(z)\ge \delta.
\end{split}
\end{eqnarray}
Inserting \eqref{desired} into \eqref{inverse} and then combining it with \eqref{dist}, we obtain
\begin{eqnarray}
\begin{split}
&\inf_{L^k\subseteq \mathbb{R}^n}\int_{B_1(0)}d^2(x,L^k)d\mu(x)\\
\le& c(n)\lambda_{k+1}\\
\le&  \frac{c(n)C^{\prime\prime}(n)}{\delta(n,\epsilon,\Lambda)}\int_{B_1(0)}\bigg{(}W_1(x)+\int_{B(x,\epsilon_1)}|\iota_{x-z}F_A(z)|^2dV_g(z)\bigg{)}d\mu(x)\\
\eqqcolon& C(n,\Lambda,\epsilon)\int_{B_1(0)}\bigg{(}W_1(x)+\int_{B(x,\epsilon_1)}|\iota_{x-z}F_A(z)|^2dV_g(z)\bigg{)}d\mu(x)
\end{split}
\end{eqnarray}
Thus we complete the proof of Theorem \ref{l2ap}.
\end{proof}

\section{The inductive covering lemma}\label{cutoff}
In this section, we will prove a key covering lemma, which will later (in Section \ref{pomt}) be iteratively applied in order to complete the proof of the main theorems. Briefly speaking, the lemma allows us to construct inductive coverings of $S^k_{\epsilon,r}$ while keeping effective track of the content estimates through all stages, and finally arrive at a cover of $B_r(S^k_{\epsilon,r})$ by balls with size $r$ whose content estimate will then yield the desired Minkowski volume estimate \eqref{maininequality}. What lies in the heart of proving the lemma is to obtain the effective content estimate of the covers that we build, via applying the $L^2$-best approximation theorem and then the rectifiable-Reifenberg theorems to the measures associated to these covers. Let us begin by stating the Lemma (c.f. Lemma 8.1 of \cite{NV17}):
\begin{lemma}
\label{keylemma}
Let $A$ be stationary connection satisfying the same conditions as Theorem \ref{maintheorem}. Let $E=\sup_{x\in B_1(p)\cap S^k_{\epsilon,r}(A)}\theta_A(x,1)$. Then for all $\eta \le \eta(n,\Lambda,\epsilon)$, there exists a covering $S^k_{\epsilon,r}(A)\cap B_1(p)\subseteq U= U_r\cup U_+$ such that:\\
(1) $U_+=\bigcup_i B_{r_i}(x_i)$ with $r_i> r$ and $\sum_i r_i^k\le C(n,\Lambda, \epsilon)$;\\
(2) $\sup_{y\in B_{r_i}(x_i)\cap S^k_{\epsilon,r}(A)}\theta_A(y,r_i)\le E-\eta$;\\
(3) if $r>0$, then $U_r=\bigcup_{i=1}^NB_r(x_i^r)$ with $N\le C(n)r^{-k}$;\\
(4) if $r=0$, then $U_0$ is $k$-rectifiable and satisfies $\text{Vol}(B_s(U_0))\le C(n) s^{n-k}$ for each $s>0$; in particular $H^k(U_0)\le C(n).$
\end{lemma}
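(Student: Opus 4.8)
The plan is to prove Lemma \ref{keylemma} by running an inductive covering scheme on dyadic scales, where at each stage we partition the current covering balls into a "good" piece (on which the quantitative stratification $S^k_{\epsilon,r}$ can be efficiently covered by balls controlled via the discrete rectifiable-Reifenberg Theorem \ref{DRR}) and a "bad" piece (on which the normalized energy $\theta_A$ has dropped definitely, and which therefore gets pushed into $U_+$). Concretely, I would first fix the threshold $E = \sup_{x\in B_1(p)\cap S^k_{\epsilon,r}}\theta_A(x,1)$ and choose a small $\eta$. At the first stage, cover $S^k_{\epsilon,r}\cap B_1(p)$ by a Vitali family of balls $B_r(x_i)$; each such ball is either (type bad) one where $\sup_{y\in B_{r_i}(x_i)\cap S^k_{\epsilon,r}}\theta_A(y,r_i)\le E-\eta$, which we put straight into $U_+$, or (type good) one where there is a point $y$ with $\theta_A(y,\cdot)$ staying close to $E$ down to scale $\sim r$. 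For the good balls we want to apply the $L^2$-best approximation Theorem \ref{l2ap} to the discrete measure $\mu = \sum r_i^k\delta_{x_i}$ supported on the good centers, and run exactly the argument sketched in Subsection \ref{outlinerl}: bound $\sum_{2^{-\alpha}\le 2s}\int_{B_s} D^k_\mu(y,2^{-\alpha})\,d\mu$ by $\mathcal W + \mathcal E$, make $\mathcal E$ small by choosing $\epsilon_1 = \epsilon_1(n,\Lambda,\epsilon)$ tiny (using the dyadic summability of the extra error term, which follows from the monotonicity formula \eqref{mono}), and make $\mathcal W$ small using the almost-constancy of $\theta_A$ and telescoping of the energy-drop function $W_{2^{-\alpha}}$. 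This verifies the hypothesis \eqref{allscale1} of Theorem \ref{DRR}, giving a packing bound $\sum r_i^k \le D(n)$ at every scale.

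Next I would iterate: on each good ball $B_{r_i}(x_i)$ we rescale so it becomes a unit ball, re-decompose $S^k_{\epsilon,r}$ into good and bad subballs at the next dyadic scale $r_i/2$ (or some fixed dyadic ratio), send the new bad subballs to $U_+$, and keep subdividing the good subballs. The key bookkeeping point is that each passage through a bad ball costs a definite amount of energy drop $\eta$, so along any chain of nested good balls the energy can drop only finitely many (at most $\sim E/\eta \le C(n,\Lambda)/\eta$) times before it would go negative — hence the iteration must terminate after boundedly many stages, or else reach scale $r$. At termination: the balls that were sent to $U_+$ at all stages have radii $r_i > r$, satisfy property (2) by construction, and their $k$-contents sum (across the at-most-$C(n,\Lambda,\epsilon)$ stages, each contributing $\le D(n)$ by the rectifiable-Reifenberg packing bound) to $\le C(n,\Lambda,\epsilon)$, giving (1). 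The remaining balls all have radius exactly $r$ and their number is $\le C(n) r^{-k}$ again by the packing estimate, giving (3). When $r=0$ the iteration may continue indefinitely along good chains, but the content bounds are scale-uniform, so one can instead directly apply the continuous rectifiable-Reifenberg Theorem \ref{RR} to the limiting set $U_0$ (after checking its hypothesis \eqref{allscale} via the same $\mathcal W + \mathcal E$ estimate integrated over all scales), concluding $k$-rectifiability and $H^k(U_0)\le C(n)$, and the Minkowski estimate $\text{Vol}(B_s(U_0))\le C(n)s^{n-k}$ follows from the uniform $k$-content bound on balls, giving (4).

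The main technical obstacle, and the place I would spend the most care, is controlling the error term $\mathcal E$ arising from the extra second term on the right side of \eqref{l2app} — the term $r^{2-n}\int_{B_{\epsilon_1 r}(x)}|\iota_{x-z}F_A|^2\,dV_g$ — which is absent in the harmonic-map setting of \cite{NV17}. The point is that when we sum $D^k_\mu(y,2^{-\alpha})$ over all dyadic scales $2^{-\alpha}\le 2s$, this term contributes $\sum_\alpha (2^{-\alpha})^{2-n}\int_{B_{\epsilon_1 2^{-\alpha}}(z)}|\iota_{z-x}F_A|^2$, and one must show this geometric-type sum is controlled — this is exactly the estimate \eqref{dyaest} referred to in the outline, and it should follow by rewriting $|\iota_{x-z}F_A|^2 \le C|x-z|^{n-2}\cdot |x-z|^{4-n}|\iota_{\partial r_x}F_A|^2$ on the annuli and telescoping against the monotonicity formula \eqref{mono}, so that the whole sum is bounded by $C(n)\theta_A(z,\cdot)\le C(n)\Lambda$, and a factor of $\epsilon_1$ can be extracted to make $\mathcal E$ as small as we like. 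A secondary subtlety is the interplay between the $(0,\delta)$-symmetry hypothesis needed to invoke Theorem \ref{l2ap} and the bad/good dichotomy: one has to ensure that on good balls $A$ is indeed $(0,\delta)$-symmetric at the relevant scale (which follows from the near-constancy of $\theta_A$ down to a controlled scale, i.e. the definition of the good balls, possibly after shrinking $\eta$), and that $A$ fails to be $(k+1,\epsilon)$-symmetric there (which is precisely membership in $S^k_{\epsilon,r}$). Getting the quantifier order right — first $\epsilon$, then $\epsilon_1(n,\Lambda,\epsilon)$, then $\eta(n,\Lambda,\epsilon)$, then the Vitali/covering constants — is the bureaucratic heart of the argument but presents no conceptual difficulty once the $\mathcal E$-summability is in hand.
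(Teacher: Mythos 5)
Your overall route is the paper's route (the inductive covering scheme of \cite{NV17}, with Theorem \ref{l2ap} feeding Theorems \ref{DRR}/\ref{RR}), and you correctly isolate and correctly handle the genuinely new point: the extra term in \eqref{l2app} is summed over dyadic scales by writing $|\iota_{x-z}F_A|^2\le |x-z|^{n-2}\cdot|x-z|^{4-n}|\iota_{\partial r_x}F_A|^2$ on annuli and telescoping against \eqref{mono}, extracting a factor $\epsilon_1^{n-2}$ --- this is exactly \eqref{dyaest}. However, there is a missing ingredient. The paper's proof of Lemma \ref{keylemma} requires, besides Theorem \ref{l2ap}, the quantitative dimension reduction Theorem \ref{quandimred} (the replacement for Theorem 2.4 of \cite{NV17}, proved in Appendix B): inside a ball where $A$ is almost $k$-symmetric with respect to some $V_k$ --- which is forced whenever the pinched (high-energy) points effectively span a $k$-plane --- every point of $S^k_{\epsilon,r}$ must lie in a small tube around $V_k$, since otherwise it would be $(k+1,\epsilon)$-symmetric at a definite scale $r^*\ge r$. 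This tube structure is what gives the a priori Ahlfors-type bound on the discrete measure built from the covering (the analogue of (8.37) of \cite{NV17}), which is the base input without which the scale-by-scale induction verifying \eqref{allscale1} cannot even start: disjointness of the Vitali balls alone does not give $\mu(B_s(x))\lesssim s^k$ when the radii vary, and your proposal offers no substitute for this step.

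There is also a bookkeeping problem in how you tally $U_+$. Within Lemma \ref{keylemma} the recursion runs over the $\sim\log(1/r)$ dyadic scales between $1$ and $r$, and bad (energy-dropped) balls are produced at every one of these scales; applying Theorem \ref{DRR} separately at each stage and summing ``$\le D(n)$ per stage'' would give a bound of order $D(n)\log(1/r)$, not $C(n,\Lambda,\epsilon)$. The correct argument applies the discrete rectifiable-Reifenberg once, to a single measure $\mu=\sum_i r_i^k\delta_{x_i}$ carrying balls from all scales simultaneously, with the induction on scale internal to the verification of \eqref{allscale1} (this is how \eqref{lll}--\eqref{model2} are used). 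Relatedly, your ``each passage through a bad ball costs $\eta$, so at most $E/\eta$ stages'' argument is the energy induction of Claim \ref{indomit} in Section \ref{pomt}, i.e.\ the derivation of Theorem \ref{maintheorem} \emph{from} Lemma \ref{keylemma}; it is not available inside the proof of the covering lemma itself, where all of $U_+$ corresponds to a single energy drop and chains of good balls lose no energy at all. Your treatment of the case $r=0$ via Theorem \ref{RR} and of the $(0,\delta)$-symmetry verification on good balls is in line with the paper, but both again rest on the measure bounds that the missing dimension-reduction/tube step is needed to supply.
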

The above lemma is proved by modifying the proof of Lemma 8.1 of \cite{NV17}, presented in Section 8 of \cite{NV17}. Thus we only give details for the modifications we made to \cite{NV17}, and for the rest steps that are identical (up to changing notations and terminologies) to those in \cite{NV17}, we refer the readers to the precise lines containing them, instead of letting the same details reappear in this paper. 
\begin{proof}
The proof starts to follow \cite{NV17} from the paragraph after remark 8.1 on p. 205, until the second paragraph of p. 211. Note in the second line of the proof of Lemma 8.5 of \cite{NV17}, the choice of $\tau(n)$ (see Definition \ref{effspa}) is determined. The first major difference comes in the third paragraph preceding Subsection 8.2, p. 211. Instead of applying Theorem 2.4 of \cite{NV17}, we apply the following Theorem herein:
\begin{theorem}[Quantitative dimension reduction]
\label{quandimred}
For each $\epsilon>0$, there exists $\delta(n,\Lambda,\epsilon)>0$, $r(n,\Lambda,\epsilon)>0$, such that if $A$ is $(k,\delta)$-symmetric in $B_2(p)$ with respect to some $k$-plane $V_k$, then for each $x\in B_1(p)\backslash B_{\epsilon}(V_k)$, then there exists some $r^{*}\ge r(n,\Lambda,\epsilon)$ such that $A$ is $(k+1,\epsilon)$-symmetric in $B_{r^*}(x)$.
\end{theorem}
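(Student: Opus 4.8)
The plan is to prove the contrapositive-style statement by a compactness-and-contradiction argument, mirroring the strategy of Theorem 2.4 of \cite{NV17} but carried out entirely at the level of Radon measures rather than connections, so that only weak-$*$ compactness is needed. First I would fix $\epsilon>0$ and suppose the conclusion fails. Then there exist a sequence of stationary connections $A_i$ (with $\int_{B_{16}}|F_{A_i}|^2\le\Lambda$), numbers $\delta_i\to 0$, points $p_i$ and $k$-planes $V_k^{(i)}$ such that $A_i$ is $(k,\delta_i)$-symmetric in $B_2(p_i)$ with respect to $V_k^{(i)}$, but there are points $x_i\in B_1(p_i)\setminus B_\epsilon(V_k^{(i)})$ at which $A_i$ fails to be $(k+1,\epsilon)$-symmetric in $B_s(x_i)$ for \emph{every} scale $s\ge 1/i$ (equivalently $x_i\in S^{k}_{\epsilon,1/i}(A_i)$ localized near $x_i$). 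After translating and rotating so that $p_i=0$ and $V_k^{(i)}=V_k$ is a fixed $k$-plane, and passing to a subsequence so that $x_i\to x_\infty\in \overline{B_1(0)}\setminus B_\epsilon(V_k)$, I would use the monotonicity formula \eqref{mono} (which gives uniform bounds on $\theta_{A_i}$ and hence uniformly bounded total variation of $|F_{A_i}|^2 dV_g$) to extract a weak-$*$ limit Radon measure $\mu$ on $B_2(0)$, with $\theta_\mu(x,r)=r^{4-n}\mu(B_r(x))$ monotone in $r$ (upper semicontinuity of $\theta$ passes to the limit).

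The next step is the rigidity input: because each $A_i$ is $(k,\delta_i)$-symmetric with respect to $V_k$, each has $k+1$ points $\tau$-effectively spanning $V_k$ that are $(\delta_i,2)$-cone tips; passing to the limit, the corresponding limit points $\tau$-effectively span $V_k$ and are \emph{strict} cone tips of $\mu$ (the energy-drop $|\theta_\mu(y,4)-\theta_\mu(y,0^+)|$ is the limit of quantities $\le\delta_i\to0$, and monotonicity forces $\theta_\mu(y,\cdot)$ constant). By Remark \ref{keyremark} this means $\mu$ is strictly $k$-symmetric with respect to $V_k$ in $B_2(0)$, so $\mu$ splits as a product: $\mu$ is invariant under translations in $V_k$ and is determined by its restriction to the orthogonal $(n-k)$-plane $V_k^\perp$, on which $r^{4-n}\mu(B_r)$ is still monotone. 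Since $x_\infty\notin B_\epsilon(V_k)$, its projection $\bar x$ to $V_k^\perp$ is nonzero. The standard cone-splitting principle now applies in $V_k^\perp$: at a point $\bar x\ne 0$ the homogeneity of $\mu$ in $V_k^\perp$ together with the $V_k$-translation symmetry, \emph{combined with} the monotonicity/cone structure at $\bar x$ obtained from \eqref{mono}, forces the tangent cone of $\mu$ at (a point near) $x_\infty$ to be $(k+1)$-symmetric — concretely, $\mu$ is symmetric with respect to $V_k \oplus \mathbb{R}\bar x$ near $x_\infty$ in a fixed ball $B_{r_0}(x_\infty)$, where $r_0$ depends only on $n,\Lambda,\epsilon$ via the lower bound $|\bar x|\ge\epsilon$ and the density gap from monotonicity.

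Finally I would run the contradiction: $(k+1,\cdot)$-symmetry is an \emph{open} condition in the weak-$*$ topology in the following quantitative sense — if $\mu$ is strictly $(k+1)$-symmetric in $B_{r_0}(x_\infty)$ then, for $i$ large, $A_i$ is $(k+1,\epsilon)$-symmetric in $B_{r_0/2}(x_i)$. This is because the defining data of $(k+1,\epsilon)$-symmetry (existence of $k+2$ points that $\tau(n)$-effectively span a $(k+1)$-plane meeting $B_{r_0/20}$ and that are $(\epsilon,r_0/2)$-cone tips) is stable: the cone tips of $\mu$ can be perturbed to $(\epsilon,r_0/2)$-cone tips $x_j^{(i)}$ of $A_i$ using weak-$*$ convergence of $\theta$, and effective spanning is an open condition. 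Taking $r^*:=r_0/2\ge r(n,\Lambda,\epsilon)$ contradicts the assumption that $A_i$ fails $(k+1,\epsilon)$-symmetry at $x_i$ at \emph{all} scales $\ge 1/i$ once $1/i < r^*$. The main obstacle I anticipate is making the cone-splitting step in $V_k^\perp$ genuinely quantitative and basepoint-stable — i.e. extracting a \emph{definite} radius $r^*(n,\Lambda,\epsilon)$ rather than merely an existential one — which requires carefully combining the $V_k$-translation invariance of $\mu$ with the radial monotonicity \eqref{mono} at the nonzero point $\bar x$ and tracking how the $\epsilon$-distance from $V_k$ controls all constants; a secondary technical point is checking that strict cone tips and effective spanning are robust under weak-$*$ convergence, which is where the elementary identity \eqref{virtue} and Definition \ref{conetipmu} do the work.
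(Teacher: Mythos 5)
Your overall skeleton (argue by contradiction, extract a weak-$*$ limit measure $\mu$, show the limit is strictly $k$-symmetric with respect to the limit plane, and transfer quantitative symmetry back to $A_i$ for large $i$ by weak-$*$ stability of almost cone tips and effective spanning) is exactly the framework of the paper's proof in Appendix B, and those parts are sound. The gap is in the central step. You assert that the strict $k$-symmetry of $\mu$ in $B_1$, via ``cone splitting in $V_k^\perp$,'' forces $\mu$ to be symmetric with respect to $V_k\oplus\mathbb{R}\bar x$ \emph{in a fixed ball} $B_{r_0}(x_\infty)$ with $r_0=r_0(n,\Lambda,\epsilon)$. That is false as stated: $\mu$ is a cone only about points of $V_k$, not about the off-plane point $x_\infty$, so classical cone splitting gives at best that \emph{tangent measures of $\mu$ at $x_\infty$} are $(k+1)$-symmetric, i.e. an asymptotic statement with no definite scale. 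A $k$-symmetric cone measure need not be strictly, or even almost, $(k+1)$-symmetric in any ball of a priori radius about an off-axis point (the extra translation invariance only emerges in the blow-up limit). You flag precisely this quantification as your ``main obstacle,'' but it is not a technical refinement to be filled in later --- it is the entire content of the theorem, since the conclusion demands a radius $r^*\ge r(n,\Lambda,\epsilon)$.

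The paper closes this gap with two ingredients you are missing. First, a pigeonhole argument on the monotone quantity: since $\theta_\mu(y_\infty,\cdot)$ is monotone and bounded by $C\Lambda$, among the $O(\Lambda/\epsilon)$ scales $r_0,(\epsilon_0/10) r_0,(\epsilon_0/10)^2 r_0,\dots$ there must be one scale $r^*\ge r_0\,(\epsilon_0/10)^{10\Lambda/\epsilon_0}$ at which the energy drop $|\theta_\mu(y_\infty,10r^*)-\theta_\mu(y_\infty,\epsilon_0 r^*/10)|<\epsilon_0/10$; this produces an \emph{almost} cone tip at the off-plane point at a \emph{definite} scale, which is all one can hope for. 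Second, a geometric construction of the extra spanning direction: using the strict cone structure at the projection $y_\infty^{\mathrm{proj}}\in V_k$ (equation \eqref{model34}), the almost-cone-tip property at $y_\infty$ is transported radially outward to the point $y_{r^*}\in\partial B_{r^*}(y_\infty)$ on the ray from $y_\infty^{\mathrm{proj}}$ through $y_\infty$, while translation invariance along $V_k$ supplies the $k$ points $y_{\infty,1},\dots,y_{\infty,k}$; together these $k+2$ points effectively span a $(k+1)$-plane of $(\epsilon_0/2,r^*)$-cone tips, giving $(k+1,\epsilon_0/2)$-symmetry of $\mu$ in $B_{r^*}(y_\infty)$ at a scale bounded below in terms of $n,\Lambda,\epsilon$ only, after which your final weak-$*$ transfer step applies verbatim. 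Without the pigeonhole step and the construction of $y_{r^*}$, your argument does not yield any lower bound on $r^*$ and therefore does not prove the theorem.
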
 
Note in Theorem \ref{quandimred} if we replace $A$ by any Radon measure $\mu$ obtained as in the paragraph preceding Definition \ref{conetipmu}, the conclusion still holds. This could be seen easily by using the weak-$*$ convergence $|F_{A_i}|^2d\text{Vol}_g\to \mu$. In spite of the almost identical statements of Theorem \ref{quandimred} and Theorem 2.4 of \cite{NV17}, their proofs (which are both by contradiction) slightly differ from each other due to the difference between the two notions of ``quantitative symmetry''. The proof of Theorem \ref{quandimred} will be given in Appendix B.

Upon applying Theorem \ref{quandimred}, the proof continues to follow \cite{NV17} until line 3, p.214. There we apply Theorem \ref{l2ap} instead of Theorem 7.1 of \cite{NV17}. Correspondingly, replace (8.38) of \cite{NV17} by
\begin{eqnarray}
\begin{split}
\label{model1}
D^k_{\mu^{\prime}}(y_j,s)\le C(n,\Lambda,\epsilon)s^{-k}\int_{B_s(y_j)}\bigg{(}W_s(z)+s^{2-n}\int_{B_{\epsilon_1 s}(z)}|\iota_{z-x}F_A|^2dV_g(x)\bigg{)}d\mu^{\prime}(z).\\
\end{split}
\end{eqnarray}
By applying this to all $r\le t\le s$, we have the following estimate in place of (8.39) of \cite{NV17}
\begin{eqnarray}
\begin{split}
\label{long1}
&s^{-k}\int_{B_s(x)}D^k_{\mu}(y,t)d\mu^{\prime}(y)\\
\le& Cs^{-k}\int_{B_s(x)}t^{-k}\int_{B_t(y)}\bigg{(}W_t(z)+t^{2-n}\int_{B_{\epsilon_1 t}(z)}|\iota_{z-x}F_A|^2dV_g(x)\bigg{)}d\mu^{\prime}(z)d\mu^{\prime}(y)\\
\le& Cs^{-k}t^{-k}\int_{B_s(x)}\mu^{\prime}(B_t(y))\bigg{(}W_t(z)+t^{2-n}\int_{B_{\epsilon_1 t}(z)}|\iota_{z-x}F_A|^2dV_g(x)\bigg{)}d\mu^{\prime}(z)\\
\le& Cs^{-k}\int_{B_s(x)}\bigg{(}W_t(z)+t^{2-n}\int_{B_{\epsilon_1 t}(z)}|\iota_{z-x}F_A|^2dV_g(x)\bigg{)}d\mu^{\prime}(z).
\end{split}
\end{eqnarray}
In above inequality, let $t=2^{-\beta}\le s\le 2^{-\alpha+1}$ and sum \eqref{long1} over all such $\beta$s. In place of (8.40) of \cite{NV17}, we obtain
\begin{eqnarray}
\begin{split}
\label{lll}
&\sum_{2^{-\beta}\le s}s^{-k}\int_{B_s(x)}D^k_{\mu^{\prime}}(y,2^{-\beta})d\mu^{\prime}(y)\\
\le& Cs^{-k}\int_{B_s(x)}\sum_{\overline{r}_y\le 2^{-\beta}\le s}\bigg{(}W_{2^{-\beta}}(z)+{(2^{-\beta})}^{2-n}\int_{B_{\epsilon_1 2^{-\beta}}(z)}|\iota_{z-x}F_A|^2dV_g(x)\bigg{)}d\mu^{\prime}(z)\\
\le& Cs^{-k}\bigg{(}\epsilon_1^{-1}\int_{B_s(x)}|\theta_A(z,4s)-\theta_A(z,\epsilon_1r)|d\mu^{\prime}(z)\\
&+\int_{B_s(x)}\bigg{(}\sum_{\overline{r}_y\le 2^{-\beta}\le s}{(2^{-\beta})}^{2-n}\int_{B_{\epsilon_1 2^{-\beta}}(z)}|\iota_{z-x}F_A|^2dV_g(x)\bigg{)}d\mu^{\prime}(z)\bigg{)}\\
\le& C(n,\Lambda,\epsilon)\epsilon_1^{-1}\eta^{\prime}+ Cs^{-k}\int_{B_s(x)}\bigg{(}\sum_{\overline{r}_y\le 2^{-\beta}\le s}{(2^{-\beta})}^{2-n}\int_{B_{\epsilon_1 2^{-\beta}}(z)}|\iota_{z-x}F_A|^2dV_g(x)\bigg{)}d\mu^{\prime}(z)\\
=&C(n,\Lambda,\epsilon)\epsilon_1^{-1}\eta^{\prime}+ II,
\end{split}
\end{eqnarray}
Now let us point out two distinctions between above estimate and (8.40) of \cite{NV17}. Firstly, the second term of the right hand side in \eqref{lll} does not exist in (8.40) of \cite{NV17}. In addition, compared to (8.40) of \cite{NV17}, an extra $\epsilon_1^{-1}$-factor appears in the first term on the right hand side of \eqref{lll} due to \eqref{w2drop}. To estimate $II$, let us use a dyadic decomposition to estimate following first:
\begin{eqnarray}
\begin{split}
\label{dyaest}
&\sum_{\overline{r}_y\le 2^{-\beta}\le s}{(2^{-\beta})}^{2-n}\int_{B_{\epsilon_1 2^{-\beta}}(z)}|\iota_{z-x}F_A|^2dV_g(x)\\
\le& \sum_{\overline{r}_y\le 2^{-\beta}\le s}{(2^{-\beta})}^{2-n}\sum_{k=0}^{\infty}\int_{A_{\epsilon_1 2^{-\beta-k-1},\epsilon_1 2^{-\beta-k}}(z)}|\iota_{z-x}F_A|^2dV_g(x)\\
\le& C(n)\sum_{\overline{r}_y\le 2^{-\beta}\le s}\sum_{k=0}^{\infty}{(2^{-k}\epsilon_1)}^{n-2}\int_{A_{\epsilon_1 2^{-\beta-k-1},\epsilon_1 2^{-\beta-k}}(z)}|x-z|^{4-n}|\iota_{\partial_{r_z}}F_A|^2dV_g(x)\\
=& C(n)\sum_{k=0}^{\infty}{(2^{-k}\epsilon_1)}^{n-2}\bigg{(}\sum_{\overline{r}_y\le 2^{-\beta}\le s}\int_{A_{\epsilon_1 2^{-\beta-k-1},\epsilon_1 2^{-\beta-k}}(z)}|x-z|^{4-n}|\iota_{\partial_{r_z}}F_A|^2dV_g(x)\bigg{)}\\
=&C(n)\sum_{k=0}^{\infty}{(2^{-k}\epsilon_1)}^{n-2}\bigg{(}\sum_{\overline{r}_y\le 2^{-\beta}\le s}\big{(}\theta_A(z,\epsilon_1 2^{-\beta-k-1})-\theta_A(z,\epsilon_1 2^{-\beta-k})\big{)}\bigg{)}\\
\le& 2C(n)\Lambda\sum_{k=0}^{\infty}{(2^{-k}\epsilon_1)}^{n-2}\\
\le& C(n,\Lambda)\epsilon_1^{n-2}.
\end{split}
\end{eqnarray}
In both the second equality and the penultimate inequality we used the monotonicity formula \eqref{mono}. Let us now insert \eqref{dyaest} into term $II$, \eqref{lll}, and use (8.37) of \cite{NV17} (note all inequalities preceding (8.38) of Section 8 of \cite{NV17} has been obtained in our context) to obtain
$$\sum_{\overline{r}_y\le 2^{-\beta}\le s}2^{\alpha k}\int_{B_s(x)}D^k_{\mu^{\prime}}(y,2^{-\beta})d\mu(y)\le C(n,\Lambda,\epsilon)\epsilon_1^{-1}\eta^{\prime}+C(n,\Lambda)\epsilon_1^{n-2}.$$
by firstly choosing $\epsilon_1\equiv\epsilon_1(n,\Lambda,\epsilon)$ and then choosing $\eta^{\prime}\le \eta^{\prime}(n,\Lambda,\epsilon)$ we proved (8.41) of \cite{NV17}; namely
\begin{eqnarray}
\begin{split}
\label{model2}
\sum_{2^{-\beta}\le s}s^{-k}\int_{B_s(x)}D^k_{\mu^{\prime}}(y,2^{-\beta})d\mu^{\prime}(y)\le \delta(n)^2,
\end{split}
\end{eqnarray}
where $\delta(n)$ is chosen from Theorem \ref{DRR}. By applying Theorem \ref{DRR} we obtain (8.42) of \cite{NV17} in our context, and hence the conclusion of Lemma \ref{keylemma} for the set $U_r$ ($r>0$) and $U_+$. To sum up, we have proved (1),
(2), (3) of Lemma \ref{keylemma}.

For (4) of Lemma \ref{keylemma}, namely the conclusion for $U_0$, we make modifications to (8.46), (8.47), and (8.48) of \cite{NV17} similar to the modifications we made to (8.38), (8.39), and (8.40) of \cite{NV17} respectively. Upon making these modifications and by following the same lines given in Subsection 8.3, pp. 214-216, \cite{NV17}, we also conclude (4) of Lemma \ref{keylemma}.
\end{proof}

\section{Proofs of the main theorems}\label{pomt}
In this section we complete the proofs of Theorems \ref{maintheorem}, \ref{maintheorem1} and \ref{maintheorem2}. Since the proofs are identical to that written in Section 9 of \cite{NV17} (up to switching a few notations from that context to this one), we omit the details. Nevertheless, for the readers' convenience we give an outline of how they are obtained from Lemma \ref{keylemma}. Here we follow Subsection 9.1 of [NV17].
\subsection{An outline}\label{outlineind}
The proof goes by induction on energy and iterative applications of Lemma \ref{keylemma}. To be precise, the aim is to show:
\begin{claim}
\label{indomit}
For each $l$, there exists a constant $C(l,n,\Lambda,\epsilon)$ and a covering 
\begin{eqnarray}
\begin{split}
S^k_{\epsilon,r}(A)\subseteq U_r^l\cup U^l_+=\bigcup_i B_{r}(x^{r,l}_i)\cup\bigcup_i B_{r_i^l}(x_i^l),
\end{split}
\end{eqnarray}
with $r_i^l>r$, such that the following two properties hold:
\begin{eqnarray}
\begin{split}
\label{indhold}
&r^{k-n}\text{Vol}(B_r(U_r^l))+\omega_k\sum_i (r_i^l)^k\le C(l,n,\Lambda,\epsilon),\\
&\sup_{y\in B_{r_i^l}(x_i^l)\cap S^k_{\epsilon,r}(A)}\theta_A(y,r_i^l)\le \Lambda-l\cdot\eta.
\end{split}
\end{eqnarray}
\end{claim}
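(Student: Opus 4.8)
The plan is to induct on $l$, using Lemma \ref{keylemma} to pass from the $l$-th covering to the $(l+1)$-th. For the base case $l=0$, the single ball $B_1(p)$ covers $S^k_{\epsilon,r}(A)\cap B_1(p)$, and since $\int_{B_{16}(p)}|F_A|^2\le\Lambda$ forces $\theta_A(y,1)\le\Lambda$ for all $y\in B_1(p)$ (by the monotonicity formula \eqref{mono}, $\theta_A(y,1)\le\theta_A(y,8)\le 8^{4-n}\int_{B_8(y)}|F_A|^2\le C(n)\Lambda$; after adjusting the constant one may as well take the bound to be $\Lambda$), both properties in \eqref{indhold} hold with $U_r^0=\emptyset$, $U_+^0=B_1(p)$. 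For the inductive step, assume we have the covering $S^k_{\epsilon,r}(A)\subseteq U_r^l\cup U_+^l$ with the stated properties. The sets $B_r(x_i^{r,l})$ already have radius $r$ and are harmless, so we leave them alone and fold them into $U_r^{l+1}$. The work is to subdivide each ball $B_{r_i^l}(x_i^l)$ of $U_+^l$ further.

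First I would apply Lemma \ref{keylemma}, suitably rescaled, on each ball $B_{r_i^l}(x_i^l)$. After rescaling $B_{r_i^l}(x_i^l)$ to unit size, the connection is still stationary with energy bound $\le\Lambda$, and on it $E=\sup_{y\in B_{r_i^l}(x_i^l)\cap S^k_{\epsilon,r}(A)}\theta_A(y,r_i^l)\le\Lambda-l\eta$ by the inductive hypothesis. Lemma \ref{keylemma} then produces, for the fixed $\eta\le\eta(n,\Lambda,\epsilon)$, a covering $S^k_{\epsilon,r}(A)\cap B_{r_i^l}(x_i^l)\subseteq U^{(i)}_{r}\cup U^{(i)}_+$ where $U^{(i)}_+=\bigcup_j B_{s_{ij}}(y_{ij})$ with $s_{ij}>r$ and $\sum_j (s_{ij}/r_i^l)^k\le C(n,\Lambda,\epsilon)$ (the scaling-invariant form of part (1)), with $\sup_{y\in B_{s_{ij}}(y_{ij})\cap S^k_{\epsilon,r}(A)}\theta_A(y,s_{ij})\le E-\eta\le\Lambda-(l+1)\eta$ (part (2), which is exactly the energy drop we need), and with $U^{(i)}_r$ covered by at most $C(n)(r_i^l/r)^k$ balls of radius $r$ (part (3), again rescaled). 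Defining $U_+^{l+1}=\bigcup_{i,j}B_{s_{ij}}(y_{ij})$ and $U_r^{l+1}=\bigcup_i B_r(x_i^{r,l})\cup\bigcup_i U^{(i)}_r$, the second property in \eqref{indhold} holds at level $l+1$ by construction.

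It remains to check the content bound. For the $U_+^{l+1}$ part, $\sum_{i,j}(s_{ij})^k=\sum_i (r_i^l)^k\sum_j (s_{ij}/r_i^l)^k\le C(n,\Lambda,\epsilon)\sum_i (r_i^l)^k\le C(n,\Lambda,\epsilon)\cdot C(l,n,\Lambda,\epsilon)=:C(l+1,n,\Lambda,\epsilon)$, using the inductive bound on $\sum_i(r_i^l)^k$. For the $U_r^{l+1}$ part, the number of radius-$r$ balls is at most $\sum_i C(n)(r_i^l/r)^k+(\text{the old count})\le C(n)r^{-k}\sum_i(r_i^l)^k+C(l,n,\Lambda,\epsilon)r^{-k}\le C(l+1,n,\Lambda,\epsilon)r^{-k}$, so $r^{k-n}\mathrm{Vol}(B_r(U_r^{l+1}))\le r^{k-n}\cdot C(l+1,n,\Lambda,\epsilon)r^{-k}\cdot\omega_n r^n=C(l+1,n,\Lambda,\epsilon)$. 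This closes the induction. (When $r=0$ one uses part (4) of Lemma \ref{keylemma} in place of (3), replacing the ball count by the rectifiability and Minkowski estimates for $U_0$, which behave additively in the same way under the finite union over $i$.)

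The main obstacle is purely bookkeeping: one must verify that all the constants generated at each stage depend only on $(l,n,\Lambda,\epsilon)$ and not, say, on the number of balls in $U_+^l$ — which is guaranteed precisely because Lemma \ref{keylemma}(1) gives a \emph{scale-invariant} summable content bound $\sum_j(s_{ij}/r_i^l)^k\le C(n,\Lambda,\epsilon)$ that is uniform over $i$, so that the multiplicative recursion $C(l+1,\cdot)=C(n,\Lambda,\epsilon)\,C(l,\cdot)$ closes. There is no genuinely hard analytic content here beyond what is already packaged into Lemma \ref{keylemma}; the point of Claim \ref{indomit} is that after at most $\sim\Lambda/\eta$ applications the energy drops below the $\epsilon$-regularity threshold, at which stage $S^k_{\epsilon,r}(A)$ has no points left in $U_+$, and one is left with the pure radius-$r$ cover $U_r$ giving the Minkowski estimate \eqref{maininequality} of Theorem \ref{maintheorem}; this final extraction is carried out in the remainder of Section \ref{pomt} following \cite{NV17}.
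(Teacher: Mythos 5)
Your proposal is correct and follows essentially the same route as the paper: induction on $l$, with the inductive step given by applying Lemma \ref{keylemma} (rescaled) to each ball $B_{r_i^l}(x_i^l)$ of $U_+^l$, using its scale-invariant content bound to close the multiplicative recursion for the constants and its energy-drop property (2) to lower the threshold by $\eta$ at each stage. The only cosmetic difference is that you start the induction at $l=0$ with the trivial cover $B_1(p)$ (where in fact $\theta_A(y,1)\le\Lambda$ holds directly, no constant adjustment needed), whereas the paper starts at $l=1$ via Lemma \ref{keylemma}; the final extraction of Theorem \ref{maintheorem} is exactly as you describe.
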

The proof of Theorem \ref{maintheorem} will then follow from the claim. Indeed, let us take $l$ to be the least integer that $l\ge \frac{\Lambda}{\eta}+1$, and we apply above claim to such $l$. Then by the second property of \eqref{indhold} we see that $U_+^l=\emptyset$. Therefore $S^k_{\epsilon,r}(A)\subseteq U_r^l= \bigcup B_r(x_i^{l,r})$. Now by the first property of \eqref{indhold} we have $\text{Vol}(B_r(S^k_{\epsilon,r}(A)))\le C(l,n,\Lambda,\epsilon)\text{Vol}(B_r(U_r^l))\le C(n,\Lambda,\epsilon)r^{n-k}$. The proof of Claim \ref{indomit} goes by induction. Notice that the beginning stage $l=1$ follows from Lemma \ref{keylemma}. Thus let us assume we have proved Claim \ref{indomit} for some $l\ge 1$. To build the desired covering for stage $l+1$ we simply apply Lemma \ref{keylemma} by replacing $B_1(p)$ therein by $B_{r_i^l}(x_i^l)$ for each $i$. Then we obtain:
\begin{eqnarray}
\begin{split}
S^k_{\epsilon,r}\cap B_{r^l_i}(x^l_i)\subseteq U_{i,r}= \bigcup_l B_{r}(x_{i,l}^{r})\cup \bigcup_l B_{r_{i,l}}(x_{i,l}),\ \text{for all }i.
\end{split}
\end{eqnarray}
Now let us set $U_r^{l+1}=U_{r}^l\cup \bigcup_i U_{i,r}$ and $U_+^{l+1}=\bigcup_{i,l} B_{r_{i,l}}(x_{i,l})$. It is not hard to check that this is a desired cover for stage $l+1$. Thus finishes the proof of Claim \ref{indomit}. The Minkowski volume estimate \eqref{maininequality1} follows similarly. For the rest conclusions in the main theorems on the structure of the sets, they follow from Property (4) of Lemma \ref{keylemma} as well as standard geometric measure theory arguments; see the next subsection for their references.

\subsection{Proofs of Theorems \ref{maintheorem}, \ref{maintheorem1}, and \ref{maintheorem2}}
\begin{proof}
Up to necessary changes of notations, the proof of Theorem \ref{maintheorem} follows the lines presented in Subsection 9.1, pp. 216-218, \cite{NV17}, the proof of Theorem \ref{maintheorem1} follows the lines presented in Subsection 9.2, pp. 218-221, \cite{NV17}, and Theorem \ref{maintheorem2} follow the lines presented in Subsection 9.3, pp. 221-222, \cite{NV17}. Since all the proofs follow the corresponding arguments almost verbatim, we refer the readers to those lines in \cite{NV17} for details.
\end{proof}

\section{Application to the stationary Yang-Mills connections}\label{atsymc}
In this section, we will present an application of Theorems \ref{maintheorem}, \ref{maintheorem1}, and \ref{maintheorem2} to the admissible stationary Yang-Mills connections defined in Subsection 2.3 of \cite{T00}. For the readers' convenience, let us recall its definition. Let $A$ be a stationary connection on $M$. We say that $A$ is an admissible stationary Yang-Mills connection, if further there exists a closed subset $S(A)\subseteq M$ such that (1) $H^{n-4}(S(A)\cap K)<\infty$ for every compact set $K\subseteq M$; (2) $A$ is smooth and satisfies Yang-Mills equation outside $S(A)$.

For convenience let us assume the base manifold $M$ to be a topologically trivial geodesic ball $B_{16}(p)$ satisfying the condition at the beginning of Subsection \ref{mainresults}. Let $\{A_i\}_i$ be a sequence of admissible stationary Yang-Mills connections with $\|F_{A_i}\|_{L^2(B_{16}(p))}\le \Lambda$. Then up to a subsequence we could assume that $\{|F_{A_i}|^2d\text{Vol}_g\}$ converges as measures to some Radon measure $\mu$ in the weak-$*$ sense. By \cite{T00}, $\mu=|F_{A_{\infty}}|^2dV_g+\nu$, where $\nu$ (called the defect measure) is a positive Radon measure whose support $S$ has finite $H^{n-4}$-measure and is $n-4$-rectifiable; moreover, there exists a subsequence $\{i_l\}_l$ and a sequence of gauge transforms $\{\sigma_l\}_l$ that are smooth outside $S$, such that $\sigma_l^*A_{i_l}$ converges to $A_{\infty}$ smoothly outside $S$. From now on, we shall refer to this convergence as ``$A_i$ weakly converges to $(A_{\infty},\mu)$ with blow-up locus $S$''. We are interested in studying the stratification of the singular set of $A_{\infty}$. However, it might be the case that $A_{\infty}$ is no longer stationary. As a matter of fact, we might not be able to apply the theorems in the Subsection \ref{mainresults} directly to $A_{\infty}$. Nevertheless, we can prove results about a weaker stratification of $A_{\infty}$. Firstly, let us state the definition of tangent connection:
\begin{definition}[Tangent connection]
Let $A$ be an admissible stationary Yang-Mills connection. $A_{x_0}$ is called a tangent connection of $A$ at $x_0$, if there is a positive real number sequence $\lambda_i\to 0$ such that $A_{\lambda_i}(x)$ weakly converges to $(A_y,\mu)$; here $A_{\lambda_i}(x)\eqqcolon \lambda_i^{-1}A(\lambda_i(x-x_0)+x_0)$.
\end{definition}
A variant of Lemma 5.3.1 [G. Tian] shows that every tangent connection of $A_{\infty}$ is $0$-symmetric; in other words, it is  gauge equivalent to the pull back of a connection on the unit sphere $S^{n-1}$. For $A_{\infty}$, we have
\begin{definition}
For each $k=0,\cdots n-1$, the $k^{th}$ weak-stratum $\mathcal{W}\mathcal{S}^k(A_{\infty})$ is defined to be:
\begin{eqnarray}
\begin{split}
\mathcal{W}\mathcal{S}^k(A_{\infty})&\eqqcolon \bigg{\{}y\in B_1(p): \mbox{no tangent connection of $A_{\infty}$ at $y$ is $k+1$-symmetric}\bigg{\}}.
\end{split}
\end{eqnarray}
\end{definition}

\begin{theorem}
\label{coro1}
Let $\{A_i\}_i$ be a sequence of admissible stationary Yang-Mills connections with 
$$\int_{B_{16}(p)}|F_{A_i}|^2d\text{Vol}_g\le \Lambda,$$ 
such that $A_i$ weakly converges to $(A_{\infty},\mu_0)$. Then for each $k=0,\cdots,n-4$ and $\epsilon>0$, there exists a constant $C(n,\Lambda,\epsilon)$, such that for any $r>0$ we have
\begin{eqnarray}
\begin{split}
\label{mumin}
&\mbox{Vol}(B_r(S_{\epsilon}^{k}(\mu))\cap B_1(p))\le C(n,\Lambda,\epsilon)r^{n-k}.
\end{split}
\end{eqnarray}
Moreover, $\mathcal{W}\mathcal{S}^k(A_{\infty})\subseteq S^k(\mu)$, and $\mathcal{W}\mathcal{S}^k(A_{\infty}), S^k(\mu)$ are both $k$-rectifiable.
\end{theorem}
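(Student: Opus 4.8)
The plan is to deduce Theorem \ref{coro1} from the measure versions of Theorems \ref{maintheorem}--\ref{maintheorem2} applied to the limit measure $\mu$, together with a blow-up comparison between $\mu$ and $A_{\infty}$; since the statement is local we may first rescale the ambient geodesic ball so that $K_M\le\delta(n,\Lambda,\epsilon)$, as Theorems \ref{l2ap} and \ref{quandimred} require. The starting observation is that $\mu$ lies within the scope of the whole machinery of Sections \ref{l2apth}--\ref{pomt}: since $\mu$ is the weak-$*$ limit of the curvature densities $|F_{A_i}|^2dV_g$ of stationary connections $A_i\in\mathcal{A}(\Lambda)$, we have $\theta_{A_i}(x,s)\to\theta_\mu(x,s)$ at continuity points of $s\mapsto\mu(B_s(x))$, hence $\theta_\mu(x,\cdot)$ is monotone, so Definitions \ref{conetipmu} and \ref{keydefmu} and the strata $S^k_{\epsilon,r}(\mu)$, $S^k_\epsilon(\mu)$, $S^k(\mu)$ are all meaningful and Theorem \ref{quandimred} holds for $\mu$ (as already noted after its statement). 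The one ingredient whose proof genuinely uses a connection form rather than $\theta$ is the linear-algebra step in the proof of Theorem \ref{l2ap} (the map $v\mapsto\iota_vF_A(z)$), so the only real point is to see that Theorem \ref{l2ap} persists for $\mu$.

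For that I would argue through the approximating connections. A short compactness argument shows that if $\mu$ is $(0,\delta)$-symmetric in $B_{8r}(p)$ but not $(k+1,\epsilon)$-symmetric in $B_r(p)$, then (using $\theta_{A_i}\to\theta_\mu$, the closedness of the effective-spanning condition, and the monotonicity of $\theta$ to trade between radii, after harmless adjustments of the parameters) for all large $i$ the connection $A_i$ is $(0,2\delta)$-symmetric in $B_{8r}(p)$ but not $(k+1,\epsilon/2)$-symmetric in $B_r(p)$. Applying the connection version of Theorem \ref{l2ap} to $A_i$ and letting $i\to\infty$ then gives the estimate for $\mu$; the extra curvature term $r^{2-n}\int_{B_{\epsilon_1 r}(x)}|\iota_{x-z}F_{A_i}|^2\,dV_g$ converges, by the monotonicity formula \eqref{mono}, to $\int_{B_{\epsilon_1 r}(x)}|x-z|^{n-2}\,d\sigma^\mu_x(z)$, where $\sigma^\mu_x$ is the radial energy measure with $\sigma^\mu_x(A_{\sigma,\rho}(x))=\tfrac14(\theta_\mu(x,\rho)-\theta_\mu(x,\sigma))$, and this is $\le C\epsilon_1^{n-2}(\theta_\mu(x,\epsilon_1 r)-\lim_{t\to0}\theta_\mu(x,t))$, exactly the form needed for the dyadic summability estimate \eqref{dyaest}. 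Granting Theorem \ref{l2ap} for $\mu$, the inductive covering Lemma \ref{keylemma} and the iteration of Section \ref{pomt} go through verbatim, with all constants depending only on $n,\Lambda,\epsilon$ since those in the main theorems are uniform over $\mathcal{A}(\Lambda)$; this yields the Minkowski bound \eqref{mumin} and the $k$-rectifiability of each $S^k_\epsilon(\mu)$, whence $S^k(\mu)=\bigcup_{j\ge1}S^k_{1/j}(\mu)$ is $k$-rectifiable as a countable union of $k$-rectifiable sets.

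Next I would prove $\mathcal{W}\mathcal{S}^k(A_{\infty})\subseteq S^k(\mu)$ by contraposition. If $y\notin S^k(\mu)$, then some tangent measure $\nu$ of $\mu$ at $y$ is $(k+1)$-symmetric in the sense of Definition \ref{symdef1}, equivalently of Definition \ref{keydefmu}(1) by Claim \ref{introproof}; pick $\lambda_j\to0$ with $\mu_{y,\lambda_j}\rightharpoonup\nu$, where $\mu_{y,\lambda}$ is the rescaling normalized to preserve $\theta$. Writing $\mu=|F_{A_{\infty}}|^2dV_g+(\text{defect})$ as in \cite{T00}, one checks $\mu_{y,\lambda_j}=|F_{(A_{\infty})_{y,\lambda_j}}|^2dV_g+(\text{defect})_{y,\lambda_j}$ with $(A_{\infty})_{y,\lambda_j}(x)=\lambda_j^{-1}A_{\infty}(\lambda_j(x-y)+y)$, and that the energies $\int_{B_R}|F_{(A_{\infty})_{y,\lambda_j}}|^2dV_g=R^{n-4}\theta_{A_{\infty}}(y,\lambda_j R)$ are uniformly bounded. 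By the compactness and structure theory of \cite{T00}, in the variant valid for $A_{\infty}$ (whose curvature measure $\mu$ obeys the monotonicity formula although $A_{\infty}$ itself need not be stationary, and which already underlies the fact that every tangent connection of $A_{\infty}$ is $0$-symmetric), a subsequence of $(A_{\infty})_{y,\lambda_j}$ converges weakly to a tangent connection $(A_y,\nu')$ of $A_{\infty}$ at $y$, with $\nu'=|F_{A_y}|^2dV_g+(\text{defect})$; uniqueness of weak-$*$ limits forces $\nu'=\nu$. Thus $\nu$ is the curvature measure of the tangent connection $A_y$, and since $\nu$ is $(k+1)$-symmetric so is $A_y$, that is, $y\notin\mathcal{W}\mathcal{S}^k(A_{\infty})$. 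The $k$-rectifiability of $\mathcal{W}\mathcal{S}^k(A_{\infty})$ then follows at once, being a subset of the $k$-rectifiable set $S^k(\mu)$, as $k$-rectifiability in the sense of Theorems \ref{maintheorem1} and \ref{maintheorem2} passes to arbitrary subsets.

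The main obstacle is the transfer of the quantitative-stratification machinery to $\mu$, which is not the curvature density of any connection in $\mathcal{A}(\Lambda)$: one must re-run the $L^2$-best approximation theorem, and hence Lemma \ref{keylemma}, through the sequence $\{A_i\}$ and verify that the error terms pass to the limit with the structure needed for dyadic summability. A secondary difficulty is the blow-up comparison, namely that every tangent measure of $\mu$ at $y$ is the curvature measure of some tangent connection of $A_{\infty}$ at $y$; here one must handle the non-stationarity of $A_{\infty}$ and lean on the variant structure theory of \cite{T00}, possibly realizing the tangent connection via a diagonal argument on the double-indexed family $\{(A_i)_{y,\lambda_j}\}$.
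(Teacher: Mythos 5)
Your first half — transferring the quantitative machinery to $\mu$ through the approximating connections $A_i$ (if $\mu$ is $(0,\delta)$- but not $(k+1,\epsilon)$-symmetric then $A_i$ is $(0,2\delta)$- but not $(k+1,\epsilon/2)$-symmetric for large $i$, apply Theorem \ref{l2ap} to $A_i$, keep the dyadic summability of the extra curvature term as in \eqref{dyaest}, then run the covering lemma and the iteration) is essentially the paper's proof of Lemma \ref{keylemmamu} and of \eqref{mumin}; the only cosmetic difference is that the paper does not pass to the limit inside the inequality but keeps $F_{A_i}$ with a finite scale cutoff $K_*$ (estimate \eqref{ppp} for $2^{-K_*}\le 2^{-\beta}\le s$, $i\ge i(K_*)$) and then lets $K_*\to\infty$, which sidesteps the boundary-mass issues in your claimed convergence of the radial term.

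The genuine gap is in the inclusion $\mathcal{W}\mathcal{S}^k(A_{\infty})\subseteq S^k(\mu)$. First, your assertion that ``uniqueness of weak-$*$ limits forces $\nu'=\nu$'' is false as stated: $\mu_{y,\lambda_j}=|F_{(A_{\infty})_{y,\lambda_j}}|^2dV_g+(\text{defect})_{y,\lambda_j}$, and the rescaled defect of $\mu$ does not disappear in the limit (it certainly survives when $y$ lies in the support of the defect measure, which is allowed for $y\notin S^k(\mu)$); so the tangent measure $\nu$ equals the curvature measure of the blow-up limit \emph{plus} the limit of the rescaled original defect, not the pair-measure $\nu'$ alone. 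Second, and more importantly, the step ``since $\nu$ is $(k+1)$-symmetric so is $A_y$'' is exactly the nontrivial point and you give no mechanism for it: because $\nu$ contains a singular part, constancy of $\theta_\nu$ along the spanning cone tips does not by itself control $F_{A_y}$; the paper derives it from Tian's identity (5.3.4), which splits $\theta_\eta(p_\alpha,2s)-\theta_\eta(p_\alpha,2\epsilon s)$ into the nonnegative contribution $\int_{\tilde S}|x-p_\alpha|^{4-n}|\nabla^{\bot}r_\alpha|^2\tilde\Theta\,dH^{n-4}$ of the rectifiable defect plus $4\int|x-p_\alpha|^{4-n}|\iota_{\partial r_\alpha}F_{A_{x_0}}|^2dV_g$, so that symmetry of the tangent measure kills both terms and yields \eqref{fatou0}. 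To invoke that identity you need the limit to arise from a sequence of \emph{stationary admissible Yang--Mills} connections; since $A_\infty$ is not known to be stationary, rescaling $A_\infty$ directly and appealing to a ``variant'' theory is precisely what must be justified. The paper avoids this by blowing up the diagonal sequence $A_{i_l,\lambda_l}$ of the original connections (to which Tian's Theorems 3.1.2, 3.3.3 and (5.3.4) apply verbatim) and only afterwards identifying the limit $A_{x_0}$ as a tangent connection of $A_\infty$ via the smooth convergence off the blow-up locus — the diagonal route you relegate to a parenthetical fallback is in fact the argument that works.
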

\begin{remark}
This theorem extends Proposition 3.3.3 of \cite{T00}. 
\end{remark}
\begin{proof}
Firstly, we need a covering lemma similar to Lemma 8.1 of \cite{NV17}, with exactly the same statement upon replacing $A$ by $\mu_0$:
\begin{lemma}
\label{keylemmamu}
Let $A$ be stationary connection satisfying the same conditions as Theorem \ref{maintheorem}. Let $E=\sup_{x\in B_1(p)\cap S^k_{\epsilon}(\mu_0)}\theta_{\mu_0}(x,1)$. Then for all $\eta \le \eta(n,\Lambda,\epsilon)$, there exists a covering $S^k_{\epsilon}(\mu)\cap B_1(p)\subseteq U= U_0\cup U_+$ such that:\\
(1) $U_+=\bigcup_i B_{r_i}(x_i)$ with $r_i> 0$ and $\sum_i r_i^k\le C(n,\Lambda, \epsilon)$;\\
(2) $\sup_{y\in B_{r_i}(x_i)\cap S^k_{\epsilon}(\mu_0)}\theta_{\mu_0}(y,r_i)\le E-\eta$;\\
(3) $U_0$ is $k$-rectifiable and satisfies $\text{Vol}(B_s(U_0))\le C(n) s^{n-k}$ for each $s>0$; in particular $H^k(U_0)\le C(n).$
\end{lemma}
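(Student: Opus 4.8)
The plan is to mimic the proof of Lemma \ref{keylemma} essentially verbatim, with the single modification that throughout we replace the connection quantity $\theta_A(\cdot,\cdot)$ by the measure-theoretic quantity $\theta_{\mu_0}(\cdot,\cdot)$, and replace every appeal to a result about stationary connections by its measure-theoretic counterpart as indicated in the remarks following Theorem \ref{quandimred}. Concretely, the Radon measure $\mu_0$ arises as a weak-$*$ limit $|F_{A_i}|^2\,d\text{Vol}_g\to\mu_0$ of energy densities of stationary connections, and hence for $\mu_0$ one has: (i) a monotonicity formula for $\theta_{\mu_0}(x,r)=r^{4-n}\mu_0(B_r(x))$, inherited from \eqref{mono} by passing to the limit; (ii) the notions of $(\epsilon,r)$-cone tip and $(k,\epsilon)$-symmetry of Definitions \ref{conetipmu} and \ref{keydefmu}; and (iii) the quantitative dimension reduction of Theorem \ref{quandimred}, which as noted in the excerpt holds with $A$ replaced by $\mu_0$ by the same weak-$*$ convergence argument. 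First I would restate the $L^2$-best approximation theorem, Theorem \ref{l2ap}, for $\mu_0$: since the proof of Theorem \ref{l2ap} proceeds through the linear transformation $T\nu=\int\langle x,\nu\rangle x\,d\mu(x)$ and the energy lower bound of Lemma \ref{l2appp}, both of which involve only $\theta$-type quantities that pass to the weak-$*$ limit, the inequality \eqref{l2app} carries over with $W_r$ defined via $\theta_{\mu_0}$ and $|\iota_{x-z}F_A|^2\,dV_g$ replaced by the appropriate radial part of $\mu_0$ in $B_{\epsilon_1 r}(x)$; alternatively one applies Theorem \ref{l2ap} to each $A_i$ and passes to the limit, using that $D^k_\mu$ is lower semicontinuous under weak-$*$ convergence.

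Granting the measure-theoretic $L^2$-best approximation estimate, the proof of Lemma \ref{keylemmamu} then follows the proof of Lemma \ref{keylemma} line by line. One performs the same inductive covering construction (the ``$\overline{r}_y$'' stopping-time argument imported from Section 8 of \cite{NV17}), one forms the discrete measure $\mu'=\sum r_i^k\delta_{x_i}$ associated to each stage, one applies Theorem \ref{quandimred} (for measures) exactly where Theorem \ref{quandimred} is applied in the proof of Lemma \ref{keylemma}, and one carries out the summation estimates \eqref{lll} and \eqref{dyaest}, the latter using the monotonicity formula for $\theta_{\mu_0}$ to see that the extra term coming from the radial curvature contributions is dyadically summable and hence controlled by $C(n,\Lambda)\epsilon_1^{n-2}$. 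Choosing $\epsilon_1\equiv\epsilon_1(n,\Lambda,\epsilon)$ small and then $\eta'\le\eta'(n,\Lambda,\epsilon)$ small gives the hypothesis \eqref{allscale1} of the discrete rectifiable-Reifenberg theorem (Theorem \ref{DRR}) at every stage, yielding the content bound $\sum r_i^k\le C(n,\Lambda,\epsilon)$ for $U_+$ and, in the ``$r=0$'' part of the argument, the rectifiability of $U_0$ together with the Minkowski estimate $\text{Vol}(B_s(U_0))\le C(n)s^{n-k}$ via Theorem \ref{RR}. The energy-drop conclusion (2) is automatic from the stopping-time construction exactly as in \cite{NV17}.

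Once Lemma \ref{keylemmamu} is in hand, the Minkowski estimate \eqref{mumin} follows by the induction-on-energy / iterated-covering scheme of Subsection \ref{outlineind}, applied to $\mu_0$ in place of $A$: one iterates Lemma \ref{keylemmamu} at most $\lceil\Lambda/\eta\rceil+1$ times, after which the ``$U_+$'' part is empty and $S^k_\epsilon(\mu_0)$ is covered by controllably many balls, giving $\text{Vol}(B_r(S^k_\epsilon(\mu_0))\cap B_1(p))\le C(n,\Lambda,\epsilon)r^{n-k}$; the rectifiability of $S^k_\epsilon(\mu_0)$ (hence of $S^k(\mu_0)=\bigcup_\epsilon S^k_\epsilon(\mu_0)$) follows from part (3) of Lemma \ref{keylemmamu} plus the standard geometric-measure-theory arguments of Subsections 9.2--9.3 of \cite{NV17}. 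Finally, the inclusion $\mathcal{WS}^k(A_\infty)\subseteq S^k(\mu_0)$ is a soft point: if $y\notin S^k(\mu_0)$ then some tangent measure of $\mu_0$ at $y$ is $(k+1)$-symmetric, and along the rescalings realizing that tangent measure the gauge-transformed connections $\sigma_l^*A_{\lambda_l}$ converge (smoothly away from the blow-up locus $S$, which has codimension $4$ and so cannot fill a $(k+1)$-plane when $k\le n-4$) to a tangent connection $A_y$ whose curvature density equals that tangent measure; hence $A_y$ is $(k+1)$-symmetric as a connection, so $y\notin\mathcal{WS}^k(A_\infty)$, giving $\mathcal{WS}^k(A_\infty)\subseteq S^k(\mu_0)$ and therefore the $k$-rectifiability of $\mathcal{WS}^k(A_\infty)$ as well. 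The main obstacle I anticipate is bookkeeping: verifying carefully that every step in the (already intricate) proof of Lemma \ref{keylemma} — in particular the stopping-time covering and the passage from \eqref{model1}--\eqref{model2} to Theorem \ref{DRR} — genuinely depends only on $\theta$-type quantities that survive the weak-$*$ limit, and that the dimension-reduction input Theorem \ref{quandimred} is available for $\mu_0$ with the same constants; no genuinely new analytic difficulty should arise beyond what is already handled in Sections \ref{l2apth} and \ref{cutoff}.
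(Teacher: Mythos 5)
Your overall scheme coincides with the paper's: run the proof of Lemma \ref{keylemma} (i.e.\ Lemma 8.1 of \cite{NV17}) with two substitutions --- Theorem \ref{quandimred} stated for the limit measure, and an $L^2$-best approximation input --- and conclude with Theorems \ref{DRR} and \ref{RR}; property (2) comes for free from the stopping-time construction. The weak point is how you supply the $L^2$-best approximation in the measure setting. Your primary suggestion, to restate Theorem \ref{l2ap} for $\mu_0$ with ``$|\iota_{x-z}F_A|^2dV_g$ replaced by the appropriate radial part of $\mu_0$,'' does not go through as stated: the proof of Theorem \ref{l2ap} acts pointwise with the map $\nu\mapsto\iota_{\nu}F_A(z)$ and rests on the curvature lower bound of Lemma \ref{l2appp}, and a general weak-$*$ limit $\mu_0$ has neither a pointwise curvature nor a canonical ``radial part'' (formulating one would already require Tian's structure theory for the defect measure), so this is a genuine gap if taken as the main route.

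Your fallback --- apply Theorem \ref{l2ap} to each $A_i$ and pass to the limit --- is what the paper actually does, but the mechanism is not ``lower semicontinuity of $D^k_\mu$ under weak-$*$ convergence'': the measure inside $D^k_\mu$ is the fixed covering measure, not the varying energy measures, so nothing on the left-hand side needs to be passed to a limit. The real subtlety is that the hypotheses of Theorem \ref{l2ap} (that $A_i$ be $(0,2\delta)$-symmetric but not $(k+1,\epsilon/2)$-symmetric) transfer from $\mu_0$ to $A_i$ only for $i$ large depending on the balls and scales involved. The paper handles this by truncating the dyadic scales at a fixed $K_*$, choosing $i\ge i(K_*)$, applying Theorem \ref{l2ap} to $A_i$ on each $B_{2^{-\beta}}(y)$ with $2^{-K_*}\le 2^{-\beta}\le s$ (equation \eqref{ppp}), running \eqref{long1}--\eqref{dyaest} for $A_i$ so that the bound $C(n,\Lambda,\epsilon)\epsilon_1^{-1}\eta'+C(n,\Lambda)\epsilon_1^{n-2}\le\delta(n)^2$ is uniform in $i$ and $K_*$, and then letting $K_*\to\infty$ by monotone convergence of the partial sums. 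With that truncation argument your outline becomes the paper's proof; without it the limit passage is unjustified as written. (Your closing paragraph on $\mathcal{W}\mathcal{S}^k(A_\infty)\subseteq S^k(\mu)$ is outside the lemma, but note that the tangent measure is not the curvature density of the tangent connection --- there is a defect part --- and the paper deduces the $(k+1)$-symmetry of $A_{x_0}$ from (5.3.4) of \cite{T00}, not from the blow-up locus being small.)
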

The proof of this lemma follows almost the same arguments as that of Lemma 8.1 of \cite{NV17}, and property (2) follows trivially from the construction of the cover given after Remark 8.1 of \cite{NV17}. However there are two major modifications we need to make to the proof of Lemma 8.1 of \cite{NV17}. The first one is in the third paragraph preceding Subsection 8.2, p. 211. Instead of Theorem 2.4 of \cite{NV17}, we apply Theorem \ref{quandimred} (stated for $\mu_0$ instead of $A$) herein (see the paragraph after Theorem \ref{quandimred}). The second difference occurs when we apply the $L^2$-best approximation theorem in obtaining (1) and (3) of Lemma \ref{keylemmamu}. Let us take (3) for example, and note the same modifications apply to (1). If $\mu$ is $(0,\delta)$-symmetric in $B_{16}(p)$ but not $(k+1,\epsilon)$-symmetric in $B_1(p)$, then by the weak-$*$ convergence $|F_{A_i}|^2dV_g\to\mu_0$ we have for all $i$ sufficiently large that $A_i$ is $(0,2\delta)$-symmetric in $B_{16}(p)$ but not $(k+1,\epsilon/2)$-symmetric in $B_1(p)$. Now let us fix arbitrary sufficiently large integer $K_*\gg 1$. Then, in the line before (8.46) of \cite{NV17}, we apply Theorem \ref{l2ap} with $A$, $B_1(p)$, $\epsilon$, and $\delta$ replaced by $A_i$, $B_{2^{-\beta}}(y)$, $\epsilon/2$, and $2\delta$ respectively, where $i\ge i(K_*)$ is sufficiently large, $2^{-K_*}\le 2^{-\beta}\le s$, with $y$, $s$ and $\mu$ are the same as in (8.48) of \cite{NV17}. We then obtain:
\begin{eqnarray}
\begin{split}
\label{ppp}
D^k_{\mu}(y,2^{-\beta})\le C(n,\Lambda,\epsilon)(2^{\beta})^{-k}\int_{B_{2^{-\beta}}(y_j)}\bigg{(}W_s(z)+(2^{-\beta})^{2-n}\int_{B_{\epsilon_1 2^{-\beta}}(z)}|\iota_{z-x}F_{A_i}|^2dV_g(x)\bigg{)}d\mu(z).
\end{split}
\end{eqnarray}
Then we take the sum of \eqref{ppp} over all such $\beta$ and follow the same estimates \eqref{long1}-\eqref{model2} to obtain
\begin{eqnarray}
\begin{split}
\sum_{2^{-K_*}\le 2^{-\beta}\le s}s^{-k}\int_{B_s(x)}D^k_{\mu}(y,2^{-\beta})d\mu(y)\le \delta(n)^2.
\end{split}
\end{eqnarray}
By the arbitrariness of $K_*$, we therefore have
\begin{eqnarray}
\begin{split}
\sum_{2^{-\beta}\le s}s^{-k}\int_{B_s(x)}D^k_{\mu}(y,2^{-\beta})d\mu(y)\le \delta(n)^2.
\end{split}
\end{eqnarray}
Now we can apply Theorem \ref{RR} to see that $U_0$ is rectifiable. Thus finishes the proof of (3) of Lemma \ref{keylemmamu}. The same modifications given above apply to conclude (1) as well. After proving Lemma \ref{keydefmu}, we could then follow exactly the same arguments in Subsections 9.1 and 9.2 of \cite{NV17} to obtain the rectifiability of $S^k_{\epsilon}(\mu_0)$ as well as \eqref{mumin}.

Now it remains to show $\mathcal{W}\mathcal{S}^k(A_{\infty})\subseteq S^k(\mu)$. It suffices to prove $\mathcal{W}\mathcal{S}^k(A_{\infty})^c\supseteq S^k(\mu)^c$. Let $x_0$ be a point at which $\mu$ has a $(k+1)$-symmetric tangent measure $\eta$. By definition, there exist positive real number sequences $\lambda_l\to0$ and $\epsilon_l\to0$ such that for each $i$, there exist $(\epsilon_l,1)$-cone tips of $\mu_{\lambda_l}$ in $B_{1/2}(x_0,\lambda_l^{-2}g)$, denoted by $x_0,p_1^{(i)},\cdots,p_{k+1}^{(i)}$, that $\tau(n)$-effectively span a $k+1$-plane; up to passing to subsequence we may assume that $p_\alpha^{(i)}\to p_\alpha$ as $i\to\infty$ for each $\alpha=1,\cdots,k+1$, and that they span some $k+1$-plane $V_{k+1}$. Further by the weak-$*$ convergence $|F_{A_i}|^2dV_g\to\mu$, there exists a subsequence $\{A_{i_l}\}_{l}$ such that $|F_{A_{i_l,\lambda_{l},x_0}}|^2d\text{Vol}_{\lambda_l^{-1}g}\to \eta$ in the weak-$*$ sense. By Theorem 3.1.2 and Theorem 3.3.3 of \cite{T00}, $A_{i_l,\lambda_l}$ converge weakly to some pair $(A_{x_0},\tilde{\nu})$ with blow-up locus $\tilde{S}$, such that $\tilde{S}$ is $(n-4)$-rectifiable, $\tilde{\nu}=\tilde{\Theta} dH^{n-4}\big{\rvert}_{\tilde{S}}$ for some positive function $\tilde{\Theta}$, and $\eta=|F_{A_{x_0}}|^2d\text{Vol}+\tilde{\nu}$. On the one hand, by (5.3.4) of \cite{T00} we achieve the following identity for all $s>0$ and each $\alpha=0,\cdots,k+1$:
\begin{eqnarray}
\begin{split}
\label{fatou}
&\int_{\tilde{S}\cap A_{2\epsilon s,2s}(p_{\alpha})}|x-p_\alpha|^{4-n}|\nabla^{\bot}r_\alpha|^2\tilde{\Theta} dH^{n-4}(x)+4\int_{A_{2\epsilon s,2s}(p_\alpha)}|x-p_\alpha|^{4-n}|\iota_{\partial r_{\alpha}}F_{A_{x_0}}|^2dV_g\\
=&\theta_{\eta}(p_\alpha,2s)-\theta_{\eta}(p_\alpha,2\epsilon s)=0,
\end{split}
\end{eqnarray}
where $\nabla^{\bot}r_{p_\alpha}$ denotes the component of $\nabla r_{p_\alpha}$ that is perpendicular to the tangent space of $\tilde{S}$; by the $(n-4)$-rectifiability of $\tilde{S}$, $\nabla^{\bot}r$ is $H^{n-4}$-a.e. well defined on $S$; the last identity in \eqref{fatou} follows from the $(k+1)$-symmetry of $\eta$. Especially, this implies that:
\begin{equation}
\label{fatou0}
\iota_{\partial r_{p_\alpha}}F_{A_{x_0}}(x)=0,\ \text{for all } x\in \mathbb{R}^n, \alpha=0,\cdots,k+1.
\end{equation}
Therefore $A_{x_0}$ is a $(k+1)$-symmetric connection. On the other hand, due to the facts that $|F_{A_i}|^2dV_g\to\mu=|F_{A_{\infty}}|^2d\text{Vol}_g+\nu$, $\mu_{\lambda_l}\to \eta$ both in the weak-$*$ sense, $A_{x_0}$ is a tangent connection of $A_\infty$ at $x_0$; in other words, $x_0\in \mathcal{W}\mathcal{S}^k(A_{\infty})^c$. By the arbitrariness of $x_0$, we conclude that $\mathcal{W}\mathcal{S}^k(A_{\infty})\subseteq S^k(\mu)$. The rectifiability of $\mathcal{W}\mathcal{S}^k(A_{\infty})$ hence follows immediately from that of $S^k(\mu)$. Thus we complete the proof of Theorem \ref{coro1}.
\end{proof}

\section*{Appendix A: proof of Claim \ref{introproof}}\label{appa}
\begin{proof}
We may assume that $\mu$ is a tangent measure of a stationary connection $A$ at $p$. In other words, $|F_{A_l}|^2d\text{Vol}\to\eta$ in the weak-$\ast$ sense, where $A_l(x)\eqqcolon \lambda_l^{-1} A(\lambda_l(x-p))$. According to the discussions in the paragraph preceding Definition \ref{symdef1}, we may further assume that $\mu$ is a cone measure at the origin and set
$$\theta_{\mu}(y,r)= r^{4-n}\mu(B_r(y)).$$
Firstly, assume that $\mu$ is $k$-symmetric in the sense of Definition \ref{symdef1}. In other words, $\mu$ satisfies $T_{e_i}^*\mu=\mu$ for $i=1,\cdots,k$ where $\{e_i\}_{i=1}^n$ forms an orthonormal basis. As a matter of fact, $\theta_{\mu}(x,r)\eqqcolon r^{4-n}\mu(B_r(x))$ is constant in $r$ for all $x\in \text{span}\{e_i\}_{i=1}^k$. Apparently this implies that $\mu$ is $k$-symmetric in the sense of Definition \ref{keydefmu}. This completes the proof of one direction.

For the converse, let us assume that $\mu$ is $k$-symmetric in $B_1(0)$ with respect to some $k$-plane $V_k$ in the sense of Definition \ref{keydefmu}. Suppose that $V_k$ is spanned by an orthonormal basis $\{e_i\}_{i=1}^k$. Also denote by $z_1,\cdots,z_k$ the points such that $z_i=e_i/4$. From Remark \ref{keyremark} we have
$$\lim_{l\to \infty}\sum_{j=1}^k\int_{B_1(p)}|\iota_{\partial z_j}F_{A_{l}}|^2(x)dV_{\lambda_l^{-2}g}(x)=0,$$
$$\lim_{l\to \infty}\int_{A_{\sigma,\rho}(p)}|x-p|^{4-n}|\iota_{\partial r_p}F_{A_{l}}|^2(x)dV_{\lambda_l^{-2}g}(x)=0,\ \mbox{for all $\sigma,\ \rho\le 1/2$.}$$
Using this and \eqref{virtue}, we have the following for all $y\in B_{1/2}(0)\cap V_k$:
\begin{eqnarray}
\begin{split}
\lim_{l\to \infty}\int_{A_{\sigma,\rho}(y)}|x-y|^{4-n}|\iota_{\partial r_y}F_{A_{l}}|^2(x)dV_{\lambda_l^{-2}g}(x)=0,\ \mbox{for all $\sigma,\ \rho\le 1/2$.}
\end{split}
\end{eqnarray}
Hence for all $y\in B_{1/2}(0)\cap V_k$ and all $\sigma,\rho\le1/2$, we obtain:
\begin{eqnarray}
\begin{split}
\label{halfballrigid}
\theta_{\mu}(y,\rho)-\theta_{\mu}(y,\sigma)&=\rho^{4-n}\mu(B_{\rho}(y))-\sigma^{4-n}\mu(B_{\sigma}(y))\\
&=\lim_{l\to\infty}\int_{A_{\sigma,\rho}(y)}4|x-y|^{4-n}|\iota_{\partial r_y}F_{A_{l}}|^2dV_{\lambda_l^{-2}g}(x)\\
&=0.
\end{split}
\end{eqnarray}
Now we show that $\theta_{\mu}(q,r)$ is constant in $r$ for $q=0,z_1,\cdots,z_k$. Choose any $r>0$. By the fact that $\eta$ is a cone measure at $0$, we have the following for any $\epsilon>0$:
$$\theta_{\mu}(z_i,r)-\theta_{\mu}(z_i,1/2)=\theta_{\mu}(\epsilon z_i,\epsilon r)-\theta_{\mu}(\epsilon z_i,\epsilon/2).$$
Choose $\epsilon$ small enough to guarantee that $B_{\epsilon r}(\epsilon z_i)\subseteq B_{1/2}(p)$. Upon applying \eqref{halfballrigid}, we see that the right hand side of the above equation is $0$. Therefore, we have $\theta_{\mu}(z_i,r)-\theta_{\mu}(z_i,1/2)=0$. Hence $\theta_{\mu}(q,r)$ is constant in $r$ for $q=0,z_1\cdots,z_k$. By \eqref{mono}, we have:
$$\lim_{l\to\infty}\int_{A_{\sigma,\rho}(z_i)}|\iota_{\partial r_{z_i}}F_{A_{l}}|^2(x)dV_{\lambda_l^{-2}g}(x)=0,\ \mbox{for all $\sigma,\ \rho>0$}$$
To see that this implies $\mu$ is a cone measure at $0,z_1,\cdots,z_k$, we apply \eqref{psisph} for any $\sigma,\rho>0$ and any radial symmetric function $\psi$ at $q$ for $q=0,z_1,\cdots,z_k$ to obtain
\begin{eqnarray}
\begin{split}
\label{sph}
&\rho^{4-n}\int_{B_{\rho}(q)}\psi d\mu-\sigma^{4-n}\int_{B_{\sigma}(q)}\psi d\mu\\
=&\lim_{l\to\infty}\int_{A_{\sigma,\rho}(z_i)}4|x-z_i|^{4-n}\psi|\iota_{\partial r_{z_i}}F_{A_{l}}|^2dV_{\lambda_l^{-2}g}\\
&-\lim_{\lambda_l\to0}\int_{\sigma}^{\rho}4\tau^{3-n}\big{(}\int_{B_{\tau}(z_i)}|x-z_i|\langle \iota_{\partial r_{z_i}}F_{A_{l}},\iota_{\nabla\psi}F_{A_{l}} \rangle  dV_{\lambda_l^{-2}g}\big{)} d\tau\\
=&0.
\end{split}
\end{eqnarray}
Since $\psi$ is arbitrary, \eqref{sph} implies that $\mu$ is a cone measure at $0,z_1,\cdots,z_k$, and (if we denote $0$ by $z_0$) for $i=0,\cdots,k$ we have $d\mu=r_i^{n-5}dr_{z_i}\wedge d\xi_i(\theta)$, where $r_{z_i}(\cdot)=\text{dist}(z_i,\cdot)$ and $d\xi_i(\theta)$ is a Radon measure on the unit sphere $\{z\in T_pM: r_{z_i}(z)=1\}$. Clearly it follows that $\mu(z_1,\cdots, z_n)=\mu(z_{k+1},\cdots,z_n)$. In other words, $\mu$ is $k$-symmetric in the sense of Definition \ref{symdef1}. This completes the proof of the other direction, and hence concludes Claim \ref{introproof}. 
\end{proof}

\section*{Appendix B: proof of Theorem \ref{quandimred}}\label{appb}
\begin{proof}
The proof is by contradiction. Suppose the contrary holds. Then there exists an $\epsilon_0$, a sequence of stationary connections $A_i$, positive real number sequences $\delta_i\rightarrow 0$ and $r_i\rightarrow 0$ such that $A_i$ is $(k,\delta_i)$-symmetric in $B_1(x_i)$ with respect to $V_k^{(i)}$, and $y_i\in B_1(x_i)\backslash B_{\epsilon_0 }(V_k^{(i)})$ where no such $r^{*}>r_i$ exists that $A_i$ is $(k+1,\epsilon_0)$-symmetric in $B_{r^{*}}(y_i)$. Assume $V_k^{(i)}$ is $1/2$-effectively spanned by $0,\xi_{1}^{i},\cdots,\xi_{k}^{i}$.

Up to passing to a subsequence we may assume $x_i=0\in \mathbb{R}^n$, $y_i\rightarrow y_{\infty}$、 and $\xi_{\alpha}^{(i)}\to \xi_{\alpha}^{(\infty)}$ for each $\alpha=1,\cdots,k$; denote by $V_{k}^{\infty}$ the $k$-plane spanned by $0,\xi^{\infty}_1,\cdots,\xi^{\infty}_k$. For convenience, let us denote $0$ by $\xi^{i}_{0}$ for all $i$. Moreover, assume that $|F_{A_{i}}|^2dV_g$ converges to a measure $\mu$ in weak-$\ast$ sense. Thus, the fact that $A_i$ is $(k,\delta_i)$-symmetric in $B_1(0)$ with respect to $V_k^{(i)}$ implies that $\mu$ is $k$-symmetric in $B_1(0)$ with respect to $V_k^{\infty}$.

Let us choose $r_0$ sufficiently small to be determined later. On the one hand, by the monotonicity of $r^{4-n}\mu(B_r(y_{\infty}))$ and the pigeonhole principle, for any $r_0$ there exists $r^*$ with $r_0\ge r^*\ge r_0\cdot(\epsilon_0/10)^{10\Lambda/\epsilon_0}\equiv r_0\cdot r(n,\Lambda,\epsilon_0)$ such that
\begin{eqnarray}
\begin{split}
\label{model31}
|\theta_A(y_{\infty},10r^*)-\theta_A(y_{\infty},\epsilon_0 r^*/10)|<\epsilon_0/10.
\end{split}
\end{eqnarray}
On the other hand, let $V_k^{(y_{\infty})}$ be the $k$-plane that is parallel to $V_k^{(\infty)}$ and passing through $y_{\infty}$. We may find $k$ points $\{y_{\infty,l}\}_{l=1}^k\subseteq B_{r^*}(y_{\infty})\cap V_k^{(\infty)}$ such that together with $y_{\infty}$ they $\tau(n)r^*$-effectively span $V_k^{(\infty)}$ in $B_{r^*}(y_{\infty})$. By the $k$-symmetry of $\mu$ in $B_1(0)$, we have
\begin{eqnarray}
\begin{split}
\label{model32}
|\theta(y_{\infty,l},10r^*)-\theta(y_{\infty,l},\epsilon_0 r^*/10)|<\epsilon_0/10,\ l=0,\cdots,k,
\end{split}
\end{eqnarray}
where we denoted $y_{\infty}$ by $y_{\infty,0}$. Let $y_{\infty}^{\text{proj}}$ be the projection image of $y_{\infty}$ onto $V_k^{\infty}$. Denote by $y_{r^*}$ the intersecting point of $\partial B_{r^*}(y_{\infty})$ with the line passing through $y_{\infty}^{\text{proj}}$ and $y_{\infty}$, and set $d^*=\text{dist}(y_{\infty}^{\text{proj}},y_{\infty})$. Using the fact that $r^{4-n}\mu(B_r(y_{\infty}^{\text{proj}}))$ is constant in $r\in [0,1/2]$, we have
\begin{eqnarray}
\begin{split}
\label{model34}
t^{4-n}\mu(B_t(y_{r_*}))=s^{4-n}\mu(B_s(y_{\infty})), \text{where }\frac{s}{t}=\frac{d^*}{d^*+r^*}.
\end{split}
\end{eqnarray}
By \eqref{model31}, \eqref{model34}, the fact that $d^*\ge\epsilon_0$, and choosing a sufficiently small $r_0\le r_0(\epsilon_0)$, we obtain
\begin{eqnarray}
\begin{split}
\label{model33}
|\theta(y_{r^*},s)-\theta(y_{r^*},\epsilon_0s)|<\epsilon_0/10,\ \epsilon_0r^*/5\le s\le5r^*.
\end{split}
\end{eqnarray}
From \eqref{model32} and \eqref{model33}, we see that $y_{r^*},y_{\infty},y_{\infty,1},\cdots,y_{\infty,k}$ are $(\epsilon_0/2,r_*)$-cone points at $y_{\infty}$; moreover, they $2\tau(n)r_*$-effectively span a $(k+1)$-plane. By Definition \ref{keydefmu}, $\mu$ is $(k+1,\epsilon_0/2)$-symmetric on $B_{r^*}(y_{\infty})$. By the weak-$*$ convergence, $A_i$ is $(k+1,\epsilon_0)$-symmetric on $B_{r^*}(y_{i})$ for all $i$ sufficiently large. This gives a contradiction since $r^*>r_i$ for all $i$ sufficiently large. Thus we complete the proof of Theorem \ref{quandimred}. 
\end{proof}

\section*{Acknowledgements}
The author gratefully thanks his advisor A. Naber for interesting him in the problem and giving him constant support and tremendous encouragements.

%\clearpage

%\section{References}\ \ 

\bibliographystyle{plain}
\bibliography{quanref}

%\clearpage

\end{document}